%
%
%


\documentclass{amsproc}


\usepackage{amssymb, amsmath, amsthm, amsfonts}
\usepackage{mathrsfs,comment}
\usepackage{graphicx}
\usepackage{placeins}
\usepackage{scalefnt}
\usepackage{todonotes}
\usepackage{rotating}
\usepackage{tikz}
\usepackage{float}
\usepackage{subfigure}
\usepackage{float}
\usepackage{caption}
\usepackage{pdflscape}

\usepackage{hyperref}  
\usepackage{url}
\usepackage[all,arc,2cell]{xy}
\UseAllTwocells
\usepackage{enumerate}

\usepackage{chngcntr}
 \usepackage{lineno}
 \usepackage{blindtext}
\usepackage{verbatim}
\usepackage{soul}
\usepackage{sseq}

\usepackage[normalem]{ulem}
\newcommand{\stkout}[1]{\ifmmode\text{\sout{\ensuremath{#1}}}\else\sout{#1}\fi}

    \hypersetup{%
    bookmarksnumbered=true,%
    bookmarks=true,%
    colorlinks=true,%
    linkcolor=blue,%
    citecolor=blue,%
    filecolor=blue,%
    menucolor=blue,%
    pagecolor=blue,%
    urlcolor=blue,%
    pdfnewwindow=true,%
    pdfstartview=FitBH}

\def\@url#1{{\tt\def~{\lower3.5pt\hbox{\char'176}}\def\_{\char'137}#1}}

\let\fullref\autoref
\def\makeautorefname#1#2{\expandafter\def\csname#1autorefname\endcsname{#2}}
\makeautorefname{equation}{Equation}%
\makeautorefname{footnote}{footnote}%
\makeautorefname{item}{item}%
\makeautorefname{figure}{Figure}%
\makeautorefname{table}{Table}%
\makeautorefname{part}{Part}%
\makeautorefname{appendix}{Appendix}%
\makeautorefname{chapter}{Chapter}%
\makeautorefname{section}{Section}%
\makeautorefname{subsection}{Section}%
\makeautorefname{subsubsection}{Section}%
\makeautorefname{paragraph}{Paragraph}%
\makeautorefname{subparagraph}{Paragraph}%
\makeautorefname{theorem}{Theorem}%
\makeautorefname{theo}{Theorem}%
\makeautorefname{thm}{Theorem}%
\makeautorefname{addendum}{Addendum}%
\makeautorefname{addend}{Addendum}%
\makeautorefname{add}{Addendum}%
\makeautorefname{maintheorem}{Main theorem}%
\makeautorefname{mainthm}{Main theorem}%
\makeautorefname{corollary}{Corollary}%
\makeautorefname{claim}{Claim}%
\makeautorefname{corol}{Corollary}%
\makeautorefname{coro}{Corollary}%
\makeautorefname{cor}{Corollary}%
\makeautorefname{lemma}{Lemma}%
\makeautorefname{lemm}{Lemma}%
\makeautorefname{lem}{Lemma}%
\makeautorefname{sublemma}{Sublemma}%
\makeautorefname{sublem}{Sublemma}%
\makeautorefname{subl}{Sublemma}%
\makeautorefname{proposition}{Proposition}%
\makeautorefname{proposit}{Proposition}%
\makeautorefname{propos}{Proposition}%
\makeautorefname{propo}{Proposition}%
\makeautorefname{prop}{Proposition}%
\makeautorefname{property}{Property}
\makeautorefname{proper}{Property}
\makeautorefname{scholium}{Scholium}%
\makeautorefname{step}{Step}%
\makeautorefname{conjecture}{Conjecture}%
\makeautorefname{conject}{Conjecture}%
\makeautorefname{conj}{Conjecture}%
\makeautorefname{question}{Question}
\makeautorefname{questn}{Question}
\makeautorefname{quest}{Question}
\makeautorefname{ques}{Question}
\makeautorefname{qn}{Question}
\makeautorefname{definition}{Definition}%
\makeautorefname{defin}{Definition}%
\makeautorefname{defi}{Definition}%
\makeautorefname{def}{Definition}%
\makeautorefname{defn}{Definition}%
\makeautorefname{dfn}{Definition}%
\makeautorefname{note}{Note}
\makeautorefname{notation}{Notation}
\makeautorefname{nota}{Notation}
\makeautorefname{notn}{Notation}
\makeautorefname{remark}{Remark}%
\makeautorefname{rema}{Remark}%
\makeautorefname{rem}{Remark}%
\makeautorefname{rmk}{Remark}%
\makeautorefname{rk}{Remark}%
\makeautorefname{remarks}{Remarks}%
\makeautorefname{rems}{Remarks}%
\makeautorefname{rmks}{Remarks}%
\makeautorefname{rks}{Remarks}%
\makeautorefname{example}{Example}%
\makeautorefname{examp}{Example}%
\makeautorefname{exmp}{Example}%
\makeautorefname{exmps}{Examples}%
\makeautorefname{exam}{Example}%
\makeautorefname{exa}{Example}%
\makeautorefname{ex}{Example}%
\makeautorefname{algorithm}{Algorith}%
\makeautorefname{algo}{Algorith}%
\makeautorefname{alg}{Algorith}%
\makeautorefname{axiom}{Axiom}%
\makeautorefname{axi}{Axiom}%
\makeautorefname{ax}{Axiom}%
\makeautorefname{case}{Case}%
\makeautorefname{claim}{Claim}%
\makeautorefname{clm}{Claim}%
\makeautorefname{assumption}{Assumption}%
\makeautorefname{assumpt}{Assumption}%
\makeautorefname{conclusion}{Conclusion}%
\makeautorefname{concl}{Conclusion}%
\makeautorefname{conc}{Conclusion}%
\makeautorefname{condition}{Condition}%
\makeautorefname{condit}{Condition}%
\makeautorefname{cond}{Condition}%
\makeautorefname{construction}{Construction}%
\makeautorefname{construct}{Construction}%
\makeautorefname{const}{Construction}%
\makeautorefname{cons}{Construction}%
\makeautorefname{criterion}{Criterion}%
\makeautorefname{criter}{Criterion}%
\makeautorefname{crit}{Criterion}%
\makeautorefname{exercise}{Exercise}%
\makeautorefname{exer}{Exercise}%
\makeautorefname{exe}{Exercise}%
\makeautorefname{exc}{Exercise}%
\makeautorefname{problem}{Problem}%
\makeautorefname{problm}{Problem}%
\makeautorefname{probm}{Problem}%
\makeautorefname{prob}{Problem}%
\makeautorefname{solution}{Solution}%
\makeautorefname{soln}{Solution}%
\makeautorefname{sol}{Solution}%
\makeautorefname{summary}{Summary}%
\makeautorefname{summ}{Summary}%
\makeautorefname{sum}{Summary}%
\makeautorefname{operation}{Operation}%
\makeautorefname{oper}{Operation}%
\makeautorefname{observation}{Observation}%
\makeautorefname{observn}{Observation}%
\makeautorefname{obser}{Observation}%
\makeautorefname{obs}{Observation}%
\makeautorefname{ob}{Observation}%
\makeautorefname{convention}{Convention}%
\makeautorefname{convent}{Convention}%
\makeautorefname{conv}{Convention}%
\makeautorefname{cvn}{Convention}%
\makeautorefname{warning}{Warning}%
\makeautorefname{warn}{Warning}%
\makeautorefname{note}{Note}%
\makeautorefname{fact}{Fact}%
\makeautorefname{thmbig}{Theorem}%
\makeautorefname{conjbig}{Conjecture}%
\makeautorefname{analogy}{Analogy}%

  \makeatletter
                   \let\c@lemma\c@theorem
                  \makeatother

%


\newtheorem{thm}{Theorem}[subsection]
\newtheorem{cor}{Corollary}[subsection]
\newtheorem{prop}{Proposition}[subsection]
\newtheorem{lem}{Lemma}[subsection]

\theoremstyle{definition}
\newtheorem{exc}{Exercise}[subsection]
\newtheorem{defn}{Definition}[subsection]

\newtheorem{notation}{Notation}[subsection]

\newtheorem{rem}{Remark}[subsection]
\newtheorem{warn}{Warning}[subsection]

\newtheorem{assumption}{Assumption}[subsection]

\newtheorem{ex}{Example}[subsection]
\newtheorem{analogy}{Analogy}[subsection]

\makeatletter
\let\c@lem=\c@thm
\let\c@note=\c@thm
\let\c@analogy=\c@thm
\let\c@exc=\c@thm
\let\c@notation=\c@thm
\let\c@cor=\c@thm
\let\c@prop=\c@thm
\let\c@lem=\c@thm
\let\c@defn=\c@thm
\let\c@exmps=\c@thm
\let\c@rem=\c@thm
\let\c@warn=\c@thm
\let\c@claim=\c@thm
\let\c@quest=\c@thm
\let\c@exc=\c@thm
\let\c@ex=\c@thm
\let\c@assumption=\c@thm
\makeatother

\numberwithin{equation}{subsection}

\def\quickop#1{\expandafter\newcommand\csname #1\endcsname{\operatorname{#1}}}
\quickop{Hom}\quickop{RHom} \quickop{Maps} \quickop{End} \quickop{Aut} \quickop{Tel} \quickop{Mic} \quickop{map}
\quickop{Ext} \quickop{Tor} \quickop{Cotor} \quickop{Id} \quickop{Coker} \quickop{Ker}
\quickop{Lim} \quickop{Colim} \quickop{Holim} \quickop{Hocolim}
\quickop{id} \quickop{tel} \quickop{mic} \quickop{coker}
\quickop{colim} \quickop{holim} \quickop{hocolim} \quickop{im}

%
\numberwithin{equation}{subsection}


\newcommand{\A}{\mathcal{A}}
\newcommand{\cB}{\mathcal{B}}
\newcommand{\F}{\mathbb{F}}
\newcommand{\R}{\mathbb{R}}

\newcommand{\N}{\mathbb{N}}
\newcommand{\C}{\mathbb{C}}
\newcommand{\E}{\mathcal{E}}
\newcommand{\Q}{\mathbb{Q}}
\newcommand{\smsh}{\wedge}
\newcommand{\ra}{\rightarrow}
\newcommand{\xra}{\xrightarrow}

\newcommand{\ox}{\otimes}

\DeclareFontFamily{OMS}{rsfs}{\skewchar\font'60}
\DeclareFontShape{OMS}{rsfs}{m}{n}{<-5>rsfs5 <5-7>rsfs7 <7->rsfs10 }{}
\DeclareSymbolFont{rsfs}{OMS}{rsfs}{m}{n}
\DeclareSymbolFontAlphabet{\scr}{rsfs}

\newcommand{\Top}{\mathrm{Top}}
\newcommand{\HF}{{H}\mathbb{Z}/2}
\newcommand{\bHF}{\overline{{H}}\mathbb{Z}/2}
\newcommand{\mc}[1]{\mathcal{#1}}

\def\makeop#1{\expandafter\def\csname #1\endcsname{\mathop{\mathrm{#1}}\nolimits}}

\makeop{tmf}
\makeop{Ab}
\makeop{Gal}
\makeop{id}
\makeop{Mod}
\makeop{Tot}
\makeop{gr}
\makeop{Out}
\makeop{Hom}
\makeop{Ext}
\makeop{End}
\makeop{Aut}
\makeop{Tor}
\makeop{ev}
\makeop{sq}
\makeop{ko}
\makeop{ku}
\makeop{Vect}
\makeop{MSpin}
\makeop{MPin}
\makeop{Spin}
\makeop{Pin}
\makeop{Thom}
%


\def\Z{{{\mathbb{Z}}}}
\def\Ext{\mathrm{Ext}}

\newcommand{\U}{\overline{U}}
\newcommand{\w}{\overline{w}}

\def\RPn (#1,#2){
  \fill (#1, #2) circle (3pt);
  \fill (#1, #2+1) circle (3pt);
}

\def\sqtwoL (#1,#2,#3){
  \draw[#3] (#1,#2) .. controls (#1-1,#2+1) .. (#1,#2+2);
}

\def\sqtwoR (#1,#2,#3){
  \draw[#3] (#1,#2) .. controls (#1+1,#2+1) .. (#1,#2+2);
}

\def \sqtwoCR (#1,#2,#3){
   \draw[#3] (#1,#2) .. controls (#1+1,#2+.5) and (#1+1.5,#2+2) .. (#1+2,#2+2);
}

\def \sqtwoCL (#1,#2,#3){
   \draw[#3] (#1,#2) .. controls (#1-1,#2+.5) and (#1-1.5,#2+2)  .. (#1-2,#2+2);
}

\def \sqone (#1,#2,#3){
  \draw[#3] (#1,#2) -- (#1,#2+1);
}

\def\Aone (#1,#2){
\fill (#1, #2) circle (3pt);
\fill (#1, #2+1) circle (3pt);
\fill (#1, #2+2) circle (3pt);
\fill (#1, #2+3) circle (3pt);
\fill (#1+2, #2+3) circle (3pt);
\fill (#1+2, #2+4) circle (3pt);
\fill (#1+2, #2+5) circle (3pt);
\fill (#1+2, #2+6) circle (3pt);
\draw (#1, #2) -- (#1, #2+1);
\draw (#1, #2+2) -- (#1, #2+3);
\draw (#1+2, #2+3) -- (#1 + 2, #2+4);
\draw (#1+2, #2+5) -- (#1+2, #2+6);
\draw (#1, #2) .. controls (#1-1, #2+1) .. (#1, #2+2);
\draw (#1+2, #2+4) .. controls (#1+3, #2+5) .. (#1+2, #2+6);
\draw (#1, #2+1) .. controls (#1+1, #2+1.5) and  (#1+1.5 ,#2+3) .. (#1+2,#2+3);
\draw (#1, #2+2) .. controls (#1+1, #2+2.5) and (#1+1.5, #2+4) .. (#1+2, #2+4);
\draw (#1, #2+3) .. controls (#1+1, #2+3.5) and (#1+1.5, #2+5) .. (#1+2, #2+5);
}

\def\rectangle (#1,#2,#3){   \draw[#3] (#1-0.15,#2-0.15) rectangle (#1+0.15,#2+0.15)}

\def\Eone (#1,#2){
\fill (#1, #2) circle (3pt);
\fill (#1, #2+1) circle (3pt);
\fill (#1, #2+2) circle (3pt);
\fill (#1, #2+3) circle (3pt);
\draw (#1, #2) -- (#1, #2+1);
\draw (#1, #2+2) -- (#1, #2+3);
\draw (#1, #2) .. controls (#1-1, #2+1) .. (#1, #2+2);
\draw (#1, #2+1) .. controls (#1+1, #2+2) .. (#1, #2+3);
}


\def\joker (#1,#2){
  \foreach \y in {#2, #2+1, #2+2, #2+3, #2+4}
           {\fill (#1,\y) circle (3pt);}
           \draw (#1,#2) -- (#1, #2+1);
           \draw (#1,#2+3) -- (#1, #2+4);
           \draw (#1,#2+0) .. controls (#1-1,#2+1) .. (#1, #2+2);
           \draw (#1,#2+2) .. controls (#1-1,#2+3) .. (#1, #2+4);
           \draw (#1,#2+1) .. controls (#1+1,#2+2) .. (#1, #2+3);
}

\def\jokercolor (#1,#2, #3){
  \foreach \y in {#2, #2+1, #2+2, #2+3, #2+4}
           {\fill[#3] (#1,\y) circle (3pt);}
           \draw[#3] (#1,#2) -- (#1, #2+1);
           \draw[#3] (#1,#2+3) -- (#1, #2+4);
           \draw[#3] (#1,#2+0) .. controls (#1-1,#2+1) .. (#1, #2+2);
           \draw[#3] (#1,#2+2) .. controls (#1-1,#2+3) .. (#1, #2+4);
           \draw[#3] (#1,#2+1) .. controls (#1+1,#2+2) .. (#1, #2+3);
}

\def\msopart (#1,#2,#3){
    \fill[#3] (#1,#2) circle (3pt); 
      \fill[#3] (#1, #2+1) circle (3pt);
      \fill[#3] (#1, #2+2) circle (3pt);
      \fill[#3] (#1+2, #2+2) circle (3pt);
      \fill[#3] (#1+2, #2+3) circle (3pt);
      \fill[#3] (#1+2, #2+4) circle (3pt);
      \fill[#3] (#1+2, #2+5) circle (3pt);
    \sqtwoCR(#1,#2, #3);
    \sqtwoCR (#1, #2+1, #3);
    \sqtwoCR (#1, #2+2, #3);
    \sqone (#1+2, #2+2, #3);
    \sqone (#1+2, #2+4, #3);
    \sqtwoR(#1+2, #2+3, #3);
    \sqone (#1, #2+1, #3); }

    \def\amme (#1,#2,#3){
       \fill[#3] (#1,#2) circle (3pt) ;
   \sqtwoR(#1,#2,#3);
      \fill[#3] (#1,#2+2) circle (3pt) ;
         \sqone(#1,#2+2,#3);
      \fill[#3] (#1,#2+3) circle (3pt) ;
   \sqtwoR(#1,#2+3,#3);
   \fill[#3] (#1,#2+5) circle (3pt) ;}
   
    \def\questionupsidedon (#1,#2,#3){
       \fill[#3] (#1,#2) circle (3pt) ;
          \sqtwoR(#1,#2,#3);
                 \fill[#3] (#1,#2+2) circle (3pt) ;
             \sqone(#1,#2+2,#3);
                \fill[#3] (#1,#2+3) circle (3pt) ;
}

\definecolor{darkspringgreen}{rgb}{0.09, 0.45, 0.27}






%
%
%

\begin{document}

\title{A Guide for Computing Stable Homotopy Groups}



\author[A. Beaudry]{Agn\`es Beaudry}
\address{Department of Mathematics\\ University of Colorado at Boulder \\ \newline Campus Box 395 \\ Boulder \\ Colorado \\ 80309}

\author[J. Campbell]{Jonathan A. Campbell}
\address{Department of Mathematics \\ Vanderbilt University \\ \newline 1326 Stevenson Center \\ Nashville \\ Tennessee \\ 37240}


\date{\today}

\begin{abstract}
This paper contains an overview of background from stable homotopy theory used by Freed--Hopkins in their work on invertible extended topological field theories. We provide a working guide to the stable homotopy category, to the Steenrod algebra and to computations using the Adams spectral sequence. Many examples are worked out in detail to illustrate the techniques.
\end{abstract}

\maketitle

\setcounter{tocdepth}{1}
\tableofcontents


\section{Introduction and organization}

\subsection{Introduction} 
The main theorem of \cite{FH} states that deformation classes of reflection positive invertible $n$-dimensional extended topological field theories with symmetry group $H_n$ are classified by the torsion in
\[ [MTH, \Sigma^{n+1}I_{\Z}]  . \]
Here, $MTH$ is the Madsen--Tillmann spectrum associated to a group $H$ which is a stabilization of $H_n$, $I_{\Z}$ is the Anderson dual of the sphere spectrum, and $[-, -]$ denotes the \emph{stable} homotopy classes of maps. These concepts will be discussed in \fullref{sec:spectra}. 

In order to complete the classification problem, it is necessary to be able to compute stable homotopy classes of maps from a spectrum $X$ to $I_{\Z}$. This problem can be reduced to the computation of the stable homotopy groups of $X$ itself as will be described in \fullref{sec:andersondual}. 

In general, it is notoriously difficult, if not impossible, to completely compute the homotopy groups of a spectrum $X$. However, homotopy theorists are very good at doing these computations in small ranges and the problems motivated by physics only require information in small dimensions, making us a perfect match.

The main tool used to compute low-dimensional homotopy groups of spectra is the Adams spectral sequence. Adams initially introduced this spectral sequence in order to resolve the Hopf invariant one problem \cite{adams_cohomology, adams_hopf_inv_one}. It has been a standard tool in homotopy theory since then. In brief, the Adams spectral sequence takes in information about the cohomology of a space or spectrum and outputs information about its stable homotopy groups.

The Steenrod algebra, which we denote by $\A$, is one of the classical structures in homotopy theory. The mod-$2$ cohomology $H^\ast (X;\Z/2)$ of any space or spectrum $X$ is a module over $\A$. This module is the input to the Adams spectral sequence. Although it can be difficult to compute the $\A$-module structure of the cohomology of an arbitrary space or spectrum $X$, 
we work under the favorable circumstance that the examples we consider are related to the classifying spaces of various Lie groups. With the $\A$-module structure of $H^*(X;\Z/2)$ in hand, and some knowledge of homological algebra over the Steenrod algebra, the $E_2$-page of the Adams spectral sequence can be computed. In the low-dimensional range, we are lucky, and every example we consider is fully computable by hand. 

The aim of this paper is to introduce the reader to enough of the machinery of spectra, the Steenrod algebra and the Adams spectral sequence to understand the computation of the homotopy groups $\pi_\ast MTH$. To illustrate how one applies the theory, we do the computations for a few examples. In particular, we go over the cases when $H$ is $\Spin^c$ and $\Pin^c$ in detail, an exercise which was left to the reader in Section 10 of \cite{FH} and was not covered in \cite{campbell}.

\subsection{Organization}

In order to fully explicate the computations for readers unfamiliar with stable homotopy theory, we include an introduction to spectra in \fullref{sec:spectra}. Among other topics, we discuss the category of spectra and its homotopy category (\fullref{sec:catspectra} and \fullref{sec:homotopycat}), the homotopy groups of spectra (\fullref{sec:homotopygroups}), the Anderson dual (\fullref{sec:andersondual}) and the construction of Thom spectra (\fullref{sec:thomspectra}). These latter are integral to the Freed--Hopkins classification since it is Thom spectra that are tightly linked with cobordism groups and the cobordism hypothesis.

In \fullref{sec:steenrod} we discuss the Steenrod algebra, $\mc{A}$, which is a non-commutative, infinitely generated algebra that acts on the cohomology of all spaces and spectra. In \fullref{sec:ans}, we introduce $\A_1$, an eight dimensional sub-algebra of $\A$ that will play a crucial role in the computations.
In \fullref{sec:compA1} we compute the $\A_1$-module structure for some examples of cobordism spectra. This computation depends on knowing how to determine the $\A_1$-module structure of the cohomology of classifying spaces, along with the Thom isomorphism and the Wu formula. These things are discussed in \fullref{sec:SWclasses}. 

The Adams spectral sequence is introduced in \fullref{sec:ass}. The primary tool for computation with the Adams spectral sequence is homological algebra over $\A$ and, in our examples, over $\A_1$. This section includes a discussion of resolutions (\fullref{sec:MinRes}) and computations of $\Ext_{\A_1}$ for a menagerie of $\A_1$-modules. It includes explanations of Adams charts (\fullref{sec:achart}), of certain mulitplicative structures on $\Ext$ (\fullref{sec:multiplicative}) and a variety of useful tricks. In \fullref{sec:assconstruction}, we formally construct the spectral sequence and in \fullref{sec:usingASS}, we provide a ``user's manual''. 

In \fullref{sec:examples}, we come to the main event. In the range $0 \leq n \leq 4$ we compute $\pi_n MTH(s)$ and $\pi_n MTH^c(s)$ in all of the cases that were not explained in further detail in \cite{campbell}. The computations rely on the Adams spectral sequence, and we use all of the material developed in \fullref{sec:steenrod} and \fullref{sec:ass} to compute the $E_2$-pages. In such a small range, and in these cases, the spectral sequences collapse and the homotopy groups can be read off of the Adams charts.

\subsection*{Acknowledgements} The authors thank Andy Baker, Prasit Bhattacharya, Bert Guillou and the referee for helpful comments.


\section{A working guide to spectra}\label{sec:spectra}
In this section, we give an introduction to spectra. If one is interested in computing homotopy groups, then one can often get away with an understanding of the properties of the homotopy category of spectra (see \fullref{sec:homotopycat}). Some of the information we include is not strictly necessary for this understanding, but we tried to strike a balance between too little and too much information. 

For a more in-depth introduction to spectra, a starting point would be Section 1.4 of Lurie \cite{lurie} and the introduction by Elmendorff--Kriz--Mandell--May to Chapter 6 of \cite{athandbook} (a book that contains other hidden gems).
One could then move on to Part III of Adams \cite{adams}, Chapter 10.9 of Weibel \cite{weibel} and Chapter 12 of Aguilar--Gitler--Carlos \cite{aguilar}. For serious treatments of different modern models of the category of spectra together with all of its structure, see the first parts of Schwede \cite{schwede}, Mandell--May--Schwede--Shipley \cite{mmss}, Elmendorff--Kriz--Mandell--May \cite{ekmm} or Lurie \cite{lurie}. For the equivariant treatment, see Lewis--May--Steinberger \cite{lms}.

\begin{notation}
We let $\Ab$ be the category of graded abelian groups. We let $\Top_*$ be a category of suitably nice based topological spaces with continuous maps that preserve the base points. 
\end{notation}

Motivation for the category of spectra comes from at least two directions. First, there is Brown's representability theorem that states that a cohomology theory $E^* \colon \Top_*^{\text{op}}  \to \operatorname{Ab}$ has a sequence of representing spaces $E_n$. That is, $E^n(X) \cong [X, E_n]$. We will let $\Sigma(-)$ be the reduced suspension and $\Omega(-)$ be the based loops functor. The isomorphism $[\Sigma X, E_n] \cong [X, \Omega E_n]$ together with the suspension isomorphism $E^n(X)  \cong E^{n+1}(\Sigma X)$ give rise to an isomorphism
\[\xymatrix{ [X, E_n]  \ar[r]^-{\cong} &     [ X, \Omega E_{n+1}]}\]
which is natural in $X$. By the Yoneda Lemma, this corresponds to a weak equivalence $\omega_n \colon E_n \xra{\simeq} \Omega E_{n+1}$. 
Further, to discuss natural transformations between cohomology theories, one is led to discuss maps between these sequences of spaces. It thus behooves us to construct a category which consists of sequences of spaces.

Another motivation is via Freudenthal's suspension theorem. Let $X$ be a $k$-connected topological space. Freudenthal's suspension theorem states that the map $\pi_n (X) \to \pi_{n+1} (\Sigma X)$ is an isomorphism if $n \leq 2k$. For a fixed $n$ and connected $X$, this implies that $\pi_{n+k} (\Sigma^k X)$ stabilizes as $k$ goes to infinity. This motivates the definition of the \emph{$n$th stable homotopy group}
\[\pi_n^s X = \colim_k \pi_{n+k} \Sigma^k X \cong \pi_{n+m} (\Sigma^m X) \ \ \ m \gg 0.  \]
An amazingly useful fact is that $\pi_*^s \colon \Top_* \to \Ab$ is a homology theory, making the stable homotopy groups often (slightly) more computable than the usual, \emph{unstable}, homotopy groups. It is useful to consider the sequences of spaces $\{\Sigma^n X\}$ as the fundamental objects, and we come again to a point where it is necessary to define some category of sequences of spaces. 

\bigskip

We will define the following five categories in the next few sections:
\begin{enumerate}[(1)]
\item The category of prespectra, denoted $\mathrm{PreSp}$. See \fullref{defn:prespectra}.
\item The category of spectra, denoted $\mathrm{Sp}$. See \fullref{defn:spectra}.
\item The category of CW-prespectra, denoted $\mathrm{CWPreSp}$. See \fullref{defn:CWprespectra}.
\item The category of CW-spectra, denoted $\mathrm{CWSp}$. See \fullref{defn:CWspectra}.
\item The homotopy category of spectra, denoted $\mathrm{hSp}$. See \fullref{sec:homotopycat}.
\end{enumerate}
The first four are a means to the fifth. We justify this complication by the following analogy inspired from Chapter 10 of \cite{weibel} and, in particular, Analogy 10.9.7. The reader can skip this analogy now and come back to it at the end of \fullref{sec:homotopycat}.

\begin{analogy}\label{analogy}
To justify having both spectra and prespectra, we make an analogy with the categories of sheaves and presheaves. Although we do homological algebra in the category of sheaves, some constructions are easier to make in the category of presheaves. The forgetful functor from sheaves to presheaves has a left adjoint, the sheafification functor. This allows one to transport constructions from presheaves to sheaves.

In this part of the analogy, spectra are the sheaves and prespectra are the presheaves. The analogue of the sheafification functor is called spectrification and is denoted $L \colon \mathrm{PreSp} \to \mathrm{Sp}$. It is the left adjoint to a forgetful functor from $\mathrm{Sp}$ to $\mathrm{PreSp}$. See \fullref{rem:functorL}.

Now, switching gears, we think of the category $\mathcal{C}$ of bounded below chain complexes of $R$-modules. There are two important kinds of equivalences in this category, the chain homotopy equivalences and the quasi-isomorphisms. The derived category ${D}(\mathcal{C})$ is characterized as the initial category which receives a functor $\mathcal{C} \to {D}(\mathcal{C})$ such that the quasi-isomorphisms are mapped to isomorphisms in ${D}(\mathcal{C})$. Chain homotopy equivalence is an equivalence relation, but the property of being quasi-isomorphic is not. In theory, it takes more work to invert the quasi-isomorphisms than it does to invert the chain homotopy equivalences. However, a quasi-isomorphism between bounded below projective chain complexes is a chain homotopy equivalence
and, further, any chain complex is quasi-isomorphic to a projective one.
Therefore, a model for ${D}(\mathcal{C})$ is the category whose objects are projective chain complexes and morphisms are chain homotopy equivalences of maps. 

In this part of the analogy, the topological spaces are the $R$-modules and the category of spectra is the analogue of $\mathcal{C}$. The chain homotopy equivalences correspond to the homotopy equivalences and the quasi-isomorphisms to the weak homotopy equivalences. The homotopy category of spectra is analogous to ${D}(\mathcal{C})$. The projective chain complexes are the analogues to CW-spectra, and a model for the homotopy category of spectra is the category of CW-spectra together with homotopy classes of maps between them. 

We have not mentioned CW-prespectra and use it to tie the knot between the two analogies: CW-prespectra are easy to define in prespectra, and the spectrification functor is used to transfer the definition to spectra.
\end{analogy}

\subsection{The categories}\label{sec:catspectra}

\begin{defn}[Prespectra]\label{defn:prespectra}
A \emph{prespectrum} $X$ is a sequence of spaces $ X_n \in \Top_*$ for $n\geq 0$ and continuous maps $\sigma_n \colon \Sigma X_n \to X_{n+1}$. We let $\omega_n \colon X_n \to \Omega X_{n+1}$ be the adjoint of $\sigma_n$ and note that giving the structure maps $\sigma_n$ of a prespectrum is equivalent to specifying the maps $\omega_n$. 
A map of prespectra $f \colon X \to Y$ of degree $r$ is a sequence of continuous, based maps $f_n \colon X_n \to Y_{n-r}$ such that the following diagram commutes:
\[\xymatrix{ \Sigma X_n \ar[r]^-{f_n} \ar[d]_{\sigma_n} &  \Sigma  Y_{n-r}  \ar[d]^{\sigma_{n-r}}  \\ 
X_{n+1} \ar[r]_-{f_{n+1}} & Y_{n+1-r}.}\]
We let $\mathrm{PreSp}$ denote the category of prespectra. (The plural of prespectrum is \emph{prespectra}.)
\end{defn}

\begin{rem}
If the maps $\omega_n$ are weak homotopy equivalences, then $X$ is often called an \emph{$\Omega$-prespectrum}.
\end{rem}

\begin{defn}[Spectra]\label{defn:spectra}
A prespectrum is called a \emph{spectrum} if the maps $\omega_n$ are homeomorphisms. We let $\mathrm{Sp}$ denote the full subcategory of prespectra generated by the objects which are spectra.
\end{defn}

\begin{defn}[CW-prespectra]\label{defn:CWprespectra}
We call a prespectrum a \emph{CW-prespectrum} if the spaces $X_n$ are CW-complexes and the maps $\Sigma X_n \to X_{n+1}$ are cellular inclusions. We let $\mathrm{CWPreSp}$ denote the full subcategory of prespectra generated by the objects which are CW-prespectra.
\end{defn}

\begin{ex}
  The standard example is the suspension prespectrum $\Sigma^{\infty}A$ of a based topological space $A$. Its $n$th space is given by $\Sigma^n A$ and the structure maps are identities $\Sigma \Sigma^n A \cong \Sigma^{n+1}A \to  \Sigma^{n+1}A$. In fact, this extends to a functor $\Sigma^{\infty}\colon \Top_* \to \mathrm{PreSp}$ which sends a space $A$ to $\Sigma^{\infty}A$. The functor $\Sigma^{\infty}$ is left adjoint to the functor  $\Omega^{\infty} \colon \mathrm{PreSp} \to \Top_*$ which sends a prespectrum to its zeroth space.
    \end{ex}

\begin{ex}
The Eilenberg-MacLane prespectrum $HG$, where $G$ is an abelian group, has $n$th space $K(G,n)$. The structure maps of $HG$ are the adjoints to the homotopy equivalences $\omega_n  \colon K(G,n) \to \Omega K(G,n+1)$. A homomorphism of abelian groups $G_1 \to G_2$ give rise to a map of prespectra $HG_1 \to HG_2$.
  \end{ex}

\begin{ex}
Another example is given by $K$-theory. The odd spaces of $K$ are the infinite unitary group $U$ and the even spaces are $\Z \times BU$, where $BU$ is the classifying space of $U$. The structure maps $\omega_n$ are the equivalences given by Bott Periodicity. Similarly, real $K$-theory is denoted by $KO$. Its spaces repeat with period eight starting with $\Z \times BO$, where $BO$ is the classifying space of the infinite orthogonal group $O$.
    \end{ex}
    
  \begin{ex}
If $X=X_0$ is an infinite loop space so that there exists spaces $X_k$ so that $X \simeq \Omega^k X_k $ for all $k\geq 0$, then the $X_k$ assemble into a prespectrum.
    \end{ex}

\begin{defn}\label{def:functorL}\label{rem:functorL}
The \emph{spectrification functor} $L \colon \mathrm{PreSp} \to \mathrm{Sp}$ is the left adjoint to the forgetful functor $U \colon \mathrm{Sp} \to \mathrm{PreSp}$.
\end{defn}

\begin{rem}
The functor $L$ exists by Freyd's adjoint functor theorem. It can be constructed easily if the maps $\omega_n$ are inclusions (for example, if $X$ is a CW-prespectrum). In this case, $LX$ is the spectrum whose $k$th space is 
\[ LX_k = \colim_n \Omega^{n+k}X_n,\]
where the colimit is taken over $ \Omega^{n+k}(\omega_n) \colon \Omega^{n+k}X_n \to  \Omega^{n+k+1}X_{n+1}$. If $X$ is already a spectrum, then $LX \cong X$ as these maps are all homeomorphisms. For a general definition, we refer the reader to Appendix A.1 of \cite{lms}. 
\end{rem}
\begin{warn}
We abuse notation and write $ULX$ simply as $LX$. Further, we often omit the $L$ if we are not emphasizing the replacement. For example, we
write $\Sigma^{\infty}A = L( \Sigma^{\infty}A  )$, $HG =L(HG)$, etc..
\end{warn}

\begin{defn}[CW-Spectra]\label{defn:CWspectra}
The category of \emph{CW-spectra}, denoted $\mathrm{CWSp}$, is the full subcategory of spectra generated by the image of the restriction of $L$ to CW-prespectra. That is, $X \in \mathrm{CWSp}$ if it is of the form $LY$ for some $Y \in \mathrm{CWPreSp}$.
\end{defn}

We summarize the discussion by the following diagram of adjunctions, where $\Omega^{\infty} \colon \mathrm{Sp} \to \Top_*$ is also the zeroth space functor:
\[\xymatrix{ \Top_* \ar@<.5ex>[rr]^-{\Sigma^{\infty}}  \ar@/^2pc/[rrrr]^{\Sigma^{\infty}} & & \mathrm{PreSp}  \ar@<.5ex>[ll]^-{\Omega^{\infty}}  \ar@<.5ex>[rr]^-{L} & &\mathrm{Sp} \ar@<.5ex>[ll]^-U 
\ar@/^2pc/[llll]^-{\Omega^{\infty}}} \]

The coproduct in $\Top_*$ is the wedge $A \vee B$. The category $\Top_*$ is a closed symmetric monoidal category, where the hom objects are the spaces of continuous based maps $\Maps(A, B)$ and the symmetric monoidal product is the smash product $A \smsh B$. There is an associated homeomorphism
\[ \Maps(A \smsh B, C) \cong \Maps(A, \Maps(B, C)).\]
We briefly discuss related constructions in (pre)spectra.

For $X$ a prespectrum and $A$ a based topological space, we let
$X \smsh A$ be the prespectrum whose spaces are given by $X_n \smsh A$ and structure maps by $\sigma_n \smsh \id_A$. We define $\Sigma^r X = X \smsh S^r$ with $\Sigma =\Sigma^1$. 

Similarly, we let $F(A, X)$ be the prespectrum whose $n$th space is $\Maps(A,X_n)$ and whose structure maps are given $f \mapsto \omega_n \circ f$, using the identification \[\Omega \Maps(A, X_{n+1}) \cong  \Maps(A, \Omega X_{n+1}) .\] We let $ \Omega(X) = F(S^1, X)$. In the homotopy category (defined in \fullref{sec:homotopycat}), the functors $\Omega(-)$ and $\Sigma(-)$ become inverses, so we let $\Sigma^{-1}(-)=\Omega(-)$.

In prespectra, the coproduct is also a wedge construction. The spaces of $X \vee Y$ are $X_n \vee Y_n$ with structure maps $\sigma_n \vee \sigma_n$, using the fact that $\Sigma(X_n \vee Y_n) \cong \Sigma X_n \vee \Sigma Y_n$. 

These constructions transfer to spectra via the spectrification functor $L$, and we abuse notation by dropping the $L$ from the notation. For example, we write $\Sigma X= L(\Sigma X) $.

\begin{rem}\label{rem:smashfun}
Smash products of spectra and function spectra are harder to construct, and we will not do this here. We do note however that there are versions of the category of spectra which are closed symmetric monoidal with respect to an appropriate smash product. The first such construction is due to Elmendorf--Kriz--Mandell--May \cite{ekmm}. However, up to homotopy (see the definition of homotopy in these categories below), the smash product $X \smsh Y$ was constructed directly two decades prior. This is called Boardman's \emph{handicrafted} smash product and the construction is described in \cite{adams}. One can also construct a function spectrum $F(X,Y)$ so that $F(X,F(Y,Z)) \simeq F(X\smsh Y,Z)$. We will only use these constructions up to homotopy and we take them for granted.
\end{rem}

\subsection{Homotopies and homotopy groups}\label{sec:homotopygroups}
Let $I_+$ be the unit interval $[0,1]$ with a disjoint basepoint. Then the prespectrum $X\smsh I_+$ admits a map 
\[X \vee X \xra{i_0 \vee i_1} X\smsh I_+\]
defined levelwise on each factor by the inclusions at $0$ and $1$ respectively.  As in $\Top_*$, we can use the prespectrum $X\smsh I_+$ to define homotopies between maps. 

Two maps of prespectra $f,g \colon X \to Y$ are \emph{homotopic}, denoted $f \simeq g$, if there is a map $H \colon X\smsh I_+ \to Y $ which restricts to $f \vee g$ along the inclusion 
\[X \vee X  \xra{i_0 \vee i_1}  X\smsh I_+ \xra{H} Y.\] 
Maps of spectra are homotopic if they are homotopic as maps of prespectra. We will let the set of homotopy classes of maps between two (pre)spectra $X$ and $Y$ be denoted by $\{X,Y\}$. If $Y$ is an $\Omega$-prespectrum, this is in fact an abelian group. Similarly, homotopy classes of maps of degree $r$ are denoted by $\{X,Y\}_r$. Two (pre)spectra $X$ and $Y$ are \emph{homotopy equivalent} if there are maps $f \colon X \to Y$ and $g \colon Y \to X$ such that $f\circ g \simeq \id_{Y}$ and $g \circ f \simeq \id_{X}$.

\begin{defn}
Let $X$ be a (pre)spectrum and $n\in \Z$. The \emph{$n$th homotopy group} of $X$ is
\[\pi_n X = \colim_{k} \pi_{n+k}X_k\]
where the maps in the colimit take an element $S^{n+k} \to X_k$ to the composite $S^{n+k+1} \to \Sigma X_k \xra{\sigma_{k}} X_{k+1}$. 
A map of (pre)spectra is a \emph{weak homotopy equivalence} if it induces an isomorphism on homotopy groups. 
\end{defn}

\begin{rem}
The unit of the adjunction $X \to LX$ is a functorial replacement of $X$ by the weakly homotopy equivalent spectrum $LX$.
\end{rem}

\begin{rem}[Whitehead's theorem]
A map of CW-spectra which is a weak homotopy equivalence is also a homotopy equivalence.
\end{rem}

\subsection{The homotopy category of spectra and its triangulation}\label{sec:homotopycat}
First, we recall the analogous object for $\Top_*$. The homotopy category of based topological spaces $\mathrm{hTop}_*$ is the initial category receiving a functor from $\Top_*$ which sends weak homotopy equivalences to isomorphisms. Using Whitehead's theorem and CW-approximation, one model for $\mathrm{hTop}_*$ has objects the pointed CW-complexes and morphisms the based homotopy classes of maps between them. The map $\Top_* \to \mathrm{hTop}_*$ sends $A$ to a CW-approximation $\Gamma A$, which is functorial up to homotopy, and a map $f$ to the homotopy equivalence class of $\Gamma f$.

There are many constructions of the homotopy category of spectra, which we denote by $\mathrm{hSp}$, including through the theory of $\infty$-categories. These all give equivalent categories and $\mathrm{hSp}$ is one of the modern settings for homotopy theory. In this section, we give some of the standard tools to work in $\mathrm{hSp}$. 

The homotopy category $\mathrm{hSp}$ is initial among categories that admit a functor out of $\mathrm{(Pre)Sp}$ which sends the weak homotopy equivalences to isomorphisms. In particular, any functor $\mathrm{(Pre)Sp}  \to \mathcal{D} $ with this property factors through the functor $\mathrm{(Pre)Sp} \to \mathrm{hSp} $:
\[\xymatrix{ \mathrm{PreSp} \ar[dr] \ar[r]^-{L} & \mathrm{Sp} \ar[d] \ar[r] & \mathrm{hSp} \ar@{-->}[dl] \\
& \mathcal{D} &  }\]

The objects of $\mathrm{hSp}$ are simply called \emph{spectra}. The category $\mathrm{hSp}$ is a triangulated category with shift operator given by the suspension $\Sigma(-)$, which in $\mathrm{hSp}$ becomes inverse to $\Sigma^{-1}(-) = \Omega(-)$. We define
\[[X,Y]_r := \mathrm{hSp}_r(X,Y) = \mathrm{hSp}(\Sigma^r  X, Y) \]
and let $[X,Y] = [X,Y]_0$. These are abelian groups for all $X$, $Y$ and $r$. The isomorphisms in $\mathrm{hSp}$ are denoted by $\simeq$ because of their relationship to the weak homotopy equivalences. 

\begin{rem} 
The morphisms in $\mathrm{hSp}$ must be computed with care, and we remind the reader of \fullref{analogy}. With this analogy in mind, note that $[X,Y]$ is not in general isomorphic to $\{X,Y\}$. Here, $\{X,Y\}$ denotes the homotopy classes of maps as defined in \fullref{sec:homotopygroups}. This is the essence of the ``cells now --- maps later'' discussion on p.142 of \cite{adams}.
\end{rem}

\begin{rem}[CW-approximation]\label{rem:CWapprox}
For any prespectrum $X$, there is a CW-spectrum $\Gamma X$ connected to $X$ by a zig-zag of weak homotopy equivalences. The construction is functorial up to homotopy.
\end{rem}

\begin{rem}
We use CW-approximation to describe models for $ \mathrm{hSp}$. The first has objects CW-spectra and morphisms homotopy classes of maps between them. In particular, if $X$ and $Y$ are CW-spectra, then
$[X,Y]\cong \{X,Y\}$. The functor $ \mathrm{(Pre)Sp} \to \mathrm{hSp}$ sends $X$ to $\Gamma X$ and a map $f$ to the homotopy equivalence class of $\Gamma f$.  A slightly larger model is to let the objects be CW-prespectra and morphisms
$[X,Y] \cong \{LX, LY\} \cong  \{X, LY\} $.
One can also take objects to be all prespectra and morphisms to be $[X,Y] \cong \{\Gamma X, \Gamma Y\}$.

The point we want to stress here is that, for any two $X$ and $Y$, whether they be prespectra, spectra, CW-prespectra or CW-spectra, it makes sense to write down $[X,Y]$. Every point of view yields isomorphic abelian groups. In $\mathrm{hSp}$, we forget the distinctions: All objects have equal dignity and are called \emph{spectra}.
\end{rem}

We extend $\Sigma^{\infty}$ to a functor $\Sigma^{\infty} \colon \Top_* \to \mathrm{hSp}$ by sending $A$ to the image of $\Sigma^{\infty}A \in  \mathrm{hSp}$. 
We often simply write $A$ to denote $\Sigma^{\infty}A \in  \mathrm{hSp}$. For example, $S^t$ as a spectrum is 
\[S^t \simeq \Sigma^{\infty} S^t \simeq \Sigma^t  \Sigma^{\infty} S^0 \simeq \Sigma^t  S^0. \]
The \emph{sphere spectrum} is the spectrum $S^0$. On the other hand, $\Omega^{\infty}$ induces a functor $\Omega^{\infty} \colon \mathrm{hSp} \to \mathrm{hTop}_*$.

We let $F(X,Y)$ and $X \smsh Y$ be the function spectrum and smash product in $\mathrm{hSp}$. See \fullref{rem:smashfun}. The category $\mathrm{hSp}$ is a closed symmetric monoidal category so that
\begin{align}\label{eq:adjunctionF}
 F(X \smsh Y, Z ) \simeq   F(X ,F(Y, Z) ). \end{align}
The sphere spectrum $S^0$ is the unit for the symmetric monoidal structure and
\begin{align*}
S^0 \smsh X &\simeq X,  &  F(S^0,X)&\simeq X. \end{align*}
If is $X$ a spectrum and $A$ is a based topological space, for the constructions described in \fullref{sec:catspectra}, we have $A \smsh X \simeq (\Sigma^{\infty}A) \smsh X$ and $F(A,X) \simeq F(\Sigma^{\infty}A, X)$.

There is an identity
\[[X,Y]_t = \pi_tF(X,Y).\]
In particular, if $\pi_*X$ denotes the homotopy groups of $X$,
\[\pi_tX \cong [S^t, X] \cong \pi_{0}F(S^t, X) \cong  \pi_t F(S^0, X) .\]

The category $\mathrm{hSp}$ has arbitary products and coproducts. Further, for a collection of objects $X_{\alpha}$, $\alpha\in I$ with the property that, for every $k \in \Z$, $\pi_kX_\alpha =0$ for all but finitely many $\alpha \in I$, the map
\begin{equation}\label{eq:prodcoprod}
\xymatrix{\bigvee_{\alpha \in I} X_\alpha \ar[r]^-{\simeq} & \prod_{\alpha \in I} X_\alpha}  \end{equation}
is an isomorphism.

Pushout and pullback diagrams also coincide in $\mathrm{hSp}$.
The exact triangles 
\[X \to Y \to Z \to \Sigma X\]
are equivalently called \emph{cofiber} and \emph{fiber} sequences. The spectrum $Z$ is called the \emph{cofiber} of $X \to Y$, while $X$ is called the \emph{fiber} of $Y \to Z$. A map $X \to Y$ is null homotopic if and only if $Z \simeq Y \vee \Sigma X$.

A standard example of an exact triangle in $\mathrm{hSp}$ is constructed by killing an element in homotopy. For example, if $\alpha \colon S^n \to S^m$ is an element of $\pi_n S^m$, then $C(\alpha)$ is defined by the exact triangle
\[S^m \xra{\alpha} S^n \to C(\alpha) \to S^{m+1}.\]

If $X \to Y \to Z \to \Sigma X$ is an exact triangle, then so are the four term sequences obtained by applying $W \smsh (-)$, $F(W,-)$ or $F(-,W)$. Further, applying either of $[-,X]$ and $[X,-]$ to an exact triangle gives rise to a long exact sequence of abelian groups. In particular, there are long exact sequences on homotopy groups $\pi_*(-)$. 

A useful fact about the functor $\Sigma^{\infty}$ is that it commutes with $\smsh$ and with $\vee$. Also, applying $\Sigma^{\infty}$ to a homotopy cofiber sequence $A \to B \to C$ of spaces gives an exact triangle in $ \mathrm{hSp}$. In particular, the cofiber sequence $A \vee B \to A \times B \to A \smsh B$
gives rise to a split cofiber sequence of spectra so that
\[\Sigma^{\infty}(A \times B) \simeq \Sigma^{\infty}A \vee \Sigma^{\infty} B \vee \Sigma^{\infty}(A \smsh B).\]

\begin{warn}
From this point onwards, when we say ``spectrum'', we mean an element of $\mathrm{hSp}$ unless otherwise specified.
\end{warn}

\subsection{Cohomology and Homology Theories.}
A generalized homology theory is a collection of functors $E_n \colon \mathrm{hSp} \to \Ab$ indexed by $\Z$, together with natural isomorphisms $E_{n+1}(\Sigma -) \xra{\cong} E_{n}( -)$ such that $E_n$ takes arbitrary coproducts to direct sums and exact triangles to exact sequences. A generalized cohomology theory is a collection of contravariant functors $E^n \colon \mathrm{hSp}^{\mathrm{op}} \to \Ab$ indexed by $\Z$ and natural isomorphisms $E^{n}( -) \xra{\cong} E^{n+1}(\Sigma -)$ such that $E^n$ that takes arbitrary coproducts to direct products and exact triangles to exact sequences. We refer the reader to Whitehead \cite[Section 5]{whitehead} for more on generalized homology and cohomology theories.

Any spectrum in $\mathrm{hSp}$ gives rise to generalized homology and cohomology theories
$E_* \colon \mathrm{hSp} \to \Ab$ and $E^* \colon \mathrm{hSp}^{\mathrm{op}} \to \Ab$. Further, by precomposing with $\Sigma^{\infty} \colon \Top_* \to \mathrm{hSp}$, we obtain (reduced) theories defined on topological spaces.
If $E \in \mathrm{hSp}$,
\[ E^n(X) = [X, E]_{-n}\cong \pi_{-n}F(X,E) \cong [X, \Sigma^n E] \]
and 
\[ E_n(X) = \pi_n( E \smsh X).\]
Conversely, the Brown representability theorem implies that any homology or cohomology theory is represented by a spectrum $E = \{E_n\}$ so that $E^n(X) = [X, E_n]$. 

\begin{rem}
If $E \in \mathrm{PreSp}$ is a prespectrum, and $A$ is a topological space, 
\[ E^n(A) \cong [A , (LE)_n]  \]
where the right hand side denotes homotopy classes of maps in $\Top_*$. In particular, if $E \in \mathrm{Sp}$, then $E^n(A) \cong [A,E_n]$. In fact, for this to hold, it is enough that the structure maps $\omega_n$ be weak homotopy equivalences (i.e., that $E$ be an $\Omega$-prespectrum).

If $E \in \mathrm{CWPreSp}$ is such that $E_n$ is $n-1$-connected, then
\[E_n(A) \cong \pi_n(E \smsh A) \cong  \colim_{k} \pi_{n+k} (E_k \smsh A). \]
\end{rem}

\begin{ex}
If $E = HG$, the Eilenberg--MacLane spectrum for an abelian group $G$ and $A$ is a based space, or $B_+$ is an unbased space with a disjoint base point, then
\begin{align*}
HG^*(A) &= \widetilde{H}^*(A ; G)  &  HG^*(B_+) &= {H}^*(B ; G).
\end{align*}
Further, by definition, $\widetilde{H}^*(A;G) \cong HG^*(\Sigma^{\infty}A)$.
\end{ex}

\subsection{Connective spectra}\label{sec:connective}
Let $A \in \mathrm{Top}_*$ be a connected CW-complex. For every $m\geq 0$, there is a space $A_{\tau \geq m} $ with the property that $\pi_nA_{\tau \geq m}=0$ if $n<m$, together with a map $A_{\tau \geq m} \to A$ which is an isomorphism  on $\pi_n$ if $n \geq m$. 
The space $A_{\tau \geq m}$ is called the \emph{$m$th connective cover of $A$}, and is obtained as the $m$th stage of the Whitehead tower. This can be done functorially and the spaces $A_{\tau \geq m}$ are unique up to canonical isomorphism in $\mathrm{hTop}_*$.

 Note that the homotopy groups of spectra are defined for any integer $n \in\Z$. In particular, some spectra have negative homotopy groups. If $X \in \mathrm{hSp}$ and $m \in \Z$, the \emph{$m$th connective cover} of $X$  is a spectrum $X_{\tau \geq m} $ with the property that $\pi_nX_{\tau \geq m} =0$ for $n<m$, together with a map $X_{\tau \geq m} \to X$ which is an isomorphism on $\pi_n$ if $n\geq m$. If $X \in \mathrm{hSp}$ is represented by a prespectrum with spaces $X_n$, then $X_{\tau \geq m}$ is represented by a prespectrum whose spaces are $(X_{\tau \geq m})_n  =  (X_n)_{\tau \geq m+n}$ and whose structure maps are obtained from those of $X$ using the functoriality and uniqueness.
The spectrum $X_{\tau \geq 0} $ is called the \emph{connective cover} of $X$. 
\begin{notation}
The spectrum $\ku$ denotes the connective cover of the $K$-theory spectrum $K$. The spectrum $\ko$ denotes the connective cover of the real $K$-theory spectrum $KO$.
\end{notation}

\subsection{Multiplicative Homology Theories.} One of the main reasons for introducing a symmetric monoidal products on the category of spectra $\mathrm{Sp}$ or on its homotopy category $\mathrm{hSp}$ is the discussion of \textit{ring spectra}. The cohomology theory that one first encounters, singular cohomology, has the structure of a graded ring. By the Brown representability theorem, this gives rise to maps $H \Z \smsh H \Z \to H\Z$ for the Eilenberg--MacLane spectrum $H\Z$. Many cohomology theories come equipped with this structure; for example, $K$ and $KO$-theory and nearly all cobordism theories. 

We give some definitions in $\mathrm{hSp}$. A \emph{ring spectrum} is a spectrum $R \in \mathrm{hSp}$ together with a multiplication map $\mu \colon R \smsh R \to R$ and a unit map $\eta \colon S^0 \to R$ such that the diagram
\[
\xymatrix{
  S^0 \smsh R\ar[dr]_-{\simeq} \ar[r]^-{\eta \smsh \id_R} & R \smsh R \ar[d]^\mu &  R \smsh S\ar[dl]^-{\simeq} \ar[l]_-{\id_R \smsh \eta}\\
   & R& 
}
\]
commutes (in $\mathrm{hSp}$). Granted a notion of ring spectrum, we can define commutative ring spectra, and module spectra. A commutative ring spectrum is one such that the diagram
\[
\xymatrix{
  R \smsh R \ar[dr]_-{\mu}\ar[rr]^-{\text{tw}} & &   R \smsh R \ar[dl]^-{\mu} \\
  & R 
}
\]
commutes, where $\operatorname{tw}$ is the map that exchanges the two copies of $R$. For $R$ a ring spectrum, an $R$-module spectrum is a spectrum $M$ together with a map $R \smsh M \to M$ which fits into the commutative diagrams that categorify the notion of a module over a ring.

Much of the intuition from homological algebra can be carried over to the context of ring spectra and module spectra. For example, one can define resolutions in this context. The homotopy groups of a resolution will reflect properties of the homotopy groups of the spectrum it resolves. See, for example, \cite{millerrelations}. This is one of the ideas in the construction of the Adams spectral sequence. See \fullref{sec:assconstruction}.

A construction from algebra that requires more care with the smash product when being adapted to spectra is the notion of quotient modules. This is solved in the modern categories of spectra, but is not needed here.

\subsection{Spanier--Whitehead duality}
The functional dual of a spectrum $X$ is the function spectrum $F(X, S^0)$. This is often denoted by $DX$ in analogy with Spanier--Whitehead duality. If $X \simeq \Sigma^{\infty}A$ for a finite CW-complex $A$, then $DX$ is the classical Spanier--Whitehead dual of $A$.

The enriched adjunction \eqref{eq:adjunctionF} gives rise to certain important maps. First, there are the units and the counits which are ``coevaluations'' and ``evaluations'' respectively:
\begin{align*}
coev &\colon  Y \to F(X, X\smsh Y)    & ev &\colon  X \smsh F(X,Y) \to Y
\end{align*}
Using the adjunction \eqref{eq:adjunctionF} and $ev$, for any spectra $X$, $Y$ and $Z$, the adjoint to the $X \smsh F(X,Y) \smsh Z \xra{ev \smsh Z} Y \smsh Z$ gives a map
\begin{equation}\label{eq:dualstruc}
 F(X, Y)\smsh Z \to F(X, Y\smsh Z)
 \end{equation}
which may or may not be an isomorphism in $\mathrm{hSp}$. 

The spectrum $Z$ is called \emph{dualizable} if this is an isomorphism in $\mathrm{hSp}$ for all spectra $X$ and $Y$. Examples of dualizable spectra are the spheres $S^t = \Sigma^tS^0$ and, more generally, the suspension spectrum $\Sigma^{\infty}A$ of any finite CW-complex $A$. Finally, to verify that $Z$ is dualizable, it is enough to check that
\[ DZ \smsh Z \simeq F(Z, S^0)\smsh Z \to F(Z,  Z)\]
is a weak equivalence.

\subsection{Brown-Comenetz and Anderson duality}\label{sec:andersondual}
For any injective abelian group $A$, the functor from $\Top_*$ to abelian groups given by
\[ I_A^n(X) = \Hom_{\Z}(\pi_{n}( \Sigma^{\infty}X) ,A)\]
defines a cohomology theory, which is represented by a spectrum denoted $I_{A}$. For example, if $A=\Q$, then 
\[I_{\Q}^n(X) \cong \widetilde{H}^{n}(X ; \Q) ,\] 
and $I_{\Q}$ is equivalent to $H\Q$.

Since $\Q/\Z$ is an injective abelian group, we also obtain a spectrum $I_{\Q/\Z}$, which is often called the \emph{Brown-Comenetz spectrum}. The natural map $\Q \to \Q/\Z$ together with the Yoneda Lemma gives rise to a map of spectra $I_{\Q} \to I_{\Q/\Z}$. Then $I_{\Z}$ is defined by the exact triangle in $\mathrm{hSp}$
\begin{align}\label{eq:fibIZ} 
I_{\Z} \to I_{\Q} \to I_{\Q/\Z}   \to \Sigma I_{\Z}.
\end{align}
The spectrum $I_{\Z}$ is called the \emph{Anderson dual spectrum}. 

Associated to \eqref{eq:fibIZ} is a long exact sequence on cohomology
\[   \ldots   \to  {I_\Q}^{*-1}(X)  \to  I_{\Q/\Z}^{*-1}(X) \to I_{\Z}^*(X) \to {I_\Q}^*(X) \to I_{\Q/\Z}^*(X)  \to \ldots  \]
If the homotopy groups $\pi_*(\Sigma^{\infty}X)$ are finitely generated abelian groups in each degree, one can deduce from this long exact sequence that there is an isomorphism
\[  I_{\Z}^*(X)   \cong   \mathrm{Torsion}(\pi_{*-1}(\Sigma^{\infty}X) )   \oplus \mathrm{Free}(\pi_{*}X). \]
So, computing $I_{\Z}^*(X) \cong [X, \Sigma^*I_{\Z}]$ is equivalent to computing the (stable) homotopy groups of $X$.

\subsection{Thom Spectra}\label{sec:thomspectra} Let $B$ be a topological space and $\nu \colon E \to B$ be a $n$-dimensional real vector bundle on $B$. Then $\mathrm{Sph}(\nu)  \colon \mathrm{Sph}(E)  \to B  $ is the $n$-sphere bundle whose fibers are the one-point compactification of the fibers of $\nu$. The bundle $ \mathrm{Sph}(E)$ has a section $s \colon B \to  \mathrm{Sph}(E)$ which sends $b$ to the point at infinity in the fiber $ \mathrm{Sph}(E)_b$. Then the Thom space of $\nu$ is defined as
\[ B^{\nu}  = \mathrm{Sph}(E)/s(B)   .\]
The \emph{Thom spectrum}, also denoted by $B^{\nu}$, is the suspension spectrum of the Thom space. The composite 
\[\mathrm{Sph}(E)  \to \mathrm{Sph}(E)  \times \mathrm{Sph}(E)  \to B \times B^{\nu},\] 
which is the diagonal map followed by the product of $\mathrm{Sph}(\nu)$ and the quotient map, induces a map $B^{\nu} \to B_+ \smsh B^{\nu}$ called the \emph{Thom diagonal}.

If $\nu = \alpha \oplus \mathbf{n}$ where $\mathbf{n}$ is the trivial $n$-dimensional bundle, then 
\begin{equation}\label{eq:trivialsus}B^{\nu} \simeq \Sigma^n B^{\alpha}. \end{equation}
In particular, if $\mathbf{0}$ is the zero bundle, then $B^{\mathbf{0}} = \Sigma^{\infty} B_{+}$.

The identity \eqref{eq:trivialsus} motivates the definition of Thom spectra for virtual bundles. We give the definition for based spaces $B$ which are CW-complexes with finitely many cells in each dimension. Recall that a virtual bundle $\nu$ over $B$ is the formal difference $\nu = \alpha-\beta$ of vector bundles $\alpha$ and $\beta$ over $B$. If $\alpha$ is an $n$-dimensional bundle and $\beta$ is an $m$-dimensional bundle, we say that $\nu$ has dimension $n-m$. 

If $B$ is compact, we can choose a bundle $\beta^{\perp}$ and an integer $k$ so that $\beta \oplus \beta^{\perp} \cong \mathbf{k}$. In this case, we define
\[ B^{\nu} :=  \Sigma^{-k}B^{\alpha \oplus \beta^{\perp}} .\]
This is independent of the choice of complement $\beta^{\perp}$. Now, let $B_q$ be the $q$-skeleton of $B$. By our assumption on $B$, the space $B_q$ is compact. The bundle $\nu$ pulls back to virtual bundles $\nu_q$ over $B_q$ for each $q$. There are induced maps of Thom spectra $B_{q}^{-\nu_q} \to  B_{q+1}^{-\nu_{q+1}}$, and 
\[B^{-\nu} := \colim_{q} B_{q}^{-\nu_q} .\]

\begin{ex}
Let $O_n$ be the $n$th orthogonal group and $BO_n$ its classifying space. A model for $BO_n$ is given by the Grassmanian $G_n= \varinjlim_{k}\mathrm{Gr}_n(\R^{k})$, where $\mathrm{Gr}_n(\R^{k})$ is the space of $n$-dimensional subspaces of $\R^{k}$ and the maps in the colimit are induced by the inclusions $\R^{k}\subseteq \R^{k+1}$ into the first $k$-coordinates. This has the homotopy type of a CW-complex with finitely many cells in each dimension. Consider the subspace of $G_n \times \R^{\infty}$ given by
\[ E_n= \{ (P,v) \in G_n \times \R^{\infty} : P \in G_n, v\in P \}.  \]
The map 
\[\gamma_n \colon E_n \to G_n\] 
which sends $(P,v)$ to $P$ is an $n$-dimensional vector bundle. This is often called the \emph{universal bundle} over $BO_n$. The associated Thom space is denoted by $MO_n$, which is also used to denote the associated Thom spectrum. 

If $H_n \to O_n$ is a group homomorphism, then the universal bundle $\gamma_n$ pulls back to a bundle over $BH_n$ that we will also denote by $\gamma_n$. The associated Thom space/spectrum is denoted by $MH_n$.

Finally, in these examples, the Thom spectrum of the virtual bundle $-\gamma_n$ is denoted by $MTH_n$ and is called the \emph{Madsen--Tillmann spectrum}.
\end{ex}

\begin{rem}Thom spectra are related to Spanier--Whitehead duality via the \emph{Atiyah duality} isomorphism.
Let $M$ be an $n$-manifold and $TM$ be the tangent space of $M$, then Atiyah duality is the equivalence $M^{-TM} \simeq D(\Sigma^{\infty} M_+)$.
\end{rem}

The cohomology of the Thom space is related to the cohomology of the base space. We treat the case $H^*(-;\Z/2 )$ as it comes free of orientability conditions. Given any virtual $n$-bundle $\nu$, there is an isomorphism 
\[  \mathrm{Th} \colon   H^*(B;\Z/2) \cong  \widetilde{H}^*(B^{\mathbf{0}};\Z/2) \to \widetilde{H}^{*+n}(B^{\nu};\Z/2). \]
called the \emph{Thom isomorphism}. The isomorphism is given by an external cup product with a class 
\[U  = U(\nu) \in \widetilde{H}^n(B^{\nu} ; \Z/2) \] 
called the \emph{Thom class}.


\section{The Steenrod algebra}\label{sec:steenrod}

In this section, we review some basic facts about the Steenrod algebra $\A$ at the prime $p=2$. A very good reference for this material is Mosher--Tangora \cite{MosherTangora} and the interested reader should consult it for a more thorough presentation.

We focus on the prime $p=2$, although much of this story has an analogue at odd primes. We will let 
\[H^*(X) = \widetilde{H}^*(X ;\Z/2)\]
denote the reduced mod $2$ cohomology of $X$ if it is a space, or simply the mod $2$ cohomology of $X$ if it is a (pre)spectrum. If $X \in \Top$ and we want to refer to the unreduced cohomology, we will use the notation $H^*(X;\Z/2)$.

\subsection{Cohomology operations and the Steenrod algebra}\label{sec:steenroddesc}
Let $ \Vect(\Z/2 )$ denote the category of $\Z$-graded $\Z/2 $ vector spaces, so that mod $2$ cohomology is a functor 
\[H^*(-;\Z/2) \colon  \Top \to \Vect(\Z/2 ).\] 
A cohomology operation of degree $k$ is a natural transformation
\[ \gamma \colon H^*(-;\Z/2) \to H^{*+k}(-;\Z/2).\]
The operation $\gamma$ is said to be \emph{stable} if it commutes with the suspension isomorphism
\[\Sigma \colon H^*(-) \xra{\cong} {H}^{*+1}(\Sigma(-)).\]

\begin{ex}
The short exact sequence
\[ 0 \to \Z/2 \to \Z/4 \to \Z/2 \to 0\]
induces a long exact sequence on cohomology
\[ \xymatrix{ \ldots \ar[r] & H^*(-; \Z/4) \ar[r] & H^*(-; \Z/2) \ar[r] & H^{*+1}(-; \Z/2)  \ar[r] & \ldots   }\]
The connecting homomorphism $H^*(-; \Z/2)  \to H^{*+1}(-; \Z/2)  $ is natural and commutes with the suspension isomorphism, so it is a stable cohomology operation of degree one. We call this operation $Sq^1$; it is also known as the Bockstein homomorphism. 
\end{ex}

\begin{ex}
Consider the real projective plane $ \R P^2$. Then 
\[H^*( \R P^2;\Z/2) \cong \Z/2[w_1]/w_1^3\] for a class $w_1$ in degree $1$. (The name $w_1$ will reappear in \fullref{sec:SWclasses} and is used consistently here.) Then $Sq^1(w_1) =w_1^2$. In fact, $\R P^2$ can be constructed from the circle $S^1$ via the following pushout diagram:
\[
\xymatrix{S^1 \ar[r] \ar[d]_{2} & D^2\ar[d] \\ 
S^1 \ar[r] & \R P^2}
\]
The element $w_1$ is dual to the homology class represented by the $1$-cell and the element $w_1^2$ is dual to that represented by the $2$-cell. The cohomology operation $Sq^1(w_1) = w_1^2$ is recording the fact that the $2$-cell of $\R P^2$ is attached to the $1$-cell via the multiplication by $2$ map. See \fullref{fig:RP2CP2RPinf}.
\end{ex}

\begin{defn}
The Steenrod algebra $\mathcal{A}$ is the graded non-commmutative $\F_2$-algebra generated in degree $k$ by the stable cohomology operations of that degree and with multiplication given by composition of operations. 
\end{defn}

\begin{rem}
Let $ \HF $ be the mod-$2$ Eilenberg--MacLane spectrum whose $n$th space is given by $K(\Z/2,n)$.
Since 
\[H^t(-;\Z/2) \cong [(-)_+, K(\Z/2,n)] \cong [ \Sigma^{\infty}(-)_+, \Sigma^{t}  \HF ] \]
it follows from the Yoneda Lemma that degree $t$ cohomology operations are in one to one correspondence with maps $[ \HF , \Sigma^t  \HF ]$. Therefore,
\[\mathcal{A} \cong  \HF ^*( \HF ).\]
\end{rem}

Constructing all cohomology operations is rather difficult and a good reference is given by \cite{MosherTangora}. However, $\mathcal{A}$ can be described axiomatically and this is the approach we take here. 
\begin{thm}\label{thm:squnstable}
For each $k \geq 0$, there exists a stable cohomology operation of degree $k$ 
\[Sq^k \colon H^*(-;\Z/2 ) \to H^{*+k}(-;\Z/2 ) \]
called the \emph{$k$th Steenrod square}. For $X$ a topological space, the Steenrod squares satisfy the following properties:
\begin{enumerate}[(a)]
\item $Sq^0 =1$
\item For $x \in H^k(X;\Z/2)$, $Sq^k(x) = x^2$.
\item If $x\in H^i(X;\Z/2)$ and  $i<k$, then $Sq^k(x) =0$.
\item (Cartan Formula) $Sq^k(xy) = \sum_{i+j = k } Sq^i(x)Sq^j(y)$, where the multiplication on $H^*(X;\Z/2)$ is given by the cup product.
\end{enumerate}
\end{thm}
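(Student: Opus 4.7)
The plan is to construct the Steenrod squares via Steenrod's original cochain-level approach using cup-$i$ products, and then verify the listed properties one by one on singular cohomology.

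First I would introduce the cup-$i$ products on singular cochains with $\Z/2$ coefficients. These are natural bilinear maps
\[ \cup_i \colon C^p(X;\Z/2) \otimes C^q(X;\Z/2) \to C^{p+q-i}(X;\Z/2) \]
for $i \geq 0$, with $\cup_0$ equal to the ordinary cup product, satisfying the coboundary identity
\[ \delta(a \cup_i b) = \delta a \cup_i b + a \cup_i \delta b + a \cup_{i-1} b + b \cup_{i-1} a. \]
These arise from a $\Sigma_2$-equivariant cellular approximation to the diagonal on $E\Sigma_2 \times X$, with $\Sigma_2$ swapping factors. Given this, for a cocycle $a \in C^n(X;\Z/2)$ I would define
\[ Sq^{n-i}[a] = [a \cup_i a] \in H^{2n-i}(X;\Z/2), \]
and set $Sq^k = 0$ when $k > n$. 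The coboundary formula shows $a \cup_i a$ is a cocycle mod $2$, and the same formula shows independence of the representative.

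Naturality in $X$ is automatic from that of $\cup_i$. Axiom (c) is built into the construction. Axiom (b) is immediate since $Sq^n[a] = [a \cup_0 a] = [a^2]$. Axiom (a) follows from the normalization $a \cup_n a = a$ at the cochain level. For stability, I would observe that suspension is realized by cross product with the fundamental class of $S^1$ and that the cup-$i$ products interact with cross products so as to preserve $Sq^k$ under $\Sigma$; equivalently, one can define $Sq^k$ directly on $\HF$-cohomology by a map $\HF \to \Sigma^k \HF$ in $\mathrm{hSp}$, where stability is tautological.

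The main obstacle will be the Cartan formula. The cleanest route is the universal example: by naturality, it suffices to check the formula on the class $\iota_m \times \iota_n \in H^{m+n}(K(\Z/2,m) \times K(\Z/2,n);\Z/2)$ for all $m,n$, and then the computation becomes a finite combinatorial check using axioms (b) and (c). Alternatively, and perhaps more in the spirit of the spectrum-level framework already set up in \fullref{sec:spectra}, one can identify the Steenrod squares with structure maps coming from the extended power $D_2(X) = E\Sigma_{2+} \smsh_{\Sigma_2} (X \smsh X)$; the Cartan formula then falls out of the natural map $D_2(X \smsh Y) \to D_2(X) \smsh D_2(Y)$ and the compatibility of $\HF$ with smash products. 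I would expect to take this second route, deferring the detailed combinatorics of the cup-$i$ products to \cite{MosherTangora} as the paper already suggests.
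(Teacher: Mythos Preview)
The paper does not prove this theorem. Immediately before stating it, the text reads: ``Constructing all cohomology operations is rather difficult and a good reference is given by \cite{MosherTangora}. However, $\mathcal{A}$ can be described axiomatically and this is the approach we take here.'' \fullref{thm:squnstable} is thus offered as an axiomatic description, with the construction and verification deferred entirely to the reference.

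Your proposal, by contrast, is an actual proof outline along the classical lines of Steenrod and Mosher--Tangora: build $Sq^k$ from cup-$i$ products, then verify the axioms. This is correct in outline, and is essentially the content of the reference the paper cites. A small caution: your first route to the Cartan formula (``a finite combinatorial check using axioms (b) and (c)'' on the universal class $\iota_m \times \iota_n$) undersells the difficulty --- axioms (b) and (c) alone do not pin down $Sq^k(\iota_m \times \iota_n)$, and the genuine argument requires either a direct cochain computation or knowledge of $H^*(K(\Z/2,m) \times K(\Z/2,n);\Z/2)$. Your second route via the extended power $D_2$ and the map $D_2(X \smsh Y) \to D_2(X) \smsh D_2(Y)$ is the cleaner one and does give Cartan; since you already indicate you would take that route, the proposal is sound.
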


\begin{rem}\label{rem:twoforms}
In \fullref{thm:squnstable}, the Cartan Formula is only expressed for the cup product of elements in $H^*(X;\Z/2 )$. However, it also holds for the cross product. That is, if $x \in H^*(X;\Z/2)$ and $y \in H^*(Y;\Z/2)$, then for 
\[x \otimes y \in H^*(X \times Y;\Z/2) \cong H^*(X;\Z/2)\otimes_{\Z/2} H^*(Y;\Z/2),\] 
then
\[ Sq^k(x \otimes y) = \sum_{i+j = k } Sq^i(x) \otimes Sq^j(y).\] 
If one is working with the reduced cohomology groups, then the same formula holds for $H^*(X \smsh Y) \cong H^*(X) \otimes_{\Z/2} H^*(Y)$.

Finally, if there is a continuous map $Y \to X \times Y $, so that $H^*(Y;\Z/2)$ becomes a module over $H^*(X;\Z/2)$, then the Cartan Formula implies that 
\[ Sq^k(x \cdot y) = \sum_{i+j = k } Sq^i(x) \cdot Sq^j(y)\]
where $\cdot$ denotes the action of $H^*(X;\Z/2)$ on $H^*(Y;\Z/2)$.
\end{rem}

\begin{thm}
The Steenrod algebra $\mathcal{A}$ is the tensor algebra over $\Z/2$ generated by the $Sq^i$ subject to the following relations:
\begin{enumerate}[(1)]
\item $Sq^0 =1$ 
\item The Adem relations: For $0< a <2b$,
\[Sq^aSq^b = \sum_{c=0}^{[a/2]} \binom{b-c-1}{a-2c} Sq^{a+b-c} Sq^c.\]
\end{enumerate}
\end{thm}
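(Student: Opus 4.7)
The plan is to prove the theorem in two halves: first, that the Adem relations actually hold in $\mathcal{A}$, and second, that together with $Sq^0=1$ they constitute a complete set of relations. Let $\widetilde{\mathcal{A}}$ denote the abstract $\Z/2$-algebra presented by generators $Sq^i$ modulo (1) and (2). The universal property of tensor algebras produces a surjective algebra map $\varphi\co \widetilde{\mathcal{A}} \to \mathcal{A}$ provided the relations are valid in $\mathcal{A}$; it then suffices to prove $\varphi$ is injective.

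First I would verify that the Adem relations hold in $\mathcal{A}$. The cleanest route is to evaluate $Sq^a Sq^b$ on the cohomology of a sufficiently generic test space, using the Cartan formula of \fullref{thm:squnstable}(d) together with \fullref{rem:twoforms}. The classical choice is $(\R P^\infty)^{\times n}$, whose mod-$2$ cohomology is the polynomial algebra $\Z/2[x_1,\ldots,x_n]$ on degree-one generators; because $Sq^i(x_j) = 0$ for $i\geq 2$ and $Sq^1(x_j) = x_j^2$, the Cartan formula reduces each $Sq^k$ on a monomial to a combinatorial sum of binomial coefficients mod $2$. A direct expansion of $Sq^a Sq^b(x_1 \cdots x_m)$ for $m$ sufficiently large compared to $a+b$ gives both sides of the Adem relation as polynomials, and matching coefficients (via Lucas' theorem on $\binom{b-c-1}{a-2c} \pmod 2$) shows equality. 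Since elements of $\mathcal{A}$ are determined by their action on cohomology of all spaces, and the polynomial rings $\Z/2[x_1,\ldots,x_m]$ embed faithfully enough to detect any stable operation in the relevant degree, the relation holds in $\mathcal{A}$.

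Second, I would establish surjectivity of $\varphi$ onto a spanning set of admissible monomials. Call $Sq^I = Sq^{i_1}Sq^{i_2}\cdots Sq^{i_k}$ \emph{admissible} if $i_j \geq 2i_{j+1}$ for each $j$. Using the Adem relations, any non-admissible pair $Sq^a Sq^b$ with $a < 2b$ rewrites as a $\Z/2$-sum of monomials that are strictly smaller in a carefully chosen lexicographic or moment weight; iterating this reduction on any monomial terminates with a sum of admissibles. Thus admissible monomials span $\widetilde{\mathcal{A}}$, and $\varphi$ sends them to the corresponding admissibles in $\mathcal{A}$.

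The main obstacle is linear independence of the admissible $Sq^I$ in $\mathcal{A}$, which is what makes $\varphi$ injective. My plan is to invoke Serre's theorem: the mod-$2$ cohomology $H^*(K(\Z/2,n);\Z/2)$ is, as a $\Z/2$-algebra, a polynomial ring on the classes $Sq^I \iota_n$ where $I$ ranges over admissible sequences of excess less than $n$ (here $\iota_n \in H^n(K(\Z/2,n);\Z/2)$ is the fundamental class, and the excess of $I = (i_1,\ldots,i_k)$ is $\sum_j (i_j - 2i_{j+1})$ with $i_{k+1}=0$). Given this, any finite $\Z/2$-linear relation among admissibles in $\mathcal{A}$, evaluated against $\iota_n$ for $n$ sufficiently large (larger than the maximum excess appearing), would yield a relation among distinct polynomial generators of $H^*(K(\Z/2,n);\Z/2)$, forcing all coefficients to vanish. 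The harder part of Serre's theorem is proved by induction on $n$ using the Serre spectral sequence of the path-loop fibration $K(\Z/2,n-1)\to PK(\Z/2,n)\to K(\Z/2,n)$ together with Borel's transgression theorem; I would cite this rather than reproduce it. Combining surjectivity onto admissibles with independence of admissibles completes the proof that $\varphi$ is an isomorphism.
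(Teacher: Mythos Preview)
The paper does not prove this theorem: it is stated without proof as a classical structural result about $\mathcal{A}$, with Mosher--Tangora \cite{MosherTangora} given as the reference for details. Your outline is precisely the classical argument found there (and in Steenrod--Epstein): reduce an arbitrary monomial to a sum of admissibles using the Adem relations, then show admissible monomials are linearly independent via their action on the cohomology of products of $\R P^\infty$ (or, equivalently, via Serre's computation of $H^*(K(\Z/2,n);\Z/2)$). So there is nothing to compare against; your proposal is the standard proof the paper is implicitly citing.

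One point worth tightening: in your first step you verify the Adem relations by evaluating both sides on $x_1\cdots x_m \in H^*((\R P^\infty)^m)$ and then assert that these spaces ``embed faithfully enough to detect any stable operation.'' That detection statement is true, but it is essentially a consequence of Serre's theorem (or of the closely related fact that the map $(\R P^\infty)^n \to K(\Z/2,n)$ classifying the product class induces an injection on cohomology in the relevant range). Since you are already invoking Serre's theorem for the independence step, there is no circularity, but you should make the dependence explicit rather than leave it as a separate unjustified claim. Alternatively, the Adem relations can be established directly from the construction of the squares (as in Steenrod--Epstein) or via the Bullett--Macdonald generating-function identity, which avoids the detection issue entirely.
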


\begin{rem}\label{rem:hopfalgebra}
The Steenrod algebra $\A$ is a graded, non-commutative, augmented algebra. In fact, it is a cocommutative Hopf algebra over $\Z/2 $\footnote{The authors have heard the following anecdote from Doug Ravenel: During a lecture of Milnor on Hopf algebras at Princeton many years ago, Steenrod asked if there were any interesting examples.} whose coproduct $\psi \colon \A \to \A \otimes \A$ is determined by
\[\psi(Sq^k)  = \sum_{i+j =k} Sq^i \otimes Sq^j.\]
The antipode $\chi \colon \A \to \A$ is defined inductively by the identities
\begin{align*}
 \chi(Sq^0)&=Sq^0,  &  \sum_{i=0}^k Sq^i \chi(Sq^{k-i})&=0, \ k>0.
 \end{align*}
We note that $\A_0 =\Z/2 $ and let $I(\A)$ be the kernel of the augmentation $\varepsilon \colon \A \to \Z/2 $.
\end{rem}

\begin{rem}\label{rem:modA}
We let $\Mod_{\mathcal{A}}$ be the category of graded left modules over $\mathcal{A}$. These are $\Z$-graded $\Z/2 $-vector spaces together with a left action of $\mathcal{A}$. Given $M$ and $N$ in $\Mod_{\mathcal{A}}$, we let $M\otimes_{\Z/2 }N$ be the module whose structure is given by $a(m\otimes n) = \sum a_im\otimes a_jn$, where $a\in \A$ and $ \psi(a) = \sum a_i\otimes a_j$.

Modules which satisfy the conditions of \fullref{thm:squnstable} are called unstable modules. The cohomology of a spectrum need not be an unstable module in general.
\end{rem}

 To specify an $\mathcal{A}$-module structure on a graded $\Z/2$-vector space $M$, one must describe the action of the Steenrod squares on $M$. We record this information in a picture we call an \emph{cell diagram}. See \fullref{fig:cellexplanation}. The following result implies that specifying the action of $Sq^{2^n}$ for $n\geq 0$ is enough to describe an $\A$-module.
\begin{thm}
$\mathcal{A}$ is generated as an algebra by $Sq^{2^n}$ for $n\geq 0$.
\end{thm}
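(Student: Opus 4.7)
The plan is to prove the theorem by strong induction on $k$, showing that $Sq^k$ is a polynomial (under composition) in the $Sq^{2^n}$. The base cases are $Sq^0 = 1$ and the $Sq^{2^n}$ themselves. For the inductive step, it suffices to show that whenever $k$ is not a power of $2$, the operation $Sq^k$ can be written as a sum of products $Sq^i Sq^j$ with $i, j < k$; once this is done, the inductive hypothesis finishes the argument.

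To extract such a decomposition from the Adem relations, I would write $k$ in the form $k = 2^m(2s+1)$ with $s \geq 1$ (which is possible exactly when $k$ is not a power of $2$), and set $a = 2^m$ and $b = 2^{m+1}s$, so that $a + b = k$. Since $s \geq 1$, one checks immediately that $0 < a < 2b$, so the Adem relation applies to the product $Sq^a Sq^b$. The resulting expansion is
\[
Sq^a Sq^b \;=\; \binom{b-1}{a}\, Sq^{k} \;+\; \sum_{c=1}^{[a/2]} \binom{b-c-1}{a-2c}\, Sq^{a+b-c}\, Sq^{c}.
\]
The key computational step is to verify that the leading coefficient $\binom{b-1}{a} = \binom{2^{m+1}s - 1}{2^m}$ is odd. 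For this I would use Lucas' theorem: since the binary expansion of $2^{m+1}s - 1$ has a $1$ in every position $0,1,\dots,m$, the bit at position $m$ is $1$, which is the only nonzero bit of $a = 2^m$, so the binomial is odd.

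With the leading coefficient equal to $1$ in $\F_2$, one rearranges to express $Sq^k$ as $Sq^a Sq^b$ plus a sum of products $Sq^{a+b-c} Sq^c$ with $c \geq 1$. In every such term both factors have degree strictly less than $k$, so the inductive hypothesis applies to each factor and rewrites the whole expression in terms of the operations $Sq^{2^n}$.

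The main obstacle, and essentially the only nontrivial point, is identifying a workable choice of $(a,b)$ so that the Adem relation both applies ($a < 2b$) and produces $Sq^k$ with an odd coefficient; the decomposition $k = 2^m(2s+1)$ together with the Lucas-theorem check is what makes this go through cleanly. Everything else is bookkeeping on the induction.
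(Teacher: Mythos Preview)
Your argument is correct and is the standard proof (essentially the one in Mosher--Tangora, which the paper cites as a reference). The paper itself states this theorem without proof, so there is nothing to compare against; your induction via the Adem relations with the specific choice $a=2^m$, $b=2^{m+1}s$ and the Lucas-theorem check that $\binom{2^{m+1}s-1}{2^m}$ is odd is exactly the expected route.
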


\begin{ex}
Consider the complex projective plane $ \C P^2$. Then 
\[H^*( \C P^2;\Z/2) \cong \Z/2[w_2]/w_2^3\] for a class $w_2$ in degree $2$. (The name $w_2$ reappears in \fullref{sec:SWclasses} and is used consistently here.) It follows from the properties of the squares that $Sq^2(w_2) =w_2^2$. In fact, $\C P^2$ can be constructed from the sphere $S^2$ via the following pushout diagram:
\[
\xymatrix{S^3 \ar[r] \ar[d]_{\eta} & D^4\ar[d] \\ 
S^2 \ar[r] & \C P^2}
\]
where $\eta \colon S^3 \to S^2$ is the Hopf fibration. The element $w_2$ is dual to the homology class represented by the $2$-cell and the element $w_2^2$ is dual to that represented by the $4$-cell. The cohomology operation $Sq^2(w_2)=w_2^2$ is recording the fact that the $4$-cell of $\C P^2$ is attached to the $2$-cell via the map $\eta$. See \fullref{fig:RP2CP2RPinf}.
\end{ex}

\begin{ex}
The Steenrod operations for the cohomology of $\R P^{\infty} \simeq BO_1$ are completely explicit.
Writing $H^*(\R P^{\infty};\Z/2) \cong \Z/2[w_1]$ for $w_1$ in degree $1$, we have
\[ Sq^n(w_1^m) = \binom{m}{n}w_1^{m+n}.\]
Using the naturality of the squares, this example often comes in handy in computing operations in the cohomology of other spaces. See \fullref{fig:MO1MU1}
\end{ex}

\begin{figure}[ht]
\center
  {\scalefont{.5} \begin{tikzpicture}[scale = .6]
  \fill  (0,0) circle (3pt) node[anchor=west] {$x$} ;
    \sqone(0,0,black);
        \sqtwoL(0,0,black);
  \fill  (0,1) circle (3pt) node[anchor=west] {$Sq^1(x)$} ;
        \sqtwoCR(0,1,black);
          \fill  (2,3) circle (3pt) node[anchor=west] {$Sq^2(Sq^1(x))$} ;
  \fill  (0,2) circle (3pt) node[anchor=east] {$Sq^2(x)$} ;
\end{tikzpicture}}
\caption{A \emph{cell diagram}, used to depict the Steenrod operations on the cohomology of a space or spectrum. Each $\bullet$ denotes a generator of $\Z/2 $. The difference in cohomological degree of the generators is represented vertically. Straight lines denote the action of $Sq^1$ and curved lines denote the action of $Sq^2$.} 
\label{fig:cellexplanation}
\end{figure}

\begin{figure}[ht]
\center
{\tiny\begin{tikzpicture}[scale = .6]
\fill (0,1) circle (3pt) node[anchor=east] {$w_1$};
\fill (0,2) circle (3pt) ;
\sqone(0,1,black);
\fill  (3,2) circle (3pt) node[anchor=east] {$w_2$};
\fill  (3,4) circle (3pt);
\sqtwoR (3,2,black);
\end{tikzpicture}}
\caption{The structure of $H^*(\R P^2)$ (left), $H^*(\C P^2)$ (right) 
as modules over $\A$. The class $w_1$ is in $H^1(\R P^{2})$.
The class $w_2$ is in $H^2(\C P^2)$.} 
\label{fig:RP2CP2RPinf}
\end{figure}

\subsection{The Subalgebras $\A_n $}\label{sec:ans}

The Steenrod algebra is an infinitely generated non-commutative algebra. However, it is finitely generated in each degree. In fact, it is filtered by the finite sub-Hopf algebras generated by $Sq^1, \ldots, Sq^{2^n}$, which are denoted $\A_n $. Further, each algebra $\A_n $ contains a commutative subalgebra generated by elements $Q_0, \ldots, Q_n$ which are defined inductively by
\begin{align*}
Q_0 &= Sq^1, \\
Q_i &= Sq^{2^i}Q_{i-1} + Q_{i-1} Sq^{2^i}.
\end{align*}
In fact, the $Q_i$'s generate an exterior algebra and we let $\E_n= E(Q_{0}, \ldots, Q_n)$.

For example, the algebra $\A_1  $ is the subalgebra of $\A$ generated by $Sq^1$ and $Sq^2$. As a module over itself, $\A_1  $ admits the cell diagram depicted in \fullref{fig:A1E1}.

\begin{figure}[ht]
\center
\begin{tikzpicture}[scale=.6]
  \Aone (0,0)
  
  \fill (6,0) circle (3pt) ;
  \sqone (6,0,black);
    \fill (6,1) circle (3pt) ;
     \draw[dashed]  (6,0) ..controls (5,1.5)..  (6,3);
     \draw[dashed]  (6,1) ..controls (7,2.5)..  (6,4);
         \fill (6,3) circle (3pt) ;
              \fill (6,4) circle (3pt) ;
           \sqone (6,3,black);
\end{tikzpicture}
\caption{$\A_1$ (left) and its subalgebra $\E_1$ (right). The dashed lines represent the action of $Q_1 = Sq^1Sq^2 + Sq^2 Sq^1$.}
\label{fig:A1E1}
\end{figure}

\begin{defn}\label{defn:AmmB}
Let $\mathcal{B}$ be a subalgebra of a $\Z/2 $-algebra $\mathcal{C}$. Then
\[\mathcal{C} /\!\!/ \mathcal{B} := \mathcal{C}\otimes_{\mathcal{B}} \Z/2   \]
where $\Z/2 $ denotes the trivial $\cB$ module concentrated in degree zero.
\end{defn}

\begin{rem}The subalgebras $\A_1 $ and $\E_1 $ appear naturally in classical computations as they are related to $K$-theory. Let $\ku$ be the connective $K$-theory spectrum and $\ko$ its real version. See \fullref{sec:connective}. Then there are isomorphisms of $\A$-modules
\begin{align*}
H^*(\ku) &\cong \A  /\!\!/ \E_1 , & H^*(\ko) &\cong \A  /\!\!/  \A_1 .
\end{align*}
Similarly, if  $\tmf$ is the connective spectrum of topological modular forms and $BP\langle 2\rangle$ is a spectrum obtained from the Brown-Peterson spectrum $BP$ by killing a choice of generators $v_k$ for $k\geq 3$, then
\begin{align*}
H^*(BP\langle 2\rangle ) &\cong \A  /\!\!/ \E_2, & H^*(\tmf) &\cong \A  /\!\!/  \A_2 .
\end{align*}
These spectra are the chromatic height $2$ analogues of $\ku$ and $\ko$ respectively.\end{rem}

\subsection{Thom Spectra and Stiefel--Whitney Classes}\label{sec:SWclasses}

Given an $n$-dimensional vector bundle $\nu \colon E \to B$, we recalled the definition of the Thom space $B^{\nu}$ of $\nu$ in \fullref{sec:thomspectra}. Further, we recalled the Thom isomorphism
\[ \mathrm{Th} \colon H^*(B;\Z/2) \to \widetilde{H}^{*+n}(B^{\nu};\Z/2)\]
which was given by the cup product with a Thom class $U \in \widetilde{H}^n(B^{\nu} ; \Z/2)$. We note that $\mathrm{Th}(1)=U$ and write 
\[\mathrm{Th}(x) = xU.\]

\begin{warn}
The Steenrod operations do not commute with the Thom isomorphism. This fact is crucial for \fullref{defn:SW} below.
\end{warn}

The Thom isomorphism is used to define classical invariants of a bundle $\nu$ called the \emph{Stiefel--Whitney classes}.
\begin{defn}\label{defn:SW}
The \emph{$i$th Stiefel--Whitney class} $w_i=w_i(\nu)$ of a vector bundle $\nu$ is defined by
\[ w_i = \mathrm{Th}^{-1}( Sq^i(U)) \in H^i(B;\Z/2).\]
In particular, they satisfy the identity
\[w_i U = Sq^i(U).\]
The \emph{total Stiefel--Whitney class} is the formal sum
\[w = w(\nu) = 1+w_1 + w_2 +\ldots  \]
\end{defn}

\begin{rem}\label{rem:totalSW}
If $\nu$ is a trivial bundle, the Stiefel--Whitney classes are trivial except for $w_0 =1$.  Given two vector bundles $\nu$ and $\eta$, 
one can show that
\[ w(\nu \oplus \eta) = w(\nu) w(\eta).\]
If follows that
\[w(\nu \oplus \nu^{\perp}) =1 \]
for any orthogonal complement of an embedding of $\nu$ into a trivial bundle $\mathbf{m}$. This identity allows us to determine the Stiefel--Whitney classes of $\nu^{\perp}$ given those of $\nu$. It also allows us to define the Stiefel--Whitney classes of a virtual bundle. In particular, 
\[w(-\nu) = w(\nu)^{-1}. \]
\end{rem}

The effect of the Steenrod squares on the Stiefel--Whitney classes is given by the Wu formula.
\begin{thm}[Wu Formula]
Let $\nu \colon E \to B$ be a vector bundle over $B$. Then
\[Sq^i(w_j) = \sum_{k=0}^{i} \binom{(j-i)+(k-1)}{k}w_{i-k}w_{j+k}.\]
\end{thm}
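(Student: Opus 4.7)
The plan is to prove the Wu formula using the \emph{splitting principle}, reducing the identity to a combinatorial statement about elementary symmetric polynomials in $\Z/2[x_1, \ldots, x_n]$, where the Steenrod operations on generators are completely explicit.

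By the naturality of both the Steenrod squares and of the Stiefel--Whitney classes, it suffices to establish the formula for the universal $n$-plane bundle $\gamma_n \colon E_n \to BO_n$: any vector bundle $\nu \to B$ is the pullback of $\gamma_n$ along a classifying map, and both sides of the claimed identity transform naturally. For the universal case, consider the Whitney-sum map $\mu \colon BO_1^n \to BO_n$ classifying $\gamma_1 \times \cdots \times \gamma_1$. A classical calculation identifies $\mu^* \colon H^*(BO_n;\Z/2) \to H^*(BO_1^n;\Z/2) = \Z/2[x_1, \ldots, x_n]$ as the inclusion of the subring of symmetric polynomials, and the multiplicativity of the total Stiefel--Whitney class recorded in \fullref{rem:totalSW} gives $\mu^*(w_j) = e_j(x_1, \ldots, x_n)$, the $j$th elementary symmetric polynomial. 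Since $\mu^*$ is injective and commutes with Steenrod squares, the Wu formula in $H^*(BO_n)$ is equivalent to the corresponding identity in $\Z/2[x_1, \ldots, x_n]$ with every $w_\ell$ replaced by $e_\ell$.

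In the polynomial ring, the only nontrivial actions on the degree-one generators are $Sq^0(x_i) = x_i$ and $Sq^1(x_i) = x_i^2$, with higher squares vanishing. Iterating the Cartan formula on $e_j = \sum_{|S|=j}\prod_{l \in S} x_l$ expresses $Sq^i(e_j)$ as a sum of monomials of the form $\prod_{l \in A} x_l^2 \prod_{l \in B} x_l$ with $A$ and $B$ disjoint, $|A| = i$, and $|B| = j - i$. Similarly, expanding each product $e_{i-k} e_{j+k}$ and collecting terms by their squared/linear bipartition reduces the Wu formula to a statement about how often each such monomial arises on the right-hand side.

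The main obstacle is this final combinatorial matching: one must verify that for each fixed pair $(|A|,|B|) = (i, j-i)$ of disjoint subsets of $\{1, \ldots, n\}$, the coefficient $\binom{(j-i)+(k-1)}{k}$ correctly counts modulo $2$ the number of ways the associated monomial is produced by $e_{i-k} e_{j+k}$, summed over $k$. This is a Vandermonde-type congruence over $\F_2$ that can be established either by a direct counting argument or by applying Lucas' theorem to the resulting binomial sum; it is the only substantive computation in the proof, the rest being a formal consequence of naturality and the splitting principle.
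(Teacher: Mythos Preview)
The paper does not prove the Wu formula; it is stated as a classical result without proof (the surrounding section cites Mosher--Tangora for background on the Steenrod algebra). So there is no ``paper's own proof'' to compare against.

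Your splitting-principle argument is the standard one and is correct. The reduction via $\mu^* \colon H^*(BO_n;\Z/2) \hookrightarrow \Z/2[x_1,\ldots,x_n]$ and the Cartan-formula computation of $Sq^i(e_j)$ are exactly right. The combinatorial step you flag as the main obstacle does go through cleanly: for a monomial $\prod_{l\in A}x_l^2\prod_{l\in B}x_l$ with $|A|=a$ and $|B|=i+j-2a$, its coefficient in $e_{i-k}e_{j+k}$ is $\binom{i+j-2a}{i-k-a}$, so the total coefficient on the right-hand side is
\[
\sum_{k}\binom{(j-i)+k-1}{k}\binom{i+j-2a}{i-k-a}
= [t^{i-a}]\,\frac{(1+t)^{(j-i)+2(i-a)}}{(1-t)^{j-i}}
= [t^{i-a}]\,(1+t)^{2(i-a)} = \binom{2(i-a)}{i-a}
\]
over $\Z/2$ (using $1-t=1+t$). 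This central binomial coefficient is even unless $i=a$, which is precisely the condition singling out the monomials appearing in $Sq^i(e_j)$. So your outline completes to a full proof with this one generating-function identity; calling it a Vandermonde-type congruence is accurate.
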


\begin{rem}
Applying $\mathrm{Th}(-)$ to both sides of the display in \fullref{defn:SW}, one deduces that 
\[Sq^i(U) = w_iU \in H^{i+n}(B^{\nu}). \]
Further, the Thom diagonal gives $H^*(B^{\nu})$ the structure of an $H^*(B; \Z/2)$-module. So
using \fullref{rem:twoforms}, for $x \in H^*(B;\Z/2)$ we have
\[Sq^k(x U) = \sum_{i+j=k} Sq^i(x)Sq^j(U) =  \sum_{i+j=k} Sq^i(x)w_j U. \]
This determines the structure of $H^*(B^{\nu})$ as an $\A$-module based on that of $H^*(B)$.
\end{rem}

\subsection{Examples of computations of Steenrod operations}\label{sec:compA1}
In this section, we go through a few selected computations of Steenrod operations. Most of the examples play a role in Section 10 of \cite{FH}. Further, the computations illustrate many of the concepts and techniques mentioned above. We do not do all the computations in detail but try to give enough information for the reader to learn the techniques and be able to reproduce them on their own.
\begin{ex}\label{ex:bogen}
The classifying space $BO_n$ carries the universal $n$-plane bundle $\gamma_n$, and its Thom space is denoted $MO_n$. The cohomology of $BO_n$ is 
\begin{align*}H^*(BO_n;\Z/2) &= \Z/2  [w_1, \ldots, w_n], & H^*(MO_n) &= Z/2 [w_1, \ldots, w_n]\{U\}.\end{align*}
Similarly, $BSO_n$ carries the universal oriented $n$-plane bundle and its Thom space is denoted by $MSO_n$. A bundle is oriented if and only if $w_1=0$, and so
\begin{align*} H^*(BSO_n;\Z/2) &= \Z/2  [w_2, \ldots, w_n], & H^*(MSO_n) &=  \Z/2 [w_2, \ldots, w_n]\{U\}.\end{align*}
\end{ex}

\begin{ex}\label{ex:mo1mu1}
As special cases of \fullref{ex:bogen} we have
\begin{align*}
H^*(\R P^{\infty};\Z/2) &\cong H^*(BO_1;\Z/2) \cong \Z/2 [w_1],  & H^*(MO_1) \cong  \Z/2 [w_1]\{U\} 
\end{align*}
Further, $Sq^1(w_1^{k}U) = w_1^{k+1}U$ if $k$ is even and zero if $k$ is odd. Using the Cartan Formula as in \fullref{rem:twoforms}, one deduces that
$Sq^2(w_1^{k}U) = \binom{k-1}{2} w_1^{k+2}U$. In fact, $MO_1 \simeq \R P^{\infty}$.

Similarly, 
\begin{align*}
H^*(\C P^{\infty};\Z/2) &\cong H^*(BSO_2;\Z/2) \cong \Z/2 [w_2], & H^*(MU_1) \cong  \Z/2 [w_2]\{U\},
\end{align*}
and $Sq^2(w_2^{k}U) = w_2^{k+1}U$ if $k$ is even and zero if $k$ is odd. All of the $Sq^1$s are zero. In fact, $MU_1 \simeq \C P^{\infty}$.
\begin{figure}[ht]
\center
{\tiny
  \begin{tikzpicture}[scale = .6]
    \fill (0,0) circle (3pt) node[anchor=east]  {$w_1$} ;  
    \sqone(0,0,black);
        \fill (0,1) circle (3pt) node[anchor=east]  {$w_1^2$};  
         \sqtwoR(0,1,black);
          \fill (0,2) circle (3pt) node[anchor=east]  {$w_1^3$};  
              \sqone(0,2,black);
                      \sqtwoL(0,2,black);
                    \fill (0,3) circle (3pt) node[anchor=east]  {$w_1^4$};  
                    \fill (0,4) circle (3pt) node[anchor=east]  {$w_1^5$};  
                               \sqone(0,4,black);
                       \fill (0,5) circle (3pt) node[anchor=east]  {$w_1^6$};  
                         \sqtwoL(0,5,black);
                         \fill (0,6) circle (3pt) node[anchor=east]  {$w_1^7$};  
                             \sqone(0,6,black);
                           \fill (0,7) circle (3pt) node[anchor=east]  {$w_1^8$};  
                         \sqtwoR(0,6,black);
                       
                           \fill (4,0) circle (3pt) node[anchor=east]  {$U$} ;  
    \sqone(4,0,black);
        \fill (4,1) circle (3pt) node[anchor=east]  {$w_1U$};  
         \sqtwoR(4,1,black);
          \fill (4,2) circle (3pt) node[anchor=east]  {$w_1^2U$};  
              \sqone(4,2,black);
                      \sqtwoL(4,2,black);
                    \fill (4,3) circle (3pt) node[anchor=east]  {$w_1^3U$};  
                    \fill (4,4) circle (3pt) node[anchor=east]  {$w_1^4U$};  
                               \sqone(4,4,black);
                       \fill (4,5) circle (3pt) node[anchor=east]  {$w_1^5U$};  
                                  \sqtwoL(4,5,black);
                         \fill (4,6) circle (3pt) node[anchor=east]  {$w_1^6U$};  
                             \sqone(4,6,black);
                           \fill (4,7) circle (3pt) node[anchor=east]  {$w_1^7U$};  
                              \sqtwoR(4,6,black);
                       
                           \fill (8,1) circle (3pt) node[anchor=east]  {$w_2$} ;  
                                  \sqtwoL(8,1,black);
                              \fill (8,3) circle (3pt) node[anchor=east]  {$w_2^2$} ; 
                               \fill (8,5) circle (3pt) node[anchor=east]  {$w_2^3$} ;  
                                    \sqtwoL(8,5,black);
                       \fill (8,7) circle (3pt) node[anchor=east]  {$w_2^4$} ;

                           \fill (12,1) circle (3pt) node[anchor=east]  {$U$} ;  
                                  \sqtwoL(12,1,black);
                              \fill (12,3) circle (3pt) node[anchor=east]  {$w_2 U$} ; 
                               \fill (12,5) circle (3pt) node[anchor=east]  {$w_2^2 U$} ;  
                                    \sqtwoL(12,5,black);
                       \fill (12,7) circle (3pt) node[anchor=east]  {$w_2^3 U$} ;  
                       
\end{tikzpicture}}
\caption{From the left, the structures of $H^*(BO_1)$,  $H^*(MO_1)$, $H^*(BU_1)$ and  $H^*(MU_1)$ as $\A_1$-modules.}
\label{fig:MO1MU1}
\end{figure}
\end{ex}

\begin{ex}
As an exercise that will be relevant in \fullref{sec:examples}, we consider the structure of $H^*(MU_1 \smsh MO_1)$ as modules over $\A_1$. By the K\"unneth isomorphism, we have
\[ H^*(MU_1 \smsh MO_1) \cong H^*(MU_1) \otimes_{\Z/2 } H^*(MO_1). \]
We use the Cartan formula as discussed in \fullref{rem:twoforms}. Since all of the $Sq^1$s vanish in $H^*(MU_1)$, we deduce from the Cartan formula that for any $a \in H^*(MU_1)$ and $b \in H^*(MO_1)$,
\begin{align*}
Sq^1(a \otimes b) &= Sq^1(a) \otimes b + a \otimes Sq^1(b) = a \otimes Sq^1(b)  ,  \\
 Sq^2(a \otimes b) &=Sq^2(a) \otimes b +  Sq^1(a) \otimes Sq^1(b) + a \otimes Sq^2(b) = Sq^2(a) \otimes b +a \otimes Sq^2(b) .
\end{align*}
The $\A_1$-module structure is illustrated in a small range in \fullref{fig:MU1smshMO1}.

\begin{figure}[ht]
\center
{\tiny
  \begin{tikzpicture}[scale = .6]
        \fill (0,1) circle (3pt) node[anchor=east]  {$U \otimes U$};  
           \sqone(0,1,black);
         \sqtwoL(0,1,black);
          \fill (0,2) circle (3pt) node[anchor=west]  {$U\otimes w_1 U$};  
              \sqtwoCR(0,2,black);
        
                    \fill (0,3) circle (3pt) node[anchor=east]  {$w_2 U\otimes U$};  
                      
                                   \sqone(0,3,black);
                    \fill (0,4) circle (3pt);
                                \sqtwoL(0,4,black);
                                      
                       \fill (0,5) circle (3pt) ;  
                      \sqone(0,5,black);
                         \sqtwoR(0,5,black);
                         \fill (0,6) circle (3pt) ;  

                           \fill (0,7) circle (3pt) ;  
                              \sqone(0,7,black);
                               \fill (0,8) circle (3pt) ;  
                                 \sqtwoR(0,8,black);
                                  \fill (0,9) circle (3pt) ;  
                                   \sqone(0,9,black);
                                      \sqtwoL(0,9,black);
                                      \fill (0,10) circle (3pt) ;  
                                        \fill (0,11) circle (3pt) ; 
                                        \sqone(0,11,black); 
                           
\fill (2,3) circle (3pt) node[anchor=west]  {$U \otimes w_1^2U   +  w_2U \otimes U $};  
\sqone(2,3,black);
\sqtwoR(2,3,black);
\fill (2,4) circle (3pt) ;
\fill (2,5) circle (3pt) ;
\sqone(2,5,black);
\fill (2,6) circle (3pt);
\sqtwoR(2,6,black);
\fill (2,7) circle (3pt);
\sqtwoL(2,7,black);
\sqone(2,7,black);
\fill (2,8) circle (3pt);
\fill (2,9) circle (3pt);
\sqone(2,9,black);
\fill (2,10) circle (3pt);
\sqtwoL(2,10,black);
\fill (2,11) circle (3pt);
\sqone(2,11,black);

                              \fill (6,5) circle (3pt) node[anchor=east]  {$w_2^2U \otimes U $};  
           \sqone(6,5,black);
         \sqtwoL(6,5,black);
          \fill (6,6) circle (3pt) ;
       
              \sqtwoCR(6,6,black);
        
                    \fill (6,7) circle (3pt) ;

                                   \sqone(6,7,black);
                    \fill (6,8) circle (3pt)  ;  
                                \sqtwoL(6,8,black);
                                      
                       \fill (6,9) circle (3pt) ;  
                      \sqone(6,9,black);
                         \sqtwoR(6,9,black);
                         \fill (6,10) circle (3pt) ;

                           \fill (6,11) circle (3pt) ;  
                            \sqone(6,11,black); 
                           
\fill (8,7) circle (3pt) node[anchor=west]  {$w_2^2U \otimes w_1^2U   + w_2^3U \otimes U  $};
\sqone(8,7,black);
\sqtwoR(8,7,black);
\fill (8,8) circle (3pt) ;
\fill (8,9) circle (3pt) ;
\sqone(8,9,black);
\fill (8,10) circle (3pt);
\sqtwoR(8,10,black);
\fill (8,11) circle (3pt);
\sqone(8,11,black);                       
\end{tikzpicture}}
\caption{The $\A_1$-submodule of $H^*(MU_1 \smsh MO_1)$ generated by $U \otimes U$, $U\otimes  w_1^2U  + w_2U \otimes U  $,  $w_2^2U \otimes U  $ and $w_2^2U \otimes w_1^2U   + w_2^3U \otimes U$. The class $U\otimes U \in H^3( MU_1 \smsh MO_1)$. All classes of   of degree $* \leq 5$ in $H^*(\Sigma^{-3}  MU_1 \smsh MO_1)$ are contained in this submodule. }
\label{fig:MU1smshMO1}
\end{figure}
\end{ex}

\begin{ex}
In this example, we compute part of the structure of $H^*(BO_3)$ as a module over $\A_1$. We recall from \fullref{sec:SWclasses} that
\[H^*(BO_3;\Z/2) \cong \Z/2 [w_1,w_2,w_3]\]
and using the Wu formula, we compute that
\begin{align*}
Sq^1(w_1)&= w_1^2 & Sq^1(w_2) &= w_1w_2+ w_3 & Sq^1(w_3) &= w_1w_3  & \\
Sq^2(w_1) &=0 &  Sq^2(w_2) &= w_2^2 &Sq^2(w_3) &= w_2w_3.
\end{align*}
With the Cartan formula, this determines all of the operations for $\A_1$ on $H^*(BO_3)$. For example,
\begin{align*}
Sq^2(Sq^1(w_2)) &= Sq^2(w_1w_2)+ Sq^2(w_3) \\
& = Sq^2(w_1)Sq^0(w_2)+Sq^1(w_1)Sq^1(w_2) + Sq^0(w_1)Sq^2(w_2) +Sq^2(w_3)  \\
&= w_1^2Sq^1(w_2) + w_1w_2^2+w_2w_3   \\
&= (w_1^2+w_2)Sq^1(w_2).
\end{align*}
We let 
\begin{align}\label{eq:namex}
x&= Sq^1(w_2) =w_1w_2+ w_3, &
\w_2&= w_1^2+w_2.
\end{align} 
A part of the cell diagram for $H^*(BO_3)$ is depicted in \fullref{fig:HBO3}.
\end{ex}

\begin{ex}\label{ex:MO3}
To compute the structure of $H^*(MO_3)$ as a module over $\A_1$, we use the Thom isomorphism and \fullref{rem:twoforms}. The former gives the identification 
\[H^*(MO_3) \cong  \Z/2 [w_1, w_2,w_3]\{U\}\]
where the Thom class $U$ is in $H^3(MO_3)$. \fullref{rem:twoforms} allows us to compute the action of $\A_1$ on $H^*(MO_3)$ and the result is illustrated in \fullref{fig:HMO3}. For example, 
\begin{align*}
Sq^2(w_2U) &= Sq^2(w_2)U + Sq^1(w_2)Sq^1(U) + w_2 Sq^2(U) \\
&= w_2^2U+xw_1U+w_2^2U =w_1xU.
\end{align*}
A few other relations are given by
\begin{align*}
Sq^1(U) &=w_1U & Sq^1(w_1U)&=0 & Sq^1(w_2U)&=w_3 U  & Sq^1(w_3U)&=0   \\
Sq^2( U)&=w_2U     & Sq^2(w_1U) &= w_1\w_2U  & Sq^2(w_2U) &= w_1xU  & Sq^2(w_3U) &= w_1^2w_3 U .
\end{align*}
\end{ex}

\begin{figure}[ht]
\center
{\tiny
  \begin{tikzpicture}[scale = .6]
        \fill (0,1) circle (3pt) node[anchor=east]  {$w_1$} ;
          \sqone(0,1,black); 
        \fill (0,2) circle (3pt) node[anchor=east] {$w_1^2$};
        \sqtwoR(0,2,black);
           \fill (0,3) circle (3pt) node[anchor=east] {$w_1^3$};
             \sqone(0,3,black); 
             \sqtwoL(0,3,black);
               \fill (0,4) circle (3pt) node[anchor=east] {$w_1^4$} ;
                 \fill (0,5) circle (3pt) node[anchor=east] {$w_1^5$} ;
                   \sqone(0,5,black); 
                       \fill (0,6) circle (3pt) node[anchor=east] {$w_1^6$} ;
                             \sqtwoL(0,6,black);
                            \fill (0,7) circle (3pt) node[anchor=east] {$w_1^7$} ;
                               \sqone(0,7,black); 
                               \sqtwoR(0,7,black);
                                   \fill (0,8) circle (3pt) node[anchor=east] {$w_1^8$} ;
                                    \fill (0,9) circle (3pt) node[anchor=east] {$w_1^9$} ;
                                     \sqone(0,9,black);

    \fill (4,2) circle (3pt) node[anchor=east] {$w_2$} ;
        \sqone(4,2,black); 
          \sqtwoL(4,2,black); 
       \fill (4,3) circle (3pt) node[anchor=east] {$x$} ;
           \sqtwoR(4,3,black)
           \fill (4,4) circle (3pt) node[anchor=east] {$w_2^2$} ;
                 \sqtwoL(4,4,black); 
             \fill (4,5) circle (3pt) node[anchor=east] {$x\w_2$} ;
              \sqone(4,5,black); 
                \fill (4,6) circle (3pt) node[anchor=east] {$x^2$} ;
    \fill (8,3) circle (3pt) node[anchor=east] {$w_3$} ;
        \sqone(8,3,black); 
         \sqtwoL(8,3,black); 
            \fill (8,4) circle (3pt) node[anchor=west] {$w_1w_3$} ;
            \sqtwoCR(8,4,black);
                   \fill (8,5) circle (3pt) node[anchor=east] {$w_2w_3$} ;
                      \sqone(8,5,black); 
            \sqtwoCR(8,5,black);
               \fill (8,6) circle (3pt) node[anchor=east] {$w_3^2$} ;
                   \sqtwoL(8,6,black);
               \fill (8,7) circle (3pt) node[anchor=east] {$w_1w_3^2$} ;
                 \sqone(8,7,black); 
                 \sqtwoR(8,7,black);
                \fill (8,8) circle (3pt) node[anchor=east] {$w_1^2w_3^2$} ;
                 \fill (8,9) circle (3pt) node[anchor=east] {$w_1^3w_3^2$} ;
                  \sqone(8,9,black);

               \fill (10,6) circle (3pt) node[anchor=east] {$w_1w_3\w_2$} ;
                   \sqone(10,6,black); 
                 \fill (10,7) circle (3pt) node[anchor=east] {$w_1w_3x$} ;
                 
 \fill (14,4) circle (3pt) node[anchor=east] {$w_1^2w_2$} ;
\sqone(14,4,black); 
\sqtwoL(14,4,black);
 \fill (14,5) circle (3pt) node[anchor=east] {$w_1^2x$} ;
 \sqtwoCR(8,5,black);
\fill (14,5) circle (3pt) node[anchor=east] {$w_1^2x$} ;
\fill (14,6) circle (3pt) node[anchor=east] {$w_1^2w_2\w_2$} ;
\sqone(14,6,black); 
 \sqtwoCR(14,6,black);
 \fill (14,7) circle (3pt) node[anchor=east] {$w_1^4x$} ;
 \sqtwoCR(14,7,black);
  \fill (16,7) circle (3pt) node[anchor=east] {$w_1^2w_2x$} ;
    \sqone(16,7,black);
       \fill (16,8) circle (3pt) node[anchor=east] {$w_1^2 x^2$} ;
 \sqtwoR(16,8,black);
   \fill (16,9) circle (3pt) node[anchor=east] {$w_1^4x\w_2$} ;
       \sqone(16,9,black);
 \fill (16,10) circle (3pt) node[anchor=east] {$w_1^4x^2$} ;
\end{tikzpicture}}
\caption{The $\A_1$-submodule of $H^*(BO_3)$ generated by $w_1$, $w_2$, $w_3$ and $w_1^2w_2$. The class $w_1 \in H^1(BO_3)$ and $w_2 \in H^2(BO_3)$.}
\label{fig:HBO3}
\end{figure}

\begin{figure}[ht]
\center
{\tiny
 \begin{tikzpicture}[scale = .6]
    \fill (0,0) circle (3pt) node[anchor=north] {$U$} ;
        \sqone(0,0,black); 
         \sqtwoL(0,0,black); 
            \fill (0,1) circle (3pt) ;
            \sqtwoCR(0,1,black);
                   \fill (0,2) circle (3pt)  ;
                      \sqone(0,2,black); 
            \sqtwoCR(0,2,black);
               \fill (0,3) circle (3pt) ;
               \sqtwoL(0,3, black);
               \fill (0,4) circle (3pt)   ;
               \sqone(0,4,black); 
                    \sqtwoR(0,4, black);
               \fill (0,5) circle (3pt) ;
                 \fill (0,6) circle (3pt) ;
                   \sqone(0,6,black); 
                     \fill (0,7) circle (3pt);
                     \sqtwoR(0,7,black);
                        \fill (0,8) circle (3pt) ;
                        \sqone(0,8,black); 
                           \sqtwoL(0,8,black);
                                 \fill (0,9) circle (3pt) ;
                                    \fill (0,10) circle (3pt) ;
                                        \sqone(0,10,black); 
                     
                      \fill (2,3) circle (3pt);
                             \sqone(2,3,black); 
                               \fill (2,4) circle (3pt) ;
   \Aone(12,5) ;
   \Aone(4,2) ;
   \node[anchor=north] at (4,2) {$w_1^2U$} ;
                                     \Aone(8,4) ;
   \node[anchor=north] at (8,4) {$w_2^2U$} ;       
      \node[anchor=north] at (12,5) {$w_2 w_3U$} ;           
\end{tikzpicture}}
\caption{The $\A_1$-submodule of $H^*(MO_3)$ generated by the classes $U$, $w_1^2U$, $w_2^2U$ and $w_2w_3U$. The class $U \in H^3(MO_3)$. This submodule contains all cohomology classes in $H^*(\Sigma^{-3}MO_3)$ of degree $*\leq 5$.}
\label{fig:HMO3}
\end{figure}

\begin{ex}\label{ex:MTO3}
We turn to the computation of part of the structure of the cohomology of $MTO_3$ as a module over $\A_1$.
Recall that $MTO_3$ is the Thom space for the virtual bundle $-\gamma_3$ over $BO_3$. Again, we have a Thom isomorphism
\[H^*(MTO_3) \cong  \Z/2 [w_1, w_2,w_3]\{\U\} \]
where the Thom class $\U= U(-\gamma_3)$ is in degree $-3$. However, here $w_i = w_i(\gamma_3)$, the Stiefel--Whitney classes of the universal bundle. Let $\w_i = w_i(-\gamma_3)$.
To compute the Steenrod operations using the formula
\[Sq^i(\U) = \w_i  \U,\]
of \fullref{defn:SW}, we need a formula for the $\w_i$s in terms of the $w_i$s. Letting $w=w(\gamma_3)$ and $\w = w(-\gamma_3)$ be the total Stiefel--Whitney classes, \fullref{rem:totalSW} gives an identity
\begin{align*}
\w &=w^{-1}= \frac{1}{1+w_1+w_2+w_3} = \sum_{i\geq 0} (w_1+w_2+w_3)^i .
\end{align*}
Collecting the terms of the same degree, we get that
\begin{align*}
\w_1 &= w_1 \\
\w_2 &= w_1^2+w_2  .
\end{align*}
Therefore, a few relations are given by
\begin{align*}
Sq^1(\U) &=w_1\U, &Sq^1(w_1\U)&= 0 & Sq^1(w_2\U)&= w_3 \U & Sq^1(w_3\U)&= 0  \\
Sq^2( \U)&=(w_1^2+w_2)\U  & Sq^2(w_1\U) &= w_1w_2 \U & Sq^2(w_2\U) &= w_1w_3 \U    & Sq^2(w_3\U) &=   0. 
\end{align*}
A part of the cell diagram for $H^*(MTO_3)$ is depicted in \fullref{fig:HMTO3}.
\begin{figure}[ht]
\center
 {\tiny \begin{tikzpicture}[scale = .6]
  
 \Aone(0,0);
    \node[anchor=north] at (0,0) {$\U$} ;

         \fill (4,2) circle (3pt) ;
             \sqone(4,2,black); 
             \sqtwoL(4,2,black);
               \fill (4,3) circle (3pt)  ;
                 \fill (4,4) circle (3pt)  ;
                   \sqone(4,4,black); 
                       \fill (4,5) circle (3pt) ;
                             \sqtwoL(4,5,black);
                            \fill (4,6) circle (3pt)  ;
                               \sqone(4,6,black); 
                               \sqtwoR(4,6,black);
                                   \fill (4,7) circle (3pt)   ;
                                    \fill (4,8) circle (3pt) ;
                                     \sqone(4,8,black); 
                                    \fill (4,9) circle (3pt) ;
                                    \sqtwoR(4,9,black); 
                                      \fill (4,10) circle (3pt) ;
                                      \sqone(4,10,black); 
                                        \fill (4,11) circle (3pt) ;
                                            \sqtwoL(4,10,black); 
                                            
                                          \fill (4,12) circle (3pt) ;
                                                \sqone(4,12,black); 
    \node[anchor=north] at (4,2) {$w_2\U$} ;

  \Aone(8,4);
    \node[anchor=north] at (8,4) {$w_2^2\U$} ;

  \Aone(12,4);
    \node[anchor=north] at (12,4) {$w_1^4\U$} ;

         \fill (16,6) circle (3pt) ;
             \sqone(16,6,black); 
             \sqtwoL(16,6,black);
               \fill (16,7) circle (3pt)  ;
                 \fill (16,8) circle (3pt)  ;
                   \sqone(16,8,black); 
                       \fill (16,9) circle (3pt) ;
                             \sqtwoL(16,9,black);
                            \fill (16,10) circle (3pt)  ;
                               \sqone(16,10,black); 
                               \sqtwoR(16,10,black);
                                   \fill (16,11) circle (3pt)   ;
                                    \fill (16,12) circle (3pt) ;
                                     \sqone(16,12,black);

   \fill (18,5) circle (3pt) ;
   \sqtwoCL(18,5, black);
   \sqone(18,5,black);
      \fill (18,6) circle (3pt) ;
         \sqtwoR(18,6, black);
           \fill (18,8) circle (3pt) ;
              \sqone(18,8,black);
                \fill (18,9) circle (3pt) ;
                
                   \sqtwoCR(16,7, black);
                
                    \node[anchor=north] at (18,5) {$w_2w_3\U$} ;
                       \node[anchor=north] at (16,6) {$w_2^3\U$} ;
  
    \end{tikzpicture}}
\caption{The $\A_1$-submodule of $H^*(MTO_3)$ generated by the classes $\U$, $w_2\U$, $w_2^2\U$, $w_1^4\U$, $w_2^3 \U $ and $w_2w_3\U$. The class $\U \in H^{-3}(MTO_3)$. This submodule contains all cohomology classes in $H^*(\Sigma^{3} MTO_3)$ of degree $*\leq 5$.}
\label{fig:HMTO3}
\end{figure}
\end{ex}

\begin{exc}
Use the formulas of \fullref{ex:MO3} and \fullref{rem:twoforms} to compute that the $\A_1$-submodule of $H^*(MO_3)$ generated by $U$, $w_1^2U$ and $w_2^2U$ has the structure depicted in \fullref{fig:HMO3}. Do the same thing for \fullref{fig:HMTO3} using the results of \fullref{ex:MTO3}.
\end{exc}

\begin{ex}
In this example, we compute the structure of $H^*(MSO_3)$ as a module over $\A_1$. Let $\iota \colon MSO_3 \to MO_3$ be the map of Thom spectra induced by the inclusion of $SO_3$ into $O_3$. The induced map $ \iota^* \colon H^*(MO_3)  \to H^*(MSO_3)$ is given by moding out $w_1$. The Thom class of $\gamma_3$ maps to that of the universal bundle on $BSO_3$. We get an isomorphism
\[H^*(MSO_3) \cong  \Z/2 [w_2,w_3]\{U\} .\]
Further, the Steenrod operations are natural with maps of spaces or spectra, so $Sq^k\iota^* =\iota^* Sq^k$.

We use that $x \equiv w_3 \mod (w_1)$ for $x$ as in \eqref{eq:namex}. We get the following formulas from \fullref{ex:MO3}. First, in the cohomology of $BSO_3$, we have 
\begin{align*}
Sq^1(w_2)&= w_3  & Sq^1(w_3) &= 0  & Sq^2(w_2)&= w_2^2 & Sq^2(w_3) &=w_2w_3   .
\end{align*}
So, in the cohomology of $MSO_3$, we have
\begin{align*}
 Sq^1(U) &=0 & Sq^1(w_2U)&=w_3 U  & Sq^1(w_3U)&=0   \\
 Sq^2( U)&=w_2U  & Sq^2(w_2U) &= 0  & Sq^2(w_3U) &= 0 .
\end{align*}

\begin{figure}[ht]
\center
{\tiny
 \begin{tikzpicture}[scale = .6]
    \fill (0,0) circle (3pt) node[anchor=west] {$U$} ;
    \sqtwoL(0,0,black);
       \fill (0,2) circle (3pt) node[anchor=west] {$w_2 U$} ;
         \sqone(0,2,black);
       \fill (0,3) circle (3pt) node[anchor=west] {$w_3 U$} ;
       
            \fill (2,4) circle (3pt) node[anchor=west] {$w_2^2 U$} ;
                      \sqtwoCR(2,4,black);
         \fill (4,6) circle (3pt) node[anchor=west] {$(w_3^2+w_2^3) U$} ;
                   \sqone(4,6,black);
                \fill (4,7) circle (3pt) node[anchor=west] {$w_2^2w_3 U$} ;
                    \sqtwoR(4,7,black);
                    \fill (4,9) circle (3pt) node[anchor=west] {$w_3^3 U$} ;
             
              \fill (2,5) circle (3pt) node[anchor=east] {$w_2w_3 U$} ;
                  \sqone(2,5,black);
                  \sqtwoCR(2,5,black);
                \fill (2,6) circle (3pt) node[anchor=east] {$w_3^2 U$} ;
                  \sqtwoCR(2,6,black);
                   \fill (4,8) circle (3pt) node[anchor=east] {$w_2w_3^2 U$} ;
                       \sqone(4,8,black);
                       
                           \fill (8,8) circle (3pt) node[anchor=west] {$w_2^4U$} ;
    \sqtwoL(8,8,black);
       \fill (8,10) circle (3pt) node[anchor=west] {$w_2^5 U$} ;
         \sqone(8,10,black);
       \fill (8,11) circle (3pt) node[anchor=west] {$w_2^4w_3 U$} ;
             
\end{tikzpicture}}
\caption{The $\A_1$-submodule of $H^*(MSO_3)$ generated by the classes $U$, $w_2^2U$, $w_2w_3U$ and $w_2^4U$. The class $U \in H^3(MSO_3)$. This submodule contains all cohomology classes in $H^*(\Sigma^{-3}MSO_3)$ of degree $*\leq 5$.}
\label{fig:HMSO3}
\end{figure}

\end{ex}


\section{The Adams spectral sequence}\label{sec:ass}
One of the most effective methods for computing stable homotopy groups is the Adams spectral sequence. The idea is roughly as follows. Take a space or a spectrum $X$ and resolve it into pieces whose homotopy we understand. The Eilenberg--MacLane spectra are good candidates --- they are constructed to have homotopy in a single degree. Then, reconstruct the stable homotopy groups of $X$ from algebraic data associated to this resolution.

We will make this more precise and give a sketch of the construction of the Adams spectral sequence. In the cases of interest, it has the form
\begin{equation}\label{eq:ASSdisplay}
E_2^{s,t} = \Ext_{\A}^{s,t}(H^*(X), \Z/2 ) \Longrightarrow (\pi_{t-s}X)_2^{\wedge}\end{equation}
We will explain the terms in \eqref{eq:ASSdisplay} throughout this section. We begin by defining $\Ext_{\A}$ and giving tools to compute it.

\subsection{Computing $\Ext$ over the Steenrod algebra}\label{sec:compext}
Let $\cB$ be a graded ring. For any $\cB$-module $M$ and $r \in \Z$, let $\Sigma^r M = M[r]$ be the graded $\cB$-module given in degree $t$ by
\[(\Sigma^rM)^t  = (M[r])^t =M^{t-r}. \]
Let $\Hom_{\cB}^*(M, N)$ be the graded abelian group given in degree $t$ by
\[\Hom_{\cB}^t(M, N)  = \Hom_{\cB}(M, \Sigma^t N).\]
The contravariant functor
\[\Hom_{\cB}^*(-, N) \colon \cB\text{-}\Mod \to \Ab  \] 
is left exact and has right derived functors $\Ext^s_{\cB}(-,N)$. We let
\[\Ext^{s,t}_{\cB}(-,N) = (\Ext^s_{\cB}(-,N))^t \]
and treat $\Ext_{\cB}^{*,*}(-,N)$ as a functor with values in bi-graded abelian groups.  As always, the value of these functors on a $\cB$-module $M$ can be computed by choosing a resolution $P_{\bullet}$ of $M$ by projective $\cB$-modules and forming the cochain complex $\Hom_{\cB}^*(P_\bullet, N)$. Then 
\[ \Ext^{s,t}_{\cB}(M,N) = H^s(\Hom_{\cB}^t(P_\bullet, N) ).\]

A useful tool is the interpretation of elements in $\Ext^{s,t}_{\cB}(M,N)$ as equivalence classes of extensions when $s\geq 1$. That is, an element of $\Ext^{s,t}_{\cB}(M,N)$ is an exact complex, or \emph{extension},
\[ \xymatrix{ 0 \ar[r] & \Sigma^t N \ar[r] & P_1 \ar[r] & \ldots  \ar[r] & P_s \ar[r] & M \ar[r] & 0  }  \]
where two extensions are equivalent if there exists a commutative diagram
\[ \xymatrix{ 0 \ar[r] &\Sigma^{t}  N \ar[d]^-{\id_N}  \ar[r] & P_1 \ar[d]^-{\cong}   \ar[r] & \ldots  \ar[r] & P_s \ar[d]^-{\cong}  \ar[r] &  M \ar[r] \ar[d]^-{\id_M}  & 0 \\
0 \ar[r] & \Sigma^{t}   N \ar[r] & P_1'  \ar[r] & \ldots  \ar[r] & P_s' \ar[r] &  M \ar[r] & 0 .  }  \]

\begin{ex}\label{ex:h0h1def}
The class in $\Ext_{\A}^{1,1}(\Z/2 ,\Z/2 )$ represented by the extension
\[ 0 \to \Sigma \Z/2  \to \Sigma^{-1} H^*(\R P^2) \to \Z/2  \to 0,\]
which is depicted in \fullref{fig:h0}, is called $h_0$.  The class in $\Ext_{\A}^{1,2}(\Z/2 ,\Z/2 )$ represented by the extension
\[ 0 \to \Sigma^{2}\Z/2  \to \Sigma^{-2} H^*(\C P^2) \to \Z/2  \to 0,\]
which is depicted in \fullref{fig:h1}, is called $h_1$.

\begin{figure}[ht]
{\tiny
\center
 \begin{tikzpicture}[scale = .75]
   \fill (0,1) circle (3pt) ;   
   \fill (4,0) circle (3pt) ;
            \sqone(4,0,black);
      \fill (4,1) circle (3pt) ;
            \fill (8,0) circle (3pt) ;
 \draw[blue, ->] (0,1) -- (4,1);
  \draw[blue, ->] (4,0) -- (8,0);
               \node[anchor=north] at (0,1) {$\Sigma \Z/2 $} ;
               \node[anchor=north] at (4,0) {$ \Sigma^{-1} H^*(\R P^2)$} ;
                  \node[anchor=north] at (8,0) {$\Z/2 $} ;
 \end{tikzpicture}} 
\caption{The extension representing $h_0$ in $\Ext_{\A}^{1,1}(\Z/2 ,\Z/2 )$.}
\label{fig:h0}
\center
{\tiny
  \begin{tikzpicture}[scale = .75]
   \fill (0,2) circle (3pt) ;   
   \fill (4,0) circle (3pt) ;
            \sqtwoR(4,0,black);
      \fill (4,2) circle (3pt) ;
            \fill (8,0) circle (3pt) ;
 \draw[blue, ->] (0,2) -- (4,2);
  \draw[blue, ->] (4,0) -- (8,0);
               \node[anchor=north] at (0,2) {$\Sigma^2 \Z/2 $} ;
               \node[anchor=north] at (4,0) {$ \Sigma^{-2} H^*(\C P^2)$} ;
                  \node[anchor=north] at (8,0) {$\Z/2 $} ;
 \end{tikzpicture}} 
\caption{The extension representing $h_1$ in $\Ext_{\A}^{1,2}(\Z/2 ,\Z/2 )$.}
\label{fig:h1}
\end{figure}
\end{ex}

\subsection{Module Structure on $\Ext$}\label{sec:multiplicative}
Let $\cB$ be a sub-Hopf algebra of the Steenrod algebra  $\A$. Then for any $\cB$-module $M$, there is a map
\[  \Ext_{\cB}^{s,t}(M,\Z/2 ) \otimes_{\Z/2} \Ext_{\cB}^{s',t'}(\Z/2 ,\Z/2 )  \to \Ext_{\cB}^{s+s',t+t'}(M,\Z/2 ) .\]
This is called the \emph{Yoneda product}. It is straightforward to describe the product in terms of extensions. Suppose that $s,s' \geq 1$. Given two extensions 
\begin{equation}
\label{eq:elementM}
 \xymatrix{ 0 \ar[r] & \Sigma^t \Z/2  \ar[r] & P_1 \ar[r] & \ldots  \ar[r] & P_s \ar[r] & M \ar[r] & 0  }  
  \end{equation}
and
\begin{equation}
\label{eq:element}\xymatrix{ 0 \ar[r] & \Sigma^{t'} \Z/2  \ar[r] & Q_1 \ar[r] & \ldots  \ar[r]^-{\varphi_{s'}} & Q_{s'} \ar[r] & \Z/2  \ar[r] & 0  ,}  \end{equation}
where \eqref{eq:elementM} represents an element of  $\Ext_{\cB}^{s,t}(M,\Z/2 ) $ and \eqref{eq:element} an element of $\Ext_{\cB}^{s',t'}(\Z/2 ,\Z/2 ) $, 
we can splice the complexes to obtain an extension of length $s+s'$:
\[ \xymatrix@-1pc{ 0 \ar[r] & \Sigma^{t'+t} \Z/2  \ar[r] & \Sigma^t Q_1 \ar[r] & \ldots  \ar[r] & \Sigma^t  Q_{s'} \ar[rr] \ar[dr]  & & P_1 \ar[r] & \ldots  \ar[r] & P_s \ar[r] & M \ar[r] & 0  \\
 & & & &   & \Sigma^t \Z/2 \ar[ur] &  &     }  \]
 which represents the product in $ \Ext_{\cB}^{s+s',t+t'}(M,\Z/2 )$.
This defines the module structure for elements of degree $s\geq 1$ in $\Ext_{\cB}^{s,t}(M,\Z/2 )$. If $s=0$, then given a homomorphism $ M \to \Sigma^t \Z/2 $ in 
\[\Ext_{\cB}^{0,t}(M, \Z/2 ) \cong \Hom_{\cB}(M, \Sigma^t \Z/2)\] 
and an element of $\Ext_{\cB}^{s',t'}(\Z/2 ,\Z/2 )$ represented by \eqref{eq:element}, we obtain an element in $\Ext_{\cB}^{s',t+t'}(M, \Z/2 )$ represented by
\[\xymatrix@-0.5pc{0 \ar[r] & \Sigma^{t'} \Z/2  \ar[r] & Q_1 \ar[r] & \ldots  \ar[r]  & \Sigma^t Q_{s'-1} \ar[r] &  \Sigma^tQ_{s'} \times_{ \Sigma^{t}\Z/2 } M  \ar[r] & M \ar[r] & 0  } \]
where $\Sigma^t Q_{s'} \times_{ \Sigma^{t}\Z/2 } M  $ is the pull-back of $\A_1$-modules. There is a commutative diagram of exact sequences
\[\xymatrix{ 0 \ar[r] & \ker( \Sigma^t\varphi_{s'}) \ar[r] \ar[d] &  \Sigma^t Q_{s'} \times_{ \Sigma^{t}\Z/2 } M  \ar[d] \ar[r]  & M \ar[d] \ar[r] & 0 \\ 
 0 \ar[r] & \ker( \Sigma^t\varphi_{s'}) \ar[r] &   \Sigma^t Q_{s'} \ar[r] &\Sigma^t \Z/2  \ar[r]  & 0 }\]
 so that we really do get an exact complex. An example when $\cB =\A_1$ is given in \fullref{fig:extmultforhom}.
 
 \begin{figure}[ht]
 \begin{tikzpicture}[scale = .5]
            \fill (-6,0) circle (3pt) ;
             \fill (-6,2) circle (3pt) ;
                      \sqtwoR(-6,0,black);
                      \fill (-4,-0) circle  (3pt) ;
   \draw[blue, ->] (-6,0) -- (-4,0);
   \node[anchor=north] at (-5,-0.5) {$\A_1 /\!\! /\E_1 \to \Z/2 $} ;
      \node[anchor=north] at (-5,-2) {$\varphi$} ;
 
   \fill (0,1) circle (3pt) ;   
   \fill (4,0) circle (3pt) ;
            \sqone(4,0,black);
            \sqtwoR(4,0,black);
      \fill (4,1) circle (3pt) ;
         \fill (4,2) circle (3pt) ;
            \fill (8,0) circle (3pt) ;
             \fill (8,2) circle (3pt) ;
                      \sqtwoR(8,0,black);
 \draw[blue, ->] (0,1) -- (4,1);
  \draw[blue, ->] (4,0) -- (8,0);
  
     \node[anchor=north] at (4,-0.5) { $\Sigma \Z/2  \to  \A_1 /\!\! /\E_1 \times_{\Z/2 }\Sigma \Z/2  \to \A_1 /\!\! /\E_1 $} ;
        \node[anchor=north] at (4,-2) {$\varphi h_0$} ;
  
 \end{tikzpicture}
 \caption{A representative extension for the element $ \varphi h_0$ in $\Ext_{\A_1}^{1,1}( \A_1 /\!\! /\E_1, \Z/2 )$ where the element $\varphi \colon \A_1 /\!\! /\E_1 \to \Z/2 $ of $\Ext_{\A_1}^{0,0}( \A_1 /\!\! /\E_1, \Z/2 )$ is the map which sends the element of degree two to zero.}
 \label{fig:extmultforhom}
 \end{figure}

\subsection{Adams Charts}\label{sec:achart}
For $\cB$ a sub-Hopf algebra of $\A$, we depict the information contained in $ \Ext_{\cB}^{s,t}(M, \Z/2 )$ in a picture which we call an \emph{Adams chart}. See \fullref{fig:achart}. 
An Adams chart is an illustration of $ \Ext_{\cB}^{s,t}(M, \Z/2 )$ in the $(t-s,s)$-plane. A generator for a copy of $\Z/2 $ in $ \Ext_{\cB}^{s,t}(M, \Z/2 )$ is denoted by a $\bullet$. Multiplication by $h_0$ is recorded by drawing a vertical line between two classes and multiplication by $h_1$ by a line of slope $(1,1)$. An infinite string of classes connected by multiplications by $h_0$ is called an \emph{$h_0$-tower}. Note that the Adams chart for $\Sigma^rM$ is the same as that of $M$, but horizontally shifted to the right by $r$.
\begin{figure}[ht]
\center
 {\tiny
  \begin{tikzpicture}[scale=.5]
    \draw[step=1cm,gray,very thin] (0,0) grid (4,4);
    \fill (0.5,0.5) circle (3pt);
        \draw (0.5,0.5) -- (0.5,1.5);
           \fill (0.5,1.5) circle (3pt);
                  \draw (0.5,1.5) -- (0.5,2.5);
           \fill (0.5,2.5) circle (3pt);
                    \draw (0.5,2.5) -- (0.5,3.5);
           \fill (0.5,3.5) circle (3pt);
                \draw (0.5,3.5) -- (0.5,4);
                
                  \fill (1.5,1.5) circle (3pt);
                     \draw (0.5,0.5) -- (1.5,1.5);
                         \fill (2.5,2.5) circle (3pt);
                     \draw (1.5,1.5) -- (2.5,2.5);

  \node[anchor=west] at (0.5,0.5) {$x$} ;
    \node[anchor=east] at (0.5,1.5) {$h_0x$} ;
        \node[anchor=west] at (1.5,1.5) {$h_1x$} ;
                  
  \node at (-0.5,3.5) {$s$} ;
  \node at (3.5,-0.5) {$t-s$} ;
 \end{tikzpicture}} 
\caption{An example of an Adams chart for $\Ext_{\cB}^{s,t}(M,\Z/2 )$.}
\label{fig:achart}
\end{figure}

\subsection{Minimal Resolutions}\label{sec:MinRes}
Let $\cB$ be a sub-Hopf algebra of $\A$. Recall that $\A$ is an augmented algebra with $\A_0 = \Z/2 $. So this holds for any of its subalgebras. We let $I(\cB)$ be the kernel of the augmentation of $\cB$. Note that for any $\cB$-module $P$ and $\Z/2 $ the trivial $\cB$-module, the map
\[   \Hom_{\cB}^*(P, \Z/2  ) \to \Hom_{\cB}^*(I(\cB)P, \Z/2 )  \]
induced by the inclusion $I(\cB)P \hookrightarrow P$ is zero.
So, if $P_{\bullet}$,
\[\ldots \to P_s \xra{f_s} P_{s-1} \to \ldots \to P_0 \to M\] 
is a projective resolution of $M$ which satisfies
\[f_s(P_s) \subseteq I(\cB)P_{s-1},\]
then the maps in the cochain complex $\Hom_{\cB}^*(P_{\bullet},\Z/2 )$ are trivial and it follows that
\[ \Ext_{\cB}^{s,t}(M, \Z/2 ) \cong \Hom_{\cB}^t(P_s, \Z/2 ).\]
Such a resolution is called a \emph{minimal resolution}. 

If $M$ is a $\cB$-module which is bounded below, then $M$ has a minimal resolution by free $\cB$-modules. In such a resolution $P_{\bullet} \to M$, the $P_s$ are direct sums of suspensions of $\cB$ and $\Ext_{\cB}^{s,t}(M, \Z/2)$ is a product of $\Z/2$s indexed over the summands $\Sigma^t \cB \subseteq P_s$. If there are finitely many of these, the product is isomorphic to a direct sum and each summand corresponds to a generator in $\Ext_{\cB}^{s,t}(M, \Z/2)$.

If $\cB$ and $M$ are small, these are straightforward to construct and we do a few examples here in the case when $\cB=\A_1$.

\begin{rem}\label{rem:range}
Let $\cB$ a sub-Hopf algebra of $\A$ and $M$ a graded $\cB$-module of finite type which is zero in degrees $t<n$.
Using a free minimal resolution of $M$ to compute $\Ext_{\cB}^{s,t}(M, \Z/2 )$, one deduces that $\Ext_{\cB}^{s,t}(M, \Z/2 ) =0$ for $t-s<n$. 
\end{rem}

\begin{ex}\label{ex:M1}
We begin by constructing a resolution of the $\A_1$-module $M_0= \A_1  /\!\!/  \E_0$, where $\E_0$ is the subalgebra generated by $Sq^1$. It is depicted below. This example is also treated by a different method in \fullref{ex:corM1}. The module $M_0$ has a periodic minimal resolution of the form
\begin{equation}\label{eq:resM} \xymatrix{ M_0  & \A_1 \ar[l] & \Sigma \A_1 \ar[l] &  \Sigma^2 \A_1  \ar[l] & \ldots \ar[l] } \end{equation}
See \fullref{fig:minM0}. The horizontal (blue) arrows indicate the maps in \eqref{eq:resM}. The circled  (in red) classes are in the kernel. We have redrawn the kernels to the right (in red) to make the next map easier to visualize. The duals of the boxed classes (in blue) will form a basis of $\Ext_{\A_1}^{*,*}(M_0, \Z/2 )$. Recall that $h_0$ was defined in \fullref{ex:h0h1def}. See also \fullref{fig:h0}. The class in $\Ext^{1,1}_{\A_1}(M_0, \Z/2 )$ is the $h_0$ multiple of the class in $\Ext^{0,0}_{\A_1}(M_0, \Z/2 )$. This is read off of the part of \fullref{fig:minM0} that has been framed (in gray).

The $\A_1$-module $M_0$ is not the restriction of any $\A$-module, but it has such a nice projective resolution that it is often used as a tool to compute resolutions for other modules. This will be explained below.
There are larger versions of the module $M_0$ that we will denote by $M_n$ obtained by stringing together copies of $M_0$, including the case $n = \infty$.
For example, $M_1$ is drawn in \fullref{fig:M1}. These all have periodic minimal resolutions. For example,
\begin{equation*} \xymatrix{ M_1  & \A_1 \oplus \Sigma^4\A_1 \ar[l] & \Sigma(  \A_1 \oplus \Sigma^4\A_1 )  \ar[l] &  \Sigma^2 ( \A_1 \oplus \Sigma^4\A_1 )  \ar[l] & \ldots \ar[l] } \end{equation*}
The Adams chart of $M_n$ has $h_0$-towers starting in $(4k,0)$ for $0 \leq k \leq n$. For example, the Adams chart for $M_1$ is depicted in \fullref{fig:M1}.

\begin{figure}[ht]
  \begin{minipage}{.4\textwidth}
    \centering \captionsetup{width=0.9\linewidth}
 \begin{tikzpicture}[scale = .6]
   \fill (0,0) circle (3pt) ;
   \sqtwoR(0,0,black);
      \fill (0,2) circle (3pt) ;
         \sqone(0,2,black);
      \fill (0,3) circle (3pt) ;
   \sqtwoR(0,3,black);
         \fill (0,5) circle (3pt) ;
  \end{tikzpicture}
 \caption{The $\A_1$ module $M_0$.}
 \label{fig:M0}
 \end{minipage}
  \begin{minipage}{.55\textwidth}
    \centering
    \centering \captionsetup{width=0.9\linewidth}
  \begin{tikzpicture}[scale=.5]
    \draw[step=1cm,gray,very thin] (0,0) grid (8,7);
    \fill (0.5,0.5) circle (3pt);
        \draw (0.5,0.5) -- (0.5,1.5);
           \fill (0.5,1.5) circle (3pt);
                  \draw (0.5,1.5) -- (0.5,2.5);
           \fill (0.5,2.5) circle (3pt);
                    \draw (0.5,2.5) -- (0.5,3.5);
           \fill (0.5,3.5) circle (3pt);
                        \draw (0.5,3.5) -- (0.5,4.5);
           \fill (0.5,4.5) circle (3pt);
                            \draw (0.5,4.5) -- (0.5,5.5);
           \fill (0.5,5.5) circle (3pt);
                           \draw (0.5,5.5) -- (0.5,6.5);
           \fill (0.5,6.5) circle (3pt);
                \draw (0.5,6.5) -- (0.5,7);
 \end{tikzpicture}
 \caption{The Adams chart for $\Ext_{\A_1}^{s,t}(M_0, \Z/2 )$.}
 \label{fig:M0achart}
 \end{minipage}
\end{figure}

\begin{figure}[ht]
\center
\begin{tikzpicture}[scale=.5] 
   \fill (0,0) circle (3pt) ;
   \sqtwoR(0,0,black);
      \fill (0,2) circle (3pt) ;
         \sqone(0,2,black);
      \fill (0,3) circle (3pt) ;
   \sqtwoR(0,3,black);
         \fill (0,5) circle (3pt) ;
         
           \Aone (4, 0);
             \draw[red] (4, 1) circle (5pt);
  \draw[red] (6, 3) circle (5pt);
  \draw[red] (6, 4) circle (5pt);
  \draw[red] (6, 6) circle (5pt);
    \draw[blue, ->] (4,0) -- (0, 0);
    \draw[blue, ->] (4,2) -- (0,2);
        \draw[blue, ->] (4,3) -- (0,3);
              \draw[blue, ->] (6,5) -- (0,5);
           
        \fill[red] (8,1) circle (3pt) ;
   \sqtwoR(8,1,red);
      \fill[red] (8,3) circle (3pt) ;
         \sqone(8,3,red);
      \fill[red] (8,4) circle (3pt) ;
   \sqtwoR(8,4,red);
         \fill[red] (8,6) circle (3pt) ;      
         
\Aone (12, 1);
\draw[red] (12, 2) circle (5pt);
\draw[red] (14, 4) circle (5pt);
\draw[red] (14, 5) circle (5pt);
\draw[red] (14, 7) circle (5pt);
\draw[blue, ->] (12,1) -- (8, 1);
\draw[blue, ->] (12,3) -- (8,3);
\draw[blue, ->] (12,4) -- (8,4);
\draw[blue, ->] (14,6) -- (8,6);
              
\fill[red] (16,2) circle (3pt) ;
\sqtwoR(16,2,red);
\fill[red] (16,4) circle (3pt) ;
\sqone(16,4,red);
\fill[red] (16,5) circle (3pt) ;
\sqtwoR(16,5,red);
\fill[red] (16,7) circle (3pt) ;     
         
\rectangle(4,0,blue);
\rectangle(12,1,blue);
\node[anchor=north] at (0,-1) {$-1$} ;
\node[anchor=north] at (4,-1) {$0$} ;
\node[anchor=north] at (12,-1) {$1$} ;

\draw[gray] (-0.5,-0.5) -- (4.5,-0.5);
\draw[gray] (-0.5,-0.5) -- (-0.5,0.5);
\draw[gray] (-0.5,0.5) -- (3.5, 0.5);
\draw[gray] (3.5,0.5) -- (3.5, 1.5);
\draw[gray] (3.5, 1.5) -- (12.5,1.5);
\draw[gray] (12.5,1.5) -- (12.5,0.5);
\draw[gray] (12.5,0.5) --  (4.5,0.5) ;
\draw[gray] (4.5,0.5) --  (4.5,-0.5) ;

  \end{tikzpicture}
\caption{A minimal projective resolution for $M_0 = \A_1  /\!\!/  \E_0$. The horizontal (blue) arrows indicate the maps in the resolution. The circled  (in red) classes are in the kernel. The kernels are redrawn to the right (in red). The duals of the boxed classes (in blue) form a basis of $\Ext_{\A_1}^{*,*}(M_0, \Z/2 )$.}
\label{fig:minM0}
\end{figure}

\begin{figure}[ht]
  \begin{minipage}{.4\textwidth}
    \centering \captionsetup{width=0.9\linewidth}
      \begin{tikzpicture}[scale = .6]
   \fill (0,0) circle (3pt) ;
   \sqtwoR(0,0,black);
      \fill (0,2) circle (3pt) ;
         \sqone(0,2,black);
      \fill (0,3) circle (3pt) ;
   \sqtwoR(0,3,black);
         \fill (0,4) circle (3pt) ;
               \sqone(0,4,black);
                 \sqtwoL(0,4,black);
         \fill (0,5) circle (3pt) ;
               \fill (0,6) circle (3pt) ;
               \sqone(0,6,black) ;
                 \fill (0,7) circle (3pt) ;
                  \sqtwoL(0,7,black);
                          \fill (0,9) circle (3pt) ;
 \end{tikzpicture}
 \caption{The $\A_1$-module $M_1$.}
 \label{fig:M1}
 \end{minipage}
  \begin{minipage}{.55\textwidth}
    \centering
    \centering \captionsetup{width=0.9\linewidth}
  \begin{tikzpicture}[scale=.5]
    \draw[step=1cm,gray,very thin] (0,0) grid (8,8);
    \fill (0.5,0.5) circle (3pt);
        \draw (0.5,0.5) -- (0.5,1.5);
           \fill (0.5,1.5) circle (3pt);
                  \draw (0.5,1.5) -- (0.5,2.5);
           \fill (0.5,2.5) circle (3pt);
                    \draw (0.5,2.5) -- (0.5,3.5);
           \fill (0.5,3.5) circle (3pt);
                        \draw (0.5,3.5) -- (0.5,4.5);
           \fill (0.5,4.5) circle (3pt);
                            \draw (0.5,4.5) -- (0.5,5.5);
           \fill (0.5,5.5) circle (3pt);
                           \draw (0.5,5.5) -- (0.5,6.5);
           \fill (0.5,6.5) circle (3pt);
                                \draw (0.5,6.5) -- (0.5,7.5);
           \fill (0.5,7.5) circle (3pt);
                \draw (0.5,7.5) -- (0.5,8);

                    \fill (4.5,0.5) circle (3pt);
        \draw (4.5,0.5) -- (4.5,1.5);
           \fill (4.5,1.5) circle (3pt);
                  \draw (4.5,1.5) -- (4.5,2.5);
           \fill (4.5,2.5) circle (3pt);
                    \draw (4.5,2.5) -- (4.5,3.5);
           \fill (4.5,3.5) circle (3pt);
                        \draw (4.5,3.5) -- (4.5,4.5);
           \fill (4.5,4.5) circle (3pt);
                            \draw (4.5,4.5) -- (4.5,5.5);
           \fill (4.5,5.5) circle (3pt);
                           \draw (4.5,5.5) -- (4.5,6.5);
           \fill (4.5,6.5) circle (3pt);
                                \draw (4.5,6.5) -- (4.5,7.5);
           \fill (4.5,7.5) circle (3pt);
                \draw (4.5,7.5) -- (4.5,8);
 \end{tikzpicture}
 \caption{The Adams chart for $\Ext_{\A_1}^{s,t}(M_1, \Z/2 )$.}
 \label{fig:M1achart}
 \end{minipage}
\end{figure}

\end{ex}

\begin{ex}
The module $\Z/2 $ has a rather complicated minimal resolution. It is an excellently annoying exercise to work it out. We have illustrated the first two terms of such a resolution in \fullref{fig:minF2}. We will give a different approach in \fullref{ex:periodicF2} to computing the Adams chart for $\Ext_{\A_1}(\Z/2 ,\Z/2 )$ but we include it here in \fullref{fig:F2}.
\begin{figure}[ht]
\center
\begin{tikzpicture}[scale=.5] 
    \fill (0,0) circle (3pt) ;
         \Aone (2, 0);
             \draw[blue,->] (2, 0)  -- (0, 0);
\draw[red] (2,1) circle (5pt);
\draw[red] (2,2) circle (5pt);
\draw[red] (2,3) circle (5pt);
\draw[red] (4,3) circle (5pt);
\draw[red] (4,4) circle (5pt);
\draw[red] (4,5) circle (5pt);
\draw[red] (4,6) circle (5pt);

\msopart(5,1,red);
\Aone(9,2);
\draw[blue,->] (9,2)  .. controls (7, 1.5) .. (5, 2);
\draw[blue,->] (9,3)   .. controls (7, 2.5) ..  (5, 3);
\draw[blue,->] (9,4)   .. controls (8, 3.75) .. (7,4);
\draw[blue,->] (11,5)  .. controls (9, 4.5) ..   (7,5);
\draw[blue,->] (11,6)  .. controls (9, 5.5) ..   (7,6);

\Aone(11,1);
\draw[green,->] (11,1)  .. controls (8, 0.5) ..   (5,1);
\draw[green,->] (11,3)  .. controls (9, 2.5) ..   (7,3);
\draw[green,->] (11,4)  .. controls (9, 4.5) ..   (7,4);
\draw[green,->] (13,6)  .. controls (9.5, 6.5) ..   (7,6);
\draw[red] (9,4) circle (5pt);
\draw[red] (9,5) circle (5pt);
\draw[red] (11,2) circle (5pt);
\draw[red] (11,4) circle (5pt);
\draw[red] (10.8,4)  -- (9.2,4);
\draw[red] (11,6) circle (5pt);
\draw[red] (11,7) circle (5pt);
\draw[red] (11,8) circle (5pt);
\draw[red] (13,4) circle (5pt);
\draw[red] (13,5) circle (5pt);
\draw[red] (13,6) circle (5pt);
\draw[red] (13,7) circle (5pt);
\draw[red] (12.8,6)  -- (11.2,6);

\amme(17,2,red);
       \fill[red] (17,6) circle (3pt);
        \sqtwoCL(17,6,red);
             \sqone(17,6,red);
           \fill[red] (15,8) circle (3pt);
              \fill[red] (15,7) circle (3pt);
                 \fill[red] (15,5) circle (3pt);
                   \sqtwoCR(15,4,red);
                      \sqtwoL(15,5,red);
                    \fill[red] (15,4) circle (3pt);
                    \sqone(15,4,red);
                     \sqone(15,7,red);

\rectangle(2,0,blue);
\rectangle(9,2,blue);
\rectangle(11,1,blue);

\node[anchor=north] at (0,-1) {$-1$} ;
\node[anchor=north] at (2,-1) {$0$} ;
\node[anchor=north] at (10,-1) {$1$} ;
  \end{tikzpicture}
\caption{The beginning of a minimal resolution for the trivial $\A_1$-module $\Z/2$. Circled (red) classes joined by a horizontal line indicate that the sum of the classes are in the kernel.}
\label{fig:minF2}

\bigskip

  \begin{tikzpicture}[scale=.5]
    \draw[step=1cm,gray,very thin] (0,0) grid (12,8);
    \fill (0.5,0.5) circle (3pt);
        \draw (0.5,0.5) -- (0.5,1.5);
           \fill (0.5,1.5) circle (3pt);
                  \draw (0.5,1.5) -- (0.5,2.5);
           \fill (0.5,2.5) circle (3pt);
                    \draw (0.5,2.5) -- (0.5,3.5);
           \fill (0.5,3.5) circle (3pt);
                        \draw (0.5,3.5) -- (0.5,4.5);
           \fill (0.5,4.5) circle (3pt);
                            \draw (0.5,4.5) -- (0.5,5.5);
           \fill (0.5,5.5) circle (3pt);
                           \draw (0.5,5.5) -- (0.5,6.5);
           \fill (0.5,6.5) circle (3pt);
                                \draw (0.5,6.5) -- (0.5,7.5);
           \fill (0.5,7.5) circle (3pt);
                \draw (0.5,7.5) -- (0.5,8);

           \fill (4.5,3.5) circle (3pt);
                        \draw (4.5,3.5) -- (4.5,4.5);
           \fill (4.5,4.5) circle (3pt);
                            \draw (4.5,4.5) -- (4.5,5.5);
           \fill (4.5,5.5) circle (3pt);
                           \draw (4.5,5.5) -- (4.5,6.5);
           \fill (4.5,6.5) circle (3pt);
                                \draw (4.5,6.5) -- (4.5,7.5);
           \fill (4.5,7.5) circle (3pt);
                \draw (4.5,7.5) -- (4.5,8);

           \fill (8.5,4.5) circle (3pt);
                            \draw (8.5,4.5) -- (8.5,5.5);
           \fill (8.5,5.5) circle (3pt);
                           \draw (8.5,5.5) -- (8.5,6.5);
           \fill (8.5,6.5) circle (3pt);
                                \draw (8.5,6.5) -- (8.5,7.5);
           \fill (8.5,7.5) circle (3pt);
                \draw (8.5,7.5) -- (8.5,8);
                
                   \fill (1.5,1.5) circle (3pt);
                        \draw (0.5,0.5) -- (1.5,1.5);
                                  \fill (2.5,2.5) circle (3pt);
                        \draw (1.5,1.5) -- (2.5,2.5);
                
                          \fill (9.5,5.5) circle (3pt);
                        \draw (8.5,4.5) -- (9.5,5.5);
                                  \fill (10.5,6.5) circle (3pt);
                        \draw (9.5,5.5) -- (10.5,6.5);

 \end{tikzpicture}
 \caption{The Adams chart for $\Ext_{\A_1}^{s,t}(\Z/2 , \Z/2 )$.}
\label{fig:F2}
\end{figure}
\end{ex}

\subsection{Change-of-Rings}\label{sec:COR}
Let $\mathcal{B}$ be a subalgebra of $\A$. We defined $\A /\!\!/ \mathcal{B}$ in \fullref{defn:AmmB}. 
\begin{lem}[Shearing Isomorphism]\label{lem:shearing}
Let $\mathcal{B}$ be a sub Hopf-algebra of $\A$. Let $M$ be an $\A$-module. Then there is an isomorphism of $\A$-modules
\[ \A \otimes_{\mathcal{B}} M \cong \mathcal{A}  /\!\!/ \mathcal{B} \otimes_{\Z/2 } M  \]
where the action of $\A$ on $\A \otimes_{\mathcal{B}} M$ is via the left action of $\A$ on itself and the action of $\A$ on $ \mathcal{A}  /\!\!/ \mathcal{B} \otimes_{\Z/2 } M$ is the one described in \fullref{rem:modA}.
\end{lem}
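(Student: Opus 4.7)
The plan is to write down an explicit $\A$-equivariant map by shearing with the coproduct of the Hopf algebra $\A$. Using Sweedler notation $\psi(a)=\sum a_{(1)}\otimes a_{(2)}$ for the coproduct and $\overline{(-)}$ for the projection $\A\to\A/\!\!/\mathcal{B}$, I will define
\[
\sigma \colon \A \otimes_{\mathcal{B}} M \longrightarrow \A/\!\!/\mathcal{B} \otimes_{\Z/2} M,
\qquad
a \otimes m \longmapsto \sum \overline{a_{(1)}} \otimes a_{(2)} m,
\]
and then produce an inverse using the antipode. No cocommutativity of $\A$ or anything particular to the prime $2$ is needed.

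First I would check that $\sigma$ is well-defined on the relative tensor product, i.e.\ that it respects the relation $ab\otimes m=a\otimes bm$ for $b\in\mathcal{B}$. Since $\mathcal{B}\subseteq\A$ is a sub-Hopf algebra, $\psi(b)\in\mathcal{B}\otimes\mathcal{B}$, so I may write $\psi(b)=1\otimes b+b\otimes 1+\sum b'_i\otimes b''_i$ with $b'_i,b''_i\in I(\mathcal{B})$. Expanding $\psi(ab)=\psi(a)\psi(b)$, the difference $\sigma(ab\otimes m)-\sigma(a\otimes bm)$ becomes a sum of terms of the form $\overline{a_{(1)}c}\otimes(\cdots)m$ with $c\in I(\mathcal{B})$, which vanishes in $\A/\!\!/\mathcal{B}\otimes_{\Z/2}M$ because $\A\cdot I(\mathcal{B})$ is killed in $\A/\!\!/\mathcal{B}$.

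Next I would verify $\A$-equivariance. The source carries the action by left multiplication on the first tensor factor, while the target carries the diagonal action of \fullref{rem:modA}. The identity $\sigma(c\cdot(a\otimes m))=c\cdot\sigma(a\otimes m)$ then reduces to coassociativity and multiplicativity of $\psi$: both sides expand to $\sum \overline{c_{(1)}a_{(1)}}\otimes c_{(2)}a_{(2)}m$.

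To finish, I would construct an inverse using the antipode $\chi$,
\[
\sigma^{-1}\colon \overline{a}\otimes m\longmapsto \sum a_{(1)}\otimes \chi(a_{(2)})m.
\]
Well-definedness amounts to showing $\sigma^{-1}(\overline{cb}\otimes m)\equiv 0$ in $\A\otimes_{\mathcal{B}}M$ whenever $b\in I(\mathcal{B})$; expanding $\psi(cb)=\psi(c)\psi(b)$ and moving each $b_{(1)}\in\mathcal{B}$ across the relative tensor, the sum collapses via the antipode axiom $\sum b_{(1)}\chi(b_{(2)})=\varepsilon(b)=0$. The identities $\sigma\sigma^{-1}=\id$ and $\sigma^{-1}\sigma=\id$ then follow termwise from the same antipode axiom applied to general $a\in\A$. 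The main obstacle in the argument is just the bookkeeping of Sweedler sums through the two different quotients; every step becomes routine once one uses that $\mathcal{B}$ is closed under $\psi$ and $\chi$ and that $I(\mathcal{B})$ vanishes in $\A/\!\!/\mathcal{B}$.
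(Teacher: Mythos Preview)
Your proposal is correct and is precisely the approach the paper takes: the paper's remark following the lemma writes down exactly your map $\sigma$ (as the composite $(\A\otimes f)\circ(\psi\otimes M)$) and your inverse (inserting the antipode $\chi$ in between), and then simply says ``one verifies that these maps descend to the quotients for more general $\mathcal{B}$.'' You have supplied those verifications in more detail than the paper does, and your Sweedler bookkeeping is accurate (the only harmless imprecision is that your decomposition $\psi(b)=1\otimes b+b\otimes 1+\cdots$ tacitly assumes $b\in I(\mathcal{B})$, but the case $b=1$ is trivial).
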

\begin{rem}
If $\mathcal{B}=\Z/2 $, the isomorphism of \fullref{lem:shearing} is induced by the composite
\[\xymatrix{ \A \ar[r]^-{\psi\otimes M} & \A \otimes \A \otimes M \ar[r]^-{\A \otimes f} & \A\otimes M} \]
where $f \colon \A \otimes M \to M$ is the structure map of the $\A$-module $M$. The maps $\psi$ and $\chi$ below are as in \fullref{rem:hopfalgebra}. The inverse is induced by the composite
\[\xymatrix{ \A \ar[r]^-{\psi\otimes M} &\A \otimes \A \otimes M  \ar[rr]^-{\A \otimes \chi \otimes M} & &  \A \otimes \A \otimes M \ar[r]^-{\A \otimes f} & \A\otimes M.}  \]
One verifies that these maps descend to the quotients for more general $\mathcal{B}$.
\end{rem}

From the shearing isomorphism and, from the adjunction
\[\Hom_{\cB}(M, N) \cong \Hom_{\A}(\A\otimes_{\cB} M, N) \]
one can prove that
\[ \Ext_{\A}^{*,*}( \mathcal{A}  /\!\!/ \mathcal{B} \otimes_{\Z/2 } M, N   ) \cong \Ext_{ \mathcal{B} }^{*,*}(  M, N )  \]
for any $\A$-modules $M$ and $N$. Therefore, in the case of extended modules, computations over $\A$ can be reduced to potentially easier computations over smaller sub- Hopf algebras $\cB$. Some common examples are described below. 

\begin{ex}
Many of the modules relevant in the computations of \cite{FH} are of the form $\A   /\!\!/ \A_1  \otimes_{\Z/2 } M_0$ in some range. By the adjunction
\[\Ext_{\A}^{*,*}(\A   /\!\!/ \A_1  \otimes_{\Z/2 } M_0, \Z/2 ) \cong \Ext_{\A_1}^{*,*}(M_0, \Z/2 ), \]
we only need to keep track of the $\A_1$-module structure.
\end{ex}
\begin{rem}
Let $R$ be a graded exterior algebra on $n$ generators over $\Z/2 $ 
\[R = E(x_1, \ldots, x_n) =  \Z/2 [x_1, \ldots, x_n]/(x_1^2, \ldots, x_n^2).\]
where $x_i$ is in degree $t_i$. Then $\Ext_{R}^{*,*}(\Z/2 ,\Z/2 )$ is a polynomial algebra on $n$ generators
\[ \Ext_{R}^{*,*}(\Z/2 ,\Z/2 ) \cong \Z/2 [y_1, \ldots, y_n]\]
for $y_i \in \Ext^{1,t_i}(\Z/2 ,\Z/2 )$. This is an example of a phenomenon called Koszul duality. 
\end{rem}

\begin{ex}\label{ex:corM1}
The module $M_0$ of \fullref{ex:M1} is isomorphic to $\A_1  /\!\!/  \E_0 $, where $\E_0$ is the algebra generated by $Sq^1$. The algebra $\E_0$ is an exterior algebra on one generator in degree $1$, so that
\[\Ext_{\A_1}^{*,*}(M_0,\Z/2 ) \cong \Ext_{\E_0}^{*,*}(\Z/2 ,\Z/2 ) \cong \Z/2 [h_0] \]
where $h_0 \in \Ext^{1,1}_{\E_0}(\Z/2 , \Z/2 )$. The Adams chart for $M_0$ contains one $h_0$-tower starting in degree $(0,0)$.
\end{ex}

\begin{ex}\label{ex:coneeta}
The $\A_1$-module $\A_1  /\!\!/  \E_1$ is the cohomology of $\Sigma^{-2}H^*(\C P^2)$, illustrated in \fullref{fig:E1}.  By the change-of-rings isomorphism,
\[\Ext_{\A_1}^{*,*}(\A_1  /\!\!/  \E_1, \Z/2 ) \cong \Ext_{\E_1}^{*,*}(\Z/2 , \Z/2 ).  \]
Since $\E_1 = E(Q_0, Q_1)$, it follows that $\Ext_{\A_1}^{*,*}(\A_1  /\!\!/  \E_1, \Z/2 )$ is a polynomial algebra on two generators. It is common to call the generator corresponding to $Q_0 = Sq^1$ by $h_0 \in \Ext_{\A_1}^{1,1}(\A_1  /\!\!/  \E_1, \Z/2 )$. The generator corresponding to $Q_1$ is often called $v_1 \in \Ext_{\A_1}^{1,3}(\A_1  /\!\!/  \E_1, \Z/2 )$, so that 
\[ \Ext_{\A_1}^{*,*}(\A_1  /\!\!/  \E_1, \Z/2 ) \cong \Z/2 [h_0, v_1].\]
The Adams chart is depicted in \fullref{fig:coneeta}.
\begin{figure}[ht]
  \centering
  \begin{minipage}{.4\textwidth}
    \centering \captionsetup{width=0.9\linewidth}
      \begin{tikzpicture}[scale=.5]
     \fill (0,0) circle (3pt);
     \sqtwoR(0,0,black);
       \fill (0,2) circle (3pt);
      \end{tikzpicture}
      \caption{$\mathcal{A}_1 /\!\!/ \E_1$}
      \label{fig:E1}
  \end{minipage}
  \begin{minipage}{.55\textwidth}
\centering
\centering \captionsetup{width=0.9\linewidth}
{\tiny
  \begin{tikzpicture}[scale=.5]
    \draw[step=1cm,gray,very thin] (0,0) grid (8,8);
    \fill (0.5,0.5) circle (3pt);
        \draw (0.5,0.5) -- (0.5,1.5);
           \fill (0.5,1.5) circle (3pt);
                  \draw (0.5,1.5) -- (0.5,2.5);
           \fill (0.5,2.5) circle (3pt);
                    \draw (0.5,2.5) -- (0.5,3.5);
           \fill (0.5,3.5) circle (3pt);
                        \draw (0.5,3.5) -- (0.5,4.5);
           \fill (0.5,4.5) circle (3pt);
                            \draw (0.5,4.5) -- (0.5,5.5);
           \fill (0.5,5.5) circle (3pt);
                           \draw (0.5,5.5) -- (0.5,6.5);
           \fill (0.5,6.5) circle (3pt);
                                \draw (0.5,6.5) -- (0.5,7.5);
           \fill (0.5,7.5) circle (3pt);
                \draw (0.5,7.5) -- (0.5,8);

           \fill (2.5,1.5) circle (3pt);
                  \draw (2.5,1.5) -- (2.5,2.5);
           \fill (2.5,2.5) circle (3pt);
                    \draw (2.5,2.5) -- (2.5,3.5);
           \fill (2.5,3.5) circle (3pt);
                        \draw (2.5,3.5) -- (2.5,4.5);
           \fill (2.5,4.5) circle (3pt);
                            \draw (2.5,4.5) -- (2.5,5.5);
           \fill (2.5,5.5) circle (3pt);
                           \draw (2.5,5.5) -- (2.5,6.5);
           \fill (2.5,6.5) circle (3pt);
                                \draw (2.5,6.5) -- (2.5,7.5);
           \fill (2.5,7.5) circle (3pt);
                \draw (2.5,7.5) -- (2.5,8);

           \fill (4.5,2.5) circle (3pt);
                    \draw (4.5,2.5) -- (4.5,3.5);
           \fill (4.5,3.5) circle (3pt);
                        \draw (4.5,3.5) -- (4.5,4.5);
           \fill (4.5,4.5) circle (3pt);
                            \draw (4.5,4.5) -- (4.5,5.5);
           \fill (4.5,5.5) circle (3pt);
                           \draw (4.5,5.5) -- (4.5,6.5);
           \fill (4.5,6.5) circle (3pt);
                                \draw (4.5,6.5) -- (4.5,7.5);
           \fill (4.5,7.5) circle (3pt);
                \draw (4.5,7.5) -- (4.5,8);

           \fill (6.5,3.5) circle (3pt);
                        \draw (6.5,3.5) -- (6.5,4.5);
           \fill (6.5,4.5) circle (3pt);
                            \draw (6.5,4.5) -- (6.5,5.5);
           \fill (6.5,5.5) circle (3pt);
                           \draw (6.5,5.5) -- (6.5,6.5);
           \fill (6.5,6.5) circle (3pt);
                                \draw (6.5,6.5) -- (6.5,7.5);
           \fill (6.5,7.5) circle (3pt);
                \draw (6.5,7.5) -- (6.5,8);
                
                  \node[anchor=west] at (0.5,0.5) {$1$} ;
                          \node[anchor=west] at (0.5,1.5) {$h_0$} ;
                  \node[anchor=west] at (2.5,1.5) {$v_1$} ;
                                   \node[anchor=west] at (4.5,2.5) {$v_1^2$} ;
                                     \node[anchor=west] at (6.5,3.5) {$v_1^3$} ;
 \end{tikzpicture}}
 \caption{The Adams chart for $\Ext_{\A_1}^{s,t}(\A_1  /\!\!/ \E_1 , \Z/2 )$.}
 \label{fig:coneeta}
 \end{minipage}
\end{figure}

\end{ex}

\subsection{Long Exact Sequences}\label{sec:LES}

For some of the computations below we will need to use the long exact sequence induced on $\Ext$ from a short exact sequence of modules.

\begin{prop}\label{prop:LES}
  Let $0 \to M \to N \to P \to 0$ be an exact sequence of $\mathcal{B}$-modules. Then there is a long exact sequence
  \[
  \xymatrix{
\ldots \ar[r] &  \Ext^{s, t}_{\mathcal{B}} (P, \Z/2 ) \ar[r] & \Ext^{s, t}_{\mathcal{B}} (N, \Z/2 ) \ar[r] & \Ext^{s, t}_{\mathcal{B}} (M, \Z/2 ) \ar[dll]_{\delta}\\
  & \Ext^{s+1, t}_{\mathcal{B}} (P, \Z/2 ) \ar[r] & \Ext^{s+1,t}_{\mathcal{B}} (N, \Z/2 ) \ar[r] & \ldots
  }
  \]
\end{prop}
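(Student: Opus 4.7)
The plan is to derive this long exact sequence by the standard technique of the horseshoe lemma plus the long exact sequence in cohomology of a short exact sequence of cochain complexes. Since $\Ext^{*,*}_{\mathcal{B}}(-,\Z/2)$ is, by construction in \fullref{sec:compext}, the (bi-graded) right derived functor of $\Hom^*_{\mathcal{B}}(-,\Z/2)$, such a long exact sequence is expected from general homological algebra; we just need to verify that the graded setting does not introduce any friction.

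First I would choose a projective resolution $P_\bullet \to M$ of $M$ by graded free $\mathcal{B}$-modules and a projective resolution $Q_\bullet \to P$ of $P$ in the same manner. I would then invoke (the graded version of) the horseshoe lemma to build a projective resolution $R_\bullet \to N$ with $R_i = P_i \oplus Q_i$ (as graded $\mathcal{B}$-modules), together with compatible chain maps fitting into a short exact sequence of complexes
\[ 0 \longrightarrow P_\bullet \longrightarrow R_\bullet \longrightarrow Q_\bullet \longrightarrow 0. \]
The horseshoe construction goes through identically in the graded setting because the splittings one needs exist on the level of graded $\mathcal{B}$-modules (a projective cover of a bounded-below graded module can itself be chosen bounded below and of finite type).

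Next, I would apply $\Hom^t_{\mathcal{B}}(-,\Z/2)$ to this short exact sequence of resolutions. Because each sequence $0 \to P_i \to R_i \to Q_i \to 0$ is split exact in each internal degree (the middle term is a direct sum, and $Q_i$ is projective), applying $\Hom^t_{\mathcal{B}}(-,\Z/2)$ yields a short exact sequence of cochain complexes of $\Z/2$-vector spaces, for every fixed internal degree $t$:
\[ 0 \longrightarrow \Hom^t_{\mathcal{B}}(Q_\bullet,\Z/2) \longrightarrow \Hom^t_{\mathcal{B}}(R_\bullet,\Z/2) \longrightarrow \Hom^t_{\mathcal{B}}(P_\bullet,\Z/2) \longrightarrow 0. \]
Taking cohomology of a short exact sequence of cochain complexes produces the usual long exact sequence with connecting homomorphism $\delta \colon H^s(\Hom^t_{\mathcal{B}}(P_\bullet,\Z/2)) \to H^{s+1}(\Hom^t_{\mathcal{B}}(Q_\bullet,\Z/2))$, which under the identification of $H^s(\Hom^t_{\mathcal{B}}(-,\Z/2))$ with $\Ext^{s,t}_{\mathcal{B}}(-,\Z/2)$ is exactly the displayed sequence.

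There is no genuine obstacle here; the only subtlety worth noting is that the bi-grading is preserved because every map in sight is a graded $\mathcal{B}$-module homomorphism of internal degree zero, and the horseshoe lemma produces graded splittings. Independence of the resolutions chosen, and naturality of $\delta$, follow by the standard comparison theorem for projective resolutions. If desired, one can alternatively obtain the same sequence by the Yoneda-extensions description of $\Ext$ given earlier in \fullref{sec:compext}, where the connecting map $\delta$ sends a class in $\Ext^{s,t}_{\mathcal{B}}(M,\Z/2)$ represented by an extension to its splice with $0 \to M \to N \to P \to 0$ to produce an extension of length $s+1$ ending in $P$.
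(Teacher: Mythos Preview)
Your proof is correct and in fact more detailed than what the paper provides: the paper does not give a proof of this proposition at all, treating it as a standard piece of homological algebra and referring to \cite[9.6]{mccleary}. What the paper does do, immediately after the statement, is describe the connecting homomorphism $\delta$ via the Yoneda-extensions interpretation (splicing an extension representing a class in $\Ext^{s,t}_{\mathcal{B}}(M,\Z/2)$ with $0 \to M \to N \to P \to 0$), which is exactly the alternative you mention in your final paragraph. So your horseshoe-lemma argument is the standard proof that the paper takes for granted, and your closing remark about the Yoneda description of $\delta$ matches the paper's own discussion.
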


The map $\delta$ can be identified using the description of $\Ext$ in terms of extensions given in \fullref{sec:compext}. Given an extension
\begin{equation}\label{eq:orig} \xymatrix{ 0 \ar[r] & \Sigma^t \Z/2  \ar[r] & P_1 \ar[r] & \ldots  \ar[r] & P_s \ar[r] & M \ar[r] & 0  }  \end{equation}
we let $P_{s+1} = N$ and get an extension of length $s+1$
\[ \xymatrix@-0.5pc{ 0 \ar[r] & \Sigma^t \Z/2  \ar[r] & P_1 \ar[r] & \ldots  \ar[r] & P_s \ar[rr]  \ar[dr] & &  N=P_{s+1}  \ar[r]  & P \ar[r] & 0   \\
 & &   & & &     M  \ar[ur] & }  \]
which corresponds to the boundary of the element of $ \Ext^{s, t}_{\mathcal{B}} (M, \Z/2 ) $ represented by \eqref{eq:orig} in $  \Ext^{s+1, t}_{\mathcal{B}} (P, \Z/2 ) $.
See, e.g. \cite[9.6]{mccleary} for more details.

Computations using \fullref{prop:LES} can be done with the help of an Adams chart. The trick is to draw both $  \Ext^{s, t}_{\mathcal{B}} (P, \Z/2 )$ and $  \Ext^{s, t}_{\mathcal{B}} (M, \Z/2 )$ in the same chart and to treat the boundary map $\delta$ as a differential of slope $(-1,1)$. We illustrate this by an example.

\begin{ex}\label{ex:R0}
We compute $\Ext_{\A_1}^{s,t}(R_0, \Z/2 )$ for $R_0$ as depicted in \fullref{fig:R0}. The module $R_0$ sits in an exact sequence 
\begin{equation}\label{eq:extR0}
0 \to \Sigma \Z/2  \to R_0 \to M_{\infty} \to 0, \end{equation}
so we use the long exact sequence of \fullref{prop:LES} to compute $\Ext_{\A_1}^{s,t}(R_0, \Z/2 )$:
  \[
  \xymatrix{
 \ldots \ar[r] & \Ext^{s, t}_{\mathcal{A}_1} (M_{\infty}, \Z/2 ) \ar[r] & \Ext^{s, t}_{\mathcal{A}_1} (R_0, \Z/2 ) \ar[r] & \Ext^{s, t}_{\mathcal{A}_1} (\Sigma \Z/2 , \Z/2 ) \ar[dll]_-{\delta}^-{h_0}\\
 &  \Ext^{s+1, t}_{\mathcal{A}_1} (M_{\infty}, \Z/2 ) \ar[r] & \Ext^{s+1,t}_{\mathcal{A}_1} (R_0, \Z/2 ) \ar[r] & \ldots
  }
  \]
The boundary is given by multiplication by $h_0$ since \eqref{eq:extR0} is a representative extension for the element $h_0 \cdot 1 \in \Ext_{\A_1}^{1,1}(M_{\infty}, \Z/2)$.
  
In \fullref{fig:R0achart}, the classes of $\Ext_{\A_1}^{s,t}(\Sigma \Z/2 , \Z/2 )$ (blue), which is illustrated in \fullref{fig:F2}, support boundaries (red) to the classes of $\Ext_{\A_1}^{s+1,t}(M_{\infty}, \Z/2 )$ (green).
The circled classes are the elements of $\Ext_{\A_1}^{s,t}(R_0, \Z/2 )$ in this range. The dashed line indicates a multiplication by $h_1$ between a class coming from $\Ext_{\A_1}^{s,t}(\Sigma \Z/2 , \Z/2 )$ and a class coming from $\Ext_{\A_1}^{s+1,t}(M_{\infty}, \Z/2 )$, which we have not justified. One way to do this is to compute a minimal resolution for $R_0$ and use the fact that multiplication by $h_1$ corresponds to the extension depicted in \fullref{fig:h1}.

\begin{figure}[ht]
  \centering
\begin{minipage}{.4\textwidth}
\centering \captionsetup{width=0.9\linewidth}
 \begin{tikzpicture}[scale = .5]
 \fill (-4,1)  circle (3pt)  ;
   \draw[blue, ->] (-4,1) -- (0,1);
      \draw[blue, ->] (0,0) -- (4,0);
 
    \fill (0,0) circle (3pt)  ;
        \sqone(0,0,black); 
         \sqtwoL(0,0,black); 
            \fill (0,1) circle (3pt) ;
                   \fill (0,2) circle (3pt)  ;
                      \sqone(0,2,black); 
               \fill (0,3) circle (3pt) ;
               \sqtwoL(0,3, black);
               \fill (0,4) circle (3pt)   ;
               \sqone(0,4,black); 
                    \sqtwoR(0,4, black);
               \fill (0,5) circle (3pt) ;
                 \fill (0,6) circle (3pt) ;
                   \sqone(0,6,black); 
    \fill (4,0) circle (3pt)  ;
         \sqtwoL(4,0,black); 
                   \fill (4,2) circle (3pt)  ;
                      \sqone(4,2,black); 
               \fill (4,3) circle (3pt) ;
               \sqtwoL(4,3, black);
               \fill (4,4) circle (3pt)   ;
               \sqone(4,4,black); 
                    \sqtwoR(4,4, black);
               \fill (4,5) circle (3pt) ;
                 \fill (4,6) circle (3pt) ;
                   \sqone(4,6,black); 
                       \node[anchor=north] at (-4, 0.5) {$\Sigma \Z/2 $}; 
                      \node[anchor=north] at (0, -0.5) {$R_0$}; 
                         \node[anchor=north] at (4, -0.5) {$M_{\infty}$}; 
                                                         \end{tikzpicture}
                                    \caption{An extension exhibiting an $\A_1$-module we call $R_0$.}
                                    \label{fig:R0}
                             
\end{minipage}                                    
\begin{minipage}{.55\textwidth}
\centering
\centering \captionsetup{width=0.9\linewidth}
  \begin{tikzpicture}[scale=.5]
    \draw[step=1cm,gray,very thin] (0,0) grid (8,8);
    \fill[green] (0.5,0.5) circle (3pt);
    \fill[green] (4.5,0.5) circle (3pt);
    \fill[green] (4.5,1.5) circle (3pt);
    \draw[green] (4.5,0.5) -- (4.5,1.5);
    \fill[blue] (2.5,1.5) circle (3pt);
    \fill[blue] (3.5,2.5) circle (3pt);
    \draw[blue] (2.5,1.5) -- (3.5,2.5);
 \draw[blue] (1.5,0.5) -- (2.5,1.5);
    \foreach \y in {1,2,3,4,5,6,7}
             {\fill[green] (0.5,\y+.5) circle (3pt);}
    \foreach \y in {1,2,3,4,5,6,7}
             {\draw[green] (0.5,\y-.5)--(.5,\y+.5);}
     \foreach \y in {0,1,2,3,4,5,6,7}
             {\fill[blue] (1.5,\y+.5) circle (3pt);}
     \foreach \y in {1,2,3,4,5,6,7}
              {\draw[blue] (1.5,\y-.5)--(1.5,\y+.5);}
     \foreach \y in {1, 2, 3, 4, 5, 6}
              {\draw[green] (4.5,\y+.5)--(4.5,\y+1.5);}
     \foreach \y in {2,3,4,5,6,7}
              {\fill[green] (4.5,\y+.5) circle (3pt);}
     \foreach \y in {3,4,5,6,7}
              {\fill[blue] (5.5,\y+.5) circle (3pt);}
     \foreach \y in {3,4,5,6,6}
              {\draw[blue] (5.5,\y+.5) -- (5.5,\y+1.5);}
     \foreach \y in {0,1,2,3,4,5,6}
              {\draw[red] (1.5,\y+.5) -- (0.5,\y+1.5);}
     \foreach \y in {3,4,5,6}
              {\draw[red] (5.5,\y+.5) -- (4.5,\y+1.5);} 
              \draw[green] (0.5,7.5) -- (0.5,8); 
                 \draw[blue] (1.5,7.5) -- (1.5,8);
                    \draw[green] (4.5,7.5) -- (4.5,8); 
                       \draw[blue] (5.5,7.5) -- (5.5,8); 
                        \draw[red] (1.5,7.5) -- (1,8);
                          \draw[red] (5.5,7.5) -- (5,8);
                          
\draw[black] (0.5,0.5) circle (5pt);     
\draw[black] (4.5,0.5) circle (5pt);
\draw[black] (4.5,1.5) circle (5pt);    
\draw[black] (4.5,2.5) circle (5pt);    
\draw[black] (4.5,3.5) circle (5pt);    
\draw[black] (2.5,1.5) circle (5pt);        
\draw[black] (3.5,2.5) circle (5pt);    

\draw[dashed] (3.5,2.5) -- (4.5,3.5);          
  \end{tikzpicture}
  \caption{The computation of the Adams chart for $\Ext_{\A_1}^{s,t}(R_0, \Z/2 )$ using the exact sequence of \fullref{fig:R0}. }
  \label{fig:R0achart}
  \end{minipage}
\end{figure}
\end{ex}

\begin{ex}
Consider the $\A_1$-module depicted in \fullref{fig:R1}. Using \fullref{prop:LES}, we get the Adams chart depicted in \fullref{fig:R1achart}.
\begin{figure}[ht]
  \centering
  \begin{minipage}{.4\textwidth}
    \centering
\centering \captionsetup{width=0.9\linewidth}
 \begin{tikzpicture}[scale = .4]
 \fill (0,0)  circle (3pt)  ;
     \fill (0,2)  circle (3pt)  ;
      \sqtwoL(0,0,black);

\fill (0,1)  circle (3pt)  ;
\sqtwoR(0,1,black);
\sqone(0,1,black);
\fill (0,3)  circle (3pt)  ;
\sqone(0,3,black);
\fill (0,4) circle (3pt) ;
\fill (0,5) circle (3pt) ;
\sqone(0,5,black);
\fill (0,6) circle (3pt);
\sqtwoR(0,4,black);
\fill (0,7) circle (3pt);
\sqtwoL(0,5,black);
\sqone(0,7,black);

\fill (4,1)  circle (3pt)  ;
\sqtwoR(4,1,black);
\fill (4,3)  circle (3pt)  ;
\sqone(4,3,black);
\fill (4,4) circle (3pt) ;
\fill (4,5) circle (3pt) ;
\sqone(4,5,black);
\fill (4,6) circle (3pt);
\sqtwoR(4,4,black);
\fill (4,7) circle (3pt);
\sqtwoL(4,5,black);
\sqone(4,7,black);

 \fill (-4,0)  circle (3pt)  ;
  \sqtwoL(-4,0,black);
   \fill (-4,2)  circle (3pt)  ;
   \draw[blue, ->] (-4,0) -- (0,0);
      \draw[blue, ->] (-4,2) -- (0,2);

                       \node[anchor=north] at (-4, -0.5) {$\A_1  /\!\!/ \E_1$}; 
                      \node[anchor=north] at (0, -0.5) {$R_1$}; 
                         \node[anchor=north] at (4, 0.5) {$\Sigma M_{\infty}$}; 
                                                         \end{tikzpicture}
                                    \caption{An exact sequence of $\A_1$-modules depicting $R_1$.}
                                    \label{fig:R1}

\end{minipage}
  \begin{minipage}{.55\textwidth}
\centering
\centering \captionsetup{width=0.9\linewidth}
  \begin{tikzpicture}[scale=.5]
\draw[step=1cm,gray,very thin] (0,0) grid (7,8);
\foreach \y in {0,1,2,3,4,5,6,7}
{\fill[blue] (0.5,\y+.5) circle (3pt);}
\foreach \y in {1,2,3,4,5,6,7}
{\draw[blue] (0.5,\y-.5)--(.5,\y+.5);}
\draw[blue] (0.5,7.5) -- (0.5,8); 

\foreach \y in {0,1,2,3,4,5,6,7}
{\fill[green] (1.5,\y+.5) circle (3pt);}
\foreach \y in {1,2,3,4,5,6,7}
{\draw[green] (1.5,\y-.5)--(1.5,\y+.5);}
\draw[green] (1.5,7.5) -- (1.5,8); 

\foreach \y in {1,2,3,4,5,6,7}
{\fill[blue] (2.5,\y+.5) circle (3pt);}
\foreach \y in {2,3,4,5,6,7}
{\draw[blue] (2.5,\y-.5)--(2.5,\y+.5);}
\draw[blue] (2.5,7.5) -- (2.5,8);

\foreach \y in {2,3,4,5,6,7}
{\fill[blue] (4.5,\y+.5) circle (3pt);}
\foreach \y in {3,4,5,6,7}
{\draw[blue] (4.5,\y-.5)--(4.5,\y+.5);}
\draw[blue] (4.5,7.5) -- (4.5,8);

\foreach \y in {0,1,2,3,4,5,6,7}
{\fill[green] (5.5,\y+.5) circle (3pt);}
\foreach \y in {1,2,3,4,5,6,7}
{\draw[green] (5.5,\y-.5)--(5.5,\y+.5);}
\draw[green] (5.5,7.5) -- (5.5,8); 

\foreach \y in {3,4,5,6,7}
{\fill[blue] (6.5,\y+.5) circle (3pt);}
\foreach \y in {4,5,6,7}
{\draw[blue] (6.5,\y-.5)--(6.5,\y+.5);}
\draw[blue] (6.5,7.5) -- (6.5,8);

\foreach \y in {1,2,3,4,5,6}
{\draw[red] (2.5,\y+.5) -- (1.5,\y+1.5);}

\foreach \y in {3,4,5,6}
{\draw[red] (6.5,\y+.5) -- (5.5,\y+1.5);}

\draw[red] (2.5,7.5) -- (2,8);
\draw[red] (6.5,7.5) -- (6,8);
              
\draw[black] (0.5,0.5) circle (5pt);  
\draw[black] (0.5,1.5) circle (5pt);  
\draw[black] (0.5,2.5) circle (5pt);  
\draw[black] (0.5,3.5) circle (5pt);  
\draw[black] (0.5,4.5) circle (5pt);  
\draw[black] (0.5,5.5) circle (5pt);
\draw[black] (0.5,6.5) circle (5pt);  
\draw[black] (0.5,7.5) circle (5pt);

\draw[black] (1.5,0.5) circle (5pt);  
\draw[black] (1.5,1.5) circle (5pt);

\draw[black] (5.5,0.5) circle (5pt);  
\draw[black] (5.5,1.5) circle (5pt);  
\draw[black] (5.5,2.5) circle (5pt);  
\draw[black] (5.5,3.5) circle (5pt);       

\draw[black] (4.5,2.5) circle (5pt);  
\draw[black] (4.5,3.5) circle (5pt);  
\draw[black] (4.5,4.5) circle (5pt);  
\draw[black] (4.5,5.5) circle (5pt);
\draw[black] (4.5,6.5) circle (5pt);  
\draw[black] (4.5,7.5) circle (5pt);

  \end{tikzpicture}
\caption{The computation of the Adams chart of $\Ext_{\A_1}^{s,t}(R_1, \Z/2 )$ using the exact sequence of \fullref{fig:R1}.}
\label{fig:R1achart}
\end{minipage}
  \end{figure}
\end{ex}

\begin{rem}\label{rem:sstrick}
We present one last trick which is a variation on \fullref{prop:LES}. It uses the fact that, although the module $\A_1 /\!\!/ \E_0$ is not projective, it has a nice periodic resolution as an $\A_1$-module. Given a module $M$, suppose that  there is an exact complex
\begin{equation}
\xymatrix{0  & M \ar[l] & P_0 \ar[l]_-{f_0}  & P_1 \ar[l]_-{f_1}  & P_2 \ar[l]_-{f_2}  &  \ldots \ar[l] }
\end{equation}
where the $P_s$ are direct sums of suspensions of copies of $\A_1$ and $\A_1 /\!\!/ \E_0$ and with the property that
\[f_s(P_s) \subseteq I(\cB)P_{s-1},\]
so that $P_{\bullet} \to M$ is a ``minimal resolution'', but not by projective modules. We call this a ``modified'' minimal resolution. For each summand $\Sigma^{t} \A_1$ in $P_s$, there will be a generator of $\Z/2  \in \Ext^{s,t}_{\A_1}(M,\Z/2 )$ and for each summand
 $\Sigma^{t} \A_1 /\!\!/ \E_0$ in $P_s$, there will be an $h_0$-tower whose generator is in $\Ext^{s,t}_{\A_1}(M,\Z/2 )$. 

The proof of this fact uses the collapsing of the spectral sequence of a double complex built from minimal resolutions.
 \end{rem}

 \begin{ex}\label{ex:periodicF2}
 We give a modified minimal resolution for $\A_1$ which is periodic in \fullref{fig:periodicF2}. More precisely, the figure depicts the top row of \eqref{eq:per}. The periodic resolution is obtained by splicing copies of this complex together and is the bottom row of \eqref{eq:per}.
 \begin{equation}\label{eq:per}
 \xymatrix@-1.2pc{  0 & \Z/2  \ar[l]   \ar@{=}[d]  & \A_1 \ar@{=}[d]  \ar[l] & \Sigma^2 \A_1 \oplus  \Sigma \A_1 /\!\!/ \E_0  \ar@{=}[d] \ar[l] & \Sigma^{4} \A_1 \ar@{=}[d] \ar[l] & \Sigma^7 \A_1 /\!\!/ \E_0 \ar@{=}[d] \ar[l] & \Sigma^{12} \Z/2  \ar[l] & 0 \ar[l] \\
 0 & \Z/2  \ar[l]  & P_0  \ar[l]  & P_1 \ar[l] & P_2 \ar[l] &P_3  \ar[l] & \Sigma^{12} P_0 \ar[l] \ar[u] &  \Sigma^{12} P_1 \ar[l] &  \ldots \ar[l]   }
 \end{equation}

\begin{figure}[ht]
\center
{\tiny 
\begin{tikzpicture}[scale=.5] 
    \fill (0,0) circle (3pt) ;
         \Aone (2, 0);
             \draw[blue,->] (2, 0)  -- (0, 0);
\draw[red] (2,1) circle (5pt);
\draw[red] (2,2) circle (5pt);
\draw[red] (2,3) circle (5pt);
\draw[red] (4,3) circle (5pt);
\draw[red] (4,4) circle (5pt);
\draw[red] (4,5) circle (5pt);
\draw[red] (4,6) circle (5pt);

\msopart(5,1,red);
\Aone(9,2);
\draw[blue,->] (9,2)  .. controls (7, 1.5) .. (5, 2);
\draw[blue,->] (9,3)   .. controls (7, 2.5) ..  (5, 3);
\draw[blue,->] (9,4)   .. controls (8, 3.75) .. (7,4);
\draw[blue,->] (11,5)  .. controls (9, 4.5) ..   (7,5);
\draw[blue,->] (11,6)  .. controls (9, 5.5) ..   (7,6);

\amme(12,1,black)
\draw[green,->] (12,1)  .. controls (8.5, 0.5) ..   (5,1);
\draw[green,->] (12,3)  .. controls (9.5, 2.5) ..   (7,3);
\draw[green,->] (12,4)  .. controls (9.5, 4.5) ..   (7,4);
\draw[green,->] (12,6)  .. controls (9.5, 6.5) ..   (7,6);

\draw[red] (9,4) circle (5pt);
\draw[red] (9,5) circle (5pt);
\draw[red] (12,4) circle (5pt);
\draw[red] (11.8,4)  -- (9.2,4);

\draw[red] (11,6) circle (5pt);
\draw[red] (11,7) circle (5pt);
\draw[red] (11,8) circle (5pt);
\draw[red] (12,6) circle (5pt);
\draw[red] (11.8,6)  -- (11.2,6);

\jokercolor(14,4, red);
\Aone(16,4);
\draw[blue,->] (16,4)  .. controls (15, 3.75) .. (14, 4);
\draw[blue,->] (16,5)  .. controls (15, 4.75) .. (14, 5);
\draw[blue,->] (16,6)  .. controls (15, 5.75) .. (14, 6);
\draw[blue,->] (18,7)  .. controls (16, 6.5) .. (14, 7);
\draw[blue,->] (18,8)  .. controls (16, 7.5) .. (14, 8);

\draw[red] (16,7) circle (5pt);
\draw[red] (18,9) circle (5pt);
\draw[red] (18,10) circle (5pt);

\questionupsidedon(20,7,red);
\amme(22,7,black);
\draw[blue,->] (22,7)  .. controls (21, 6.75) .. (20, 7);
\draw[blue,->] (22,9)  .. controls (21, 8.75) .. (20, 9);
\draw[blue,->] (22,10)  .. controls (21, 9.75) .. (20, 10);
\draw[red] (22,12) circle (5pt);

\rectangle(2,0,blue);
\rectangle(9,2,blue);
\rectangle(12,1,blue);
\rectangle(16,4,blue);
\rectangle(22,7,blue);

  \end{tikzpicture}} 
\caption{A modified minimal resolution for $\Z/2 $. The dual of the boxed classes correspond to a copy of $\Z/2 $ if they generate an $\A_1$, or an $h_0$-tower if they generate an $\A_1  /\!\!/  \E_0$.}
\label{fig:periodicF2}
\end{figure}
\end{ex}

\begin{ex}
Consider the module $R_2$ depicted in \fullref{fig:R2JQ}. Using the resolution constructed in \fullref{ex:periodicF2}, we have a modified minimal resolution
\[ \xymatrix{ 0  & R_2 \ar[l] & \Sigma^{-1}P_1 \ar[l] &  \Sigma^{-1}P_2 \ar[l]  & \ldots \ar[l]. }\]
So the Adams chart for $\Ext^{s,t}_{\A_1}(R_2, \Z/2 )$ is a truncated version of that for $\Ext^{s,t}_{\A_1}(\Z/2 , \Z/2 )$ and is given in \fullref{fig:R2JQachart}. Similarly, the $\A_1$-modules $J$, called the \emph{joker}, and $Q$ called the \emph{``upside down'' question mark complex} also have Adams charts which are truncated versions of that for  $\A_1$. These are depicted in \fullref{fig:R2JQachart}.

\begin{figure}[ht]
\center
{\tiny  \begin{tikzpicture}[scale=.5]
\msopart(-10,0,black)
  \node[anchor=north] at (-10,-0.5) {$R_2$};  
  
\joker(-4,0);
\node[anchor=north] at (-4,-0.5) {$J$};

    \fill (0, 0) circle (3pt);
    \fill (0, 3) circle (3pt);
    \fill (0, 2) circle (3pt);
    \sqone (0, 2, black);
    \sqtwoL (0, 0, black); 
  \node[anchor=north] at (0,-0.5) {$Q$};  
  
  \end{tikzpicture}}
  \caption{An $\A_1$-module we call $R_2$ (left), the joker $J$ (center) and the ``upside down'' question mark complex $Q$ (right).}
  \label{fig:R2JQ}
  \end{figure}

 \begin{figure}[ht]
\center
{\tiny
  \begin{tikzpicture}[scale=.5]
    \draw[step=1cm,gray,very thin] (0,1) grid (12,8);
           \fill (0.5,1.5) circle (3pt);
                  \draw (0.5,1.5) -- (0.5,2.5);
           \fill (0.5,2.5) circle (3pt);
                    \draw (0.5,2.5) -- (0.5,3.5);
           \fill (0.5,3.5) circle (3pt);
                        \draw (0.5,3.5) -- (0.5,4.5);
           \fill (0.5,4.5) circle (3pt);
                            \draw (0.5,4.5) -- (0.5,5.5);
           \fill (0.5,5.5) circle (3pt);
                           \draw (0.5,5.5) -- (0.5,6.5);
           \fill (0.5,6.5) circle (3pt);
                                \draw (0.5,6.5) -- (0.5,7.5);
           \fill (0.5,7.5) circle (3pt);
                \draw (0.5,7.5) -- (0.5,8);

           \fill (4.5,3.5) circle (3pt);
                        \draw (4.5,3.5) -- (4.5,4.5);
           \fill (4.5,4.5) circle (3pt);
                            \draw (4.5,4.5) -- (4.5,5.5);
           \fill (4.5,5.5) circle (3pt);
                           \draw (4.5,5.5) -- (4.5,6.5);
           \fill (4.5,6.5) circle (3pt);
                                \draw (4.5,6.5) -- (4.5,7.5);
           \fill (4.5,7.5) circle (3pt);
                \draw (4.5,7.5) -- (4.5,8);

           \fill (8.5,4.5) circle (3pt);
                            \draw (8.5,4.5) -- (8.5,5.5);
           \fill (8.5,5.5) circle (3pt);
                           \draw (8.5,5.5) -- (8.5,6.5);
           \fill (8.5,6.5) circle (3pt);
                                \draw (8.5,6.5) -- (8.5,7.5);
           \fill (8.5,7.5) circle (3pt);
                \draw (8.5,7.5) -- (8.5,8);
                
                   \fill (1.5,1.5) circle (3pt);
                    
                                  \fill (2.5,2.5) circle (3pt);
                        \draw (1.5,1.5) -- (2.5,2.5);
                
                          \fill (9.5,5.5) circle (3pt);
                        \draw (8.5,4.5) -- (9.5,5.5);
                                  \fill (10.5,6.5) circle (3pt);
                        \draw (9.5,5.5) -- (10.5,6.5);

 \end{tikzpicture}
 
 \ \\

  \begin{tikzpicture}[scale=.5]
    \draw[step=1cm,gray,very thin] (2,2) grid (14,8);

           \fill (4.5,3.5) circle (3pt);
                        \draw (4.5,3.5) -- (4.5,4.5);
           \fill (4.5,4.5) circle (3pt);
                            \draw (4.5,4.5) -- (4.5,5.5);
           \fill (4.5,5.5) circle (3pt);
                           \draw (4.5,5.5) -- (4.5,6.5);
           \fill (4.5,6.5) circle (3pt);
                                \draw (4.5,6.5) -- (4.5,7.5);
           \fill (4.5,7.5) circle (3pt);
                \draw (4.5,7.5) -- (4.5,8);

           \fill (8.5,4.5) circle (3pt);
                            \draw (8.5,4.5) -- (8.5,5.5);
           \fill (8.5,5.5) circle (3pt);
                           \draw (8.5,5.5) -- (8.5,6.5);
           \fill (8.5,6.5) circle (3pt);
                                \draw (8.5,6.5) -- (8.5,7.5);
           \fill (8.5,7.5) circle (3pt);
                \draw (8.5,7.5) -- (8.5,8);
                
                                  \fill (2.5,2.5) circle (3pt);

                          \fill (9.5,5.5) circle (3pt);
                        \draw (8.5,4.5) -- (9.5,5.5);
                                  \fill (10.5,6.5) circle (3pt);
                        \draw (9.5,5.5) -- (10.5,6.5);
                        
                             \fill (12.5,7.5) circle (3pt);
    \draw (12.5,7.5) -- (12.5,8);

 \end{tikzpicture}

\ \\
  \begin{tikzpicture}[scale=.5]
    \draw[step=1cm,gray,very thin] (4,3) grid (16,8);

           \fill (4.5,3.5) circle (3pt);
                        \draw (4.5,3.5) -- (4.5,4.5);
           \fill (4.5,4.5) circle (3pt);
                            \draw (4.5,4.5) -- (4.5,5.5);
           \fill (4.5,5.5) circle (3pt);
                           \draw (4.5,5.5) -- (4.5,6.5);
           \fill (4.5,6.5) circle (3pt);
                                \draw (4.5,6.5) -- (4.5,7.5);
           \fill (4.5,7.5) circle (3pt);
                \draw (4.5,7.5) -- (4.5,8);

           \fill (8.5,4.5) circle (3pt);
                            \draw (8.5,4.5) -- (8.5,5.5);
           \fill (8.5,5.5) circle (3pt);
                           \draw (8.5,5.5) -- (8.5,6.5);
           \fill (8.5,6.5) circle (3pt);
                                \draw (8.5,6.5) -- (8.5,7.5);
           \fill (8.5,7.5) circle (3pt);
                \draw (8.5,7.5) -- (8.5,8);

                          \fill (9.5,5.5) circle (3pt);
                        \draw (8.5,4.5) -- (9.5,5.5);
                                  \fill (10.5,6.5) circle (3pt);
                        \draw (9.5,5.5) -- (10.5,6.5);

     \fill (12.5,7.5) circle (3pt);
    \draw (12.5,7.5) -- (12.5,8);

 \end{tikzpicture}}
 \caption{The Adams chart for $\Ext_{\A_1}^{s,t}(R_2, \Z/2 )$ (top), $\Ext_{\A_1}^{s,t}(J, \Z/2 )$ (center) and the Adams chart for $\Ext_{\A_1}^{s,t}(Q, \Z/2 )$ (bottom).}
\label{fig:R2JQachart}
\end{figure}
\end{ex}

\subsection{The Adams spectral sequence}\label{sec:assconstruction}
We turn to the construction of the spectral sequence. In this section, we make the following assumption:

\begin{assumption}\label{ass:assX}
Let $X$ be the suspension spectrum of a CW-complex that has finitely many cells in each dimension. 
\end{assumption}

For example, the Thom spectra we are considering have this property since Grassmanians have cell structures with finitely many $n$-cells for each $n$. Some of this can be done in more generality, but all of our examples will have models of this form so we limit ourselves to this case. A friendly reference to spectral sequences is Hatcher's online notes \cite{hatcherss}. Other great references are McCleary \cite{mccleary}, Boardman \cite{boardman} and Miller \cite{millerrelations}.

\begin{defn}
The \emph{Hurewicz homomorphism}
\begin{align*} 
h \colon \pi_tX =  [S^t, X]  \to \Hom_{\A} (H^*(X),  H^*(S^t))\cong \Hom_{\A} (H^*(X),  \Sigma^t \Z/2 )
\end{align*}
is defined by sending a map $f \colon S^t \to X$ to the induced map on cohomology, $f^* \colon H^*(X) \to H^*(S^t)$.
\end{defn}

If $h$ were an isomorphism, computing the homotopy groups of $X$ would be as easy as understanding its cohomology. In certain cases, this does happen. 

\begin{defn}\label{defn:genEM}
A spectrum $Z$ is a generalized Eilenberg--MacLane spectrum of finite type if 
\[Z  \simeq HV \simeq  \bigvee_{i \in I} \Sigma^iH\Z/2   \] 
where $V$ is a graded $\Z/2 $ vector space which is finite in each degree.
\end{defn}

The finiteness assumption in \fullref{defn:genEM} gives an isomorphism
\[\bigvee_{i \in I} \Sigma^iH\Z/2 \simeq \prod_{i \in I} \Sigma^iH\Z/2 .\]
See \eqref{eq:prodcoprod}.

\begin{ex}\label{rem:freeup}
Let $X$ be a spectrum that satisfies \fullref{ass:assX}. There is an isomorphism
\[ H^*(\HF \smsh X ) \cong \A \ox_{\Z/2 } H^*(X)  \]
and a class $1 \ox x \in H^{|x|}(\HF \smsh X )$ corresponds to a map
$\HF \smsh X \ra \Sigma^{|x|}\HF$. By \fullref{ass:assX}, the cohomology of $X$ is finite in each degree, so
\[ \prod_{x \in H^*(X) } \Sigma^{|x|} \HF \simeq \bigvee_{x \in H^*(X) } \Sigma^{|x|} \HF \]
and the product of these maps is a weak equivalence:
\[ \xymatrix{\HF \smsh X  \ar[r]  &  \bigvee_{x \in H^*(X) } \Sigma^{|x|} \HF }\]
So any spectrum of the form $\HF \smsh X$ for $X$ satisfying \fullref{ass:assX} is a generalized Eilenberg--MacLane spectrum of finite type.
\end{ex}

If $Z$ is a generalized Eilenberg--MacLane spectrum of finite type, then the Hurewicz homomorphism is an isomorphism. So, the idea is to resolve $X$ by generalized Eilenberg--MacLane spectra. 
\begin{defn}
  Let $X$ be a spectrum that satisfies \fullref{ass:assX}. An \textbf{Adams resolution} is a sequence of spectra
  \begin{equation}\label{eq:adamsres}
  \xymatrix{
  X = X_0 \ar[d]^-{j_0} &  X_1 \ar[d]^-{j_1}  \ar[l]_-{i_0} &  X_2 \ar[d]^-{j_2} \ar[l]_-{i_1}  &    X_3 \ar[l]_-{i_2}  \ar[d]^-{j_3} & \ldots \ar[l] &  \\ 
K_0  \ar@{.>}[ur]_-{\delta_0} &   K_1  \ar@{.>}[ur]_-{\delta_1}  &  K_2  \ar@{.>}[ur]_-{\delta_2} &    K_3 \ar@{.>}[ur]_-{\delta_3}    &     }   \end{equation}
where 
\[   \xymatrix{ X_{s+1} \ar[r]^-{i_s} & X_s \ar[r]^-{j_s} & K_s \ar[r]^-{\delta_s}  & \Sigma X_{s+1}}  \]
are cofiber sequences (i.e. exact triangles) and such that
    \begin{enumerate}[(a)]
  \item $K_i \simeq \bigvee \Sigma^j \HF$, and 
  \item $H^\ast (K_i) \to H^\ast (X_i)$ is surjective.  
  \end{enumerate}
\end{defn}

\begin{rem}
From an Adams resolution, we obtain a sequence
\[ \xymatrix{X = X_0 \ar[r] &  K_0  \ar[r] &  \Sigma K_1  \ar[r] &  \Sigma^2 K_2 \ar[r] & \ldots} \]
where $ \Sigma^s K_s \to  \Sigma^{s+1} K_{s+1}  $ is the composite $j_{s+1} \circ \delta_s$.  Further, the resolution is constructed so that $H^*(\Sigma^{\bullet} K_{\bullet}) \to H^*(X)$ is a projective resolution of $H^*(X)$ as an $\A$-module.
\end{rem}

\begin{rem}
Let $\bHF $ be defined by the fiber sequence $\bHF  \to S \to \HF$. From \fullref{rem:freeup}, it follows that
\[  \xymatrix{
  X  \ar[d]^-{j_0} &   \bHF \smsh X  \ar[d]^-{j_1}  \ar[l]_-{i_0} &  \bHF^{\smsh 2} \smsh X  \ar[d]^-{j_2} \ar[l]_-{i_1}   & \ldots \ar[l]_-{i_2}  &  \\ 
 \HF \smsh X  \ar@{.>}[ur]_-{\delta_0} &   \HF \smsh \bHF \smsh X  \ar@{.>}[ur]_-{\delta_1}  & \HF \smsh  \bHF^{\smsh 2} \smsh X  \ar@{.>}[ur]_-{\delta_2}     &     }  \]
is an Adams resolution. So Adams resolutions always exist.
\end{rem}

\begin{defn}
Let 
\[F^s = \text{im}(\pi_*X_s \ra \pi_* X ).\]
Then $\alpha \in \pi_*X$ has \emph{Adams filtration} $s$ if $\alpha \in F^s\backslash F^{s+1}$. \end{defn}

The Adams filtration of an element is independent of the choice of Adams resolution. 

\begin{lem}
An element $f\in \pi_t X$ has Adams filtration $\geq s$ if and only if $f$ factors as 
\[f : S^t =U_s\ra  U_{s-1} \ra U_{s-2} \ra \ldots \ra U_1 \ra U_0=X \]
where the maps $U_{i} \ra U_{i-1}$ induce the zero maps on mod-$2$ cohomology.
\end{lem}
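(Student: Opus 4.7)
The plan is to prove the two directions separately, using the cofiber sequences in an Adams resolution to translate between the filtration condition (lifts through $X_s$) and the factorization condition (towers with cohomologically trivial maps). The key input is that each structure map $i_s\colon X_{s+1}\to X_s$ of an Adams resolution is zero on mod-$2$ cohomology, because in the long exact sequence associated to the cofiber sequence $X_{s+1}\xrightarrow{i_s} X_s \xrightarrow{j_s} K_s$ the map $j_s^*$ is surjective by the defining property (b). Another key input is that each $K_s$ is a generalized Eilenberg--MacLane spectrum of finite type, so for any spectrum $Y$ satisfying \fullref{ass:assX} a map $Y\to K_s$ is null-homotopic if and only if it induces the zero map on mod-$2$ cohomology (by \fullref{rem:freeup} and the Hurewicz-type identification of maps into an $HV$).

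For the forward direction, suppose $f$ has Adams filtration $\geq s$. Then by definition $f$ lifts to a map $\tilde f\colon S^t\to X_s$ through an Adams resolution. Set $U_i = X_i$ for $0\le i\le s-1$, $U_s = S^t$, and take the maps $U_{i+1}\to U_i$ to be $i_i\colon X_{i+1}\to X_i$ for $0\le i\le s-2$, and $U_s\to U_{s-1}$ to be $i_{s-1}\circ\tilde f$. Each intermediate map $i_i$ is zero on cohomology by the observation above, and the last map factors through $i_{s-1}$, so it is also zero on cohomology. The total composite recovers $f$.

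For the reverse direction, suppose $f$ is given as a tower $S^t = U_s\to U_{s-1}\to\cdots\to U_0 = X$ with each map zero on cohomology. I will build lifts $\tilde g_i\colon U_i\to X_i$ compatible with the maps down to $X$, by induction on $i$. Take $\tilde g_0 = \mathrm{id}_X$. Suppose $\tilde g_{i-1}\colon U_{i-1}\to X_{i-1}$ has been constructed. Form the composite
\[
U_i \longrightarrow U_{i-1} \xrightarrow{\tilde g_{i-1}} X_{i-1} \xrightarrow{j_{i-1}} K_{i-1}.
\]
On mod-$2$ cohomology this pulls back through $H^*(U_{i-1})\to H^*(U_i)$, which vanishes by hypothesis. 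Since $K_{i-1}$ is a generalized Eilenberg--MacLane spectrum of finite type, the composite is null-homotopic. Applying $[U_i,-]$ to the cofiber sequence $X_i\xrightarrow{i_{i-1}} X_{i-1}\xrightarrow{j_{i-1}} K_{i-1}$ yields a lift $\tilde g_i\colon U_i\to X_i$ with $i_{i-1}\tilde g_i \simeq \tilde g_{i-1}\circ(U_i\to U_{i-1})$. Iterating $s$ times produces $\tilde g_s\colon S^t\to X_s$ whose image in $\pi_t X$ is $f$, witnessing Adams filtration $\geq s$.

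The main subtlety — and the only real content — is the inductive lifting step, specifically verifying that the cohomological vanishing of the \emph{tower} maps $U_i\to U_{i-1}$ (which is a statement about the $U_\bullet$ in isolation) combines with the \emph{chosen} lifts $\tilde g_{i-1}$ to produce cohomological vanishing of the obstruction class in $[U_i,K_{i-1}]$. This works cleanly because $j_{i-1}$ is applied last, so the obstruction automatically factors through $H^*(U_{i-1})\to H^*(U_i)$, independently of $\tilde g_{i-1}$. Once this is seen, the conversion to a genuine null-homotopy relies on the generalized Eilenberg--MacLane structure of $K_{i-1}$, and the lift is then produced by the standard long exact sequence argument.
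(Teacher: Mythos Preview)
The paper states this lemma without proof, so there is nothing to compare against directly. Your argument is the standard one and is correct in both directions.

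A few small remarks. In the reverse direction you write that the null-homotopy criterion for maps into $K_{i-1}$ holds ``for any spectrum $Y$ satisfying \fullref{ass:assX}''; in fact no hypothesis on $Y$ is needed here. What matters is that $K_{i-1}$ is of finite type, so that $\bigvee_j \Sigma^{n_j}H\mathbb{Z}/2 \simeq \prod_j \Sigma^{n_j}H\mathbb{Z}/2$ and hence $[Y,K_{i-1}]\cong \prod_j H^{n_j}(Y)$ for \emph{any} $Y$; the map is then null if and only if each coordinate cohomology class vanishes, which follows from $\phi^*=0$ by evaluating on the fundamental classes $\iota_j\in H^*(K_{i-1})$. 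This is worth getting straight because the intermediate spectra $U_i$ in the tower are not assumed to satisfy \fullref{ass:assX}. With that clarification your inductive lifting step goes through cleanly, and the forward direction is immediate from $i_s^*=0$ as you say.
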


\begin{ex}
An element of $\pi_*X$ has Adams filtration $0$ if and only if its image under the Hurewicz homomorphism is non-zero.
The image of $\pi_tX_1 \ra \pi_t X$ is the kernel of the map $j_0$. But $j_0^*$ is surjective on cohomology, so 
\[ \Hom_{\A}(H^*(X), \Sigma^t \Z/2) \to \Hom_{\A}(H^*(K_0), \Sigma^t \Z/2) \]
is injective. In particular, $i_0^*$ must be zero and the image of $i_0$ consists of elements of filtration $s\geq 1$.

Examples of elements of Adams filtration one are the Hopf maps
\begin{align*}
\eta \colon S^3 &\ra S^2,  &   \nu \colon  S^7 &\ra S^4,  & \sigma \colon  S^{15} &\ra S^8. 
\end{align*}
\end{ex}

We now turn to the construction of the Adams spectral sequence. Fix an Adams resolution of $X$ as in \eqref{eq:adamsres}.
Applying $\pi_*(-)$, we get an unravelled exact couple
\[\xymatrix{ \pi_*X \ar@{=}[r] &\pi_*X_0 \ar[d]^{j_0} & \pi_*X_1 \ar[l]_{i_0} \ar[d]^{j_1} & \pi_*X_2  \ar[l]_{i_1}  \ar[d]^{j_2} & \ar[l]_{i_2} \pi_*X_3 \ar[d]^{j_3} & \ar[l] \ldots  \\
& \pi_* K_0  \ar@{.>}[ur]_{\delta_0} & \pi_*K_1  \ar@{.>}[ur]_{\delta_1} & \pi_*K_2  \ar@{.>}[ur]_{\delta_2} & \pi_*K_3  \ar@{.>}[ur]  &  }  \]
from which we obtain a spectral sequence. More precisely, we let
\begin{enumerate}[(a)]
\item $E_1^{s, t} = \pi_{t-s}K_s \cong \pi_t \Sigma^s K_s$, and
\item $d_1 \colon E_1^{s, t}  \to E_1^{s+1, t} $ be given by $d_1 = \Sigma j_{s+1} \circ \delta_s $.
\end{enumerate}
In general, $E_r^{*,*} = \ker(d_{r-1})/\im(d_{r-1})$ and $d_r \colon E_r^{s, t} \to E_r^{s+r, t +r-1}$ is given by $d_r(x) = j(y)$ for any $y$ such that $i^{r-1}(y) = \delta(x)$. Here, $i^{n} =i \circ \ldots \circ i$ iterated $n$-times and we have left out indices and suspensions.

\begin{prop}
Let $X$ satisfy \fullref{ass:assX}. There is an isomorphism
\[E_2^{s,t} \cong \Ext_{\A}^{s,t}(H^*(X), \Z/2 ).\]
\end{prop}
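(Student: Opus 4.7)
The plan is to extract from the Adams resolution a free resolution of $H^*(X)$ as an $\A$-module, and then identify the cochain complex $(E_1^{*,t}, d_1)$ with the complex obtained by applying $\Hom_\A^t(-, \Z/2)$ to this resolution.

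First, I would apply mod-$2$ cohomology to each cofiber sequence $X_{s+1} \xra{i_s} X_s \xra{j_s} K_s$. Condition (b) of the Adams resolution says $j_s^*\co H^*(K_s) \to H^*(X_s)$ is surjective, so the long exact sequence in cohomology splits into short exact sequences
\[
0 \to H^*(\Sigma X_{s+1}) \to H^*(K_s) \to H^*(X_s) \to 0.
\]
Splicing these together via the connecting maps produces a long exact sequence
\[
\cdots \to H^*(\Sigma^2 K_2) \to H^*(\Sigma K_1) \to H^*(K_0) \to H^*(X) \to 0,
\]
and condition (a) ensures that $P_s := H^*(\Sigma^s K_s)$ is a free $\A$-module (a direct sum of suspensions of $\A$), since $K_s$ is a wedge of suspensions of $\HF$. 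Thus we have a free resolution $P_\bullet \to H^*(X)$.

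Next, I would identify $E_1^{s,t}$ with $\Hom_\A^t(P_s,\Z/2)$. For any generalized Eilenberg--MacLane spectrum of finite type $K = \bigvee_i \Sigma^{n_i} \HF$, both $\pi_m(K)$ and $\Hom_\A(H^*K, \Sigma^m \Z/2)$ decompose naturally as $\bigoplus_{n_i = m}\Z/2$, giving a functorial Hurewicz-type isomorphism $\pi_m K \cong \Hom_\A(H^*K, \Sigma^m\Z/2)$. Applied with $K = K_s$ and $m = t-s$, and rewritten using $P_s = \Sigma^s H^*(K_s)$, this yields
\[
E_1^{s,t} = \pi_{t-s}(K_s) \cong \Hom_\A(H^*(K_s), \Sigma^{t-s}\Z/2) = \Hom_\A^t(P_s, \Z/2).
\]

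Finally, I would check that $d_1$ matches the differential of $\Hom_\A^t(P_\bullet,\Z/2)$. Since $d_1$ is induced by $j_{s+1}\circ\delta_s\co K_s \to \Sigma K_{s+1}$, naturality of the Hurewicz-type identification turns it into precomposition with $(j_{s+1}\circ\delta_s)^*\co H^*(\Sigma K_{s+1}) \to H^*(K_s)$, and this map is (after the suspension shift) exactly the boundary of the spliced resolution $P_\bullet$. Consequently $(E_1^{*,t}, d_1) \cong \Hom_\A^t(P_\bullet,\Z/2)$ as cochain complexes, so passing to cohomology in the $s$-direction gives $E_2^{s,t} \cong \Ext_\A^{s,t}(H^*(X),\Z/2)$. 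The main technical point is this last compatibility: one must verify that the composite $(j_{s+1}\circ\delta_s)^*$ agrees with the differential used to splice the short exact sequences into the resolution $P_\bullet$. This is a direct diagram chase using how a long exact sequence is built from short exact sequences, but it is the only step that is more than a formal consequence of the definitions.
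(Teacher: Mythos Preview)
Your proof is correct and follows essentially the same approach as the paper's: identify $E_1^{s,t} \cong \Hom_\A^t(H^*(\Sigma^s K_s), \Z/2)$ via the Hurewicz isomorphism for generalized Eilenberg--MacLane spectra, observe that $H^*(\Sigma^\bullet K_\bullet) \to H^*(X)$ is a free resolution, and conclude. You are simply more explicit than the paper about two points it leaves to earlier remarks or to the reader: the splicing of short exact sequences to obtain the resolution, and the compatibility of $d_1$ with the resolution differential.
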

\begin{proof}
In degree $t$, the $E_2$ term is the cohomology of 
\begin{equation}\label{eq:respiK} \xymatrix{0 \ar[r] &  \pi_tK_0 \ar[r] &  \pi_t\Sigma K_1 \ar[r] &  \pi_t\Sigma^2 K_2 \ar[r] &  \ldots }  \end{equation}
However, the $K_s$ are generalized Eilenberg MacLane spectra, so
\[ \pi_{t}\Sigma^s K_s \cong \Hom_{\A}(H^*(\Sigma^s K_s), \Z/2 ).\]
The Adams resolutions are built so that $H^*(\Sigma^{\bullet} K_{\bullet}) \to H^*(X)$ is a projective resolution as $\A$-modules, so the homology of \eqref{eq:respiK} is $ \Ext_{\A}^{s,t}(H^*(X), \Z/2 )$.
\end{proof}

In general, the Adams spectral sequence does not exactly compute the homotopy groups of the spectrum $X$. However, under \fullref{ass:assX}, it does compute their \emph{$2$-completion}, a construction we review here.

\begin{defn}\label{defn:abcomp}
Let $G$ be an abelian group. For each $s\in \N$, let 
\[p_{s+1} \colon G/2^{s+1} \to G/2^{s}\] 
be the map induced by reduction modulo $2^{s}$. The \emph{$2$-completion} of $G$, denoted by $ G_2^{\wedge}$, is the inverse limit of $G/2^{s}$ along the maps $p_{s}$. That is,
\[  G_2^{\wedge} = \varprojlim_s G/2^s . \]
\end{defn}

\begin{rem}
Note that in the category of abelian groups, $\varprojlim_s G/2^s$ is isomorphic to the kernel of the map
\[  p \colon \prod_{s} G/2^s \to \prod_{s} G/2^s \]
where $p$ is the difference of the identity and the map to the product induced by the composites $ \prod_{s } G/2^s \to G/2^{k+1} \xra{p_{k+1}} G/2^k $. 
\end{rem}

\begin{ex}\label{ex:2adic}
If $G=\Z$, then 
\[\Z_2 := (\Z)^{\wedge}_2 = \varprojlim_s \Z/2^s  \]
are the $2$-adic integers. In general, if $G$ is a finitely generated abelian group, 
\[G_2^{\wedge}\cong G\otimes \Z_2.\]
In particular, if $G$ is a finite abelian $2$-group, then $G_2^{\wedge} \cong G$.
\end{ex}

\begin{thm}\label{thm:assconv}
Let $X$ satisfy \fullref{ass:assX}. Then the Adams spectral sequence for $X$ computes the $2$-completion of the homotopy groups of $X$. That is, the spectral sequence converges to $(\pi_{*}X)^{\wedge}_{2}$:
\begin{align*}
\Ext_{\A}^{s,t}(H^*(X) , \Z/2  ) \Longrightarrow (\pi_{t-s}X)^{\wedge}_{2}
\end{align*}
\end{thm}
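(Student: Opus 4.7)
The plan is to realize the Adams spectral sequence as the homotopy spectral sequence associated to the canonical Adams tower $X_s = \bHF^{\smsh s}\smsh X$, $K_s = \HF \smsh X_s$, and then to identify its abutment with $(\pi_*X)^{\wedge}_{2}$. The $E_2$ identification with $\Ext_\A^{s,t}(H^*X, \Z/2)$ has been established in the discussion preceding the theorem, so the remaining content is strong convergence to the stated target. I would organize this into two tasks: first, showing the spectral sequence is strongly convergent to $\pi_*X^{\wedge}$ where $X^{\wedge} := \holim_s (X/X_{s+1})$ is the $\HF$-nilpotent completion; second, identifying $\pi_* X^{\wedge}$ with $(\pi_*X)^{\wedge}_{2}$ using \fullref{ass:assX}.

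For the first task, I need the filtration $F^s\pi_n X^{\wedge} = \im(\pi_n X_s \to \pi_n X^{\wedge})$ to be complete and Hausdorff and to have successive quotients $E_\infty^{s, s+n}$. Under \fullref{ass:assX}, the module $H^*X$ is bounded below and of finite type, so by \fullref{rem:range} and the minimal-resolution arguments of \fullref{sec:MinRes}, $E_2^{s,t}$ is finite in each bidegree and vanishes in a half-plane. A standard Adams-type vanishing-line argument then ensures that only finitely many differentials can enter or leave any given bidegree, giving a well-defined $E_\infty$ and an eventually-constant filtration on each $\pi_n X^{\wedge}$; the associated graded identification is the usual unravelling of the exact couple.

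For the second task, I would use the Milnor short exact sequence
\[ 0 \to {\varprojlim}^1 \pi_{n+1}(X/X_{s+1}) \to \pi_n X^{\wedge} \to \varprojlim \pi_n(X/X_{s+1}) \to 0 \]
and reduce to computing the inverse systems $\{\pi_n(X/X_{s+1})\}$. By induction on $s$, using the cofibre sequence $X_s \to X \to X/X_s$ and the description of smash powers of $\bHF$, one shows that $\varprojlim_s \pi_n(X/X_{s+1}) \cong (\pi_n X)^{\wedge}_{2}$ and that the $\lim^1$ term vanishes. Here the finite-type hypothesis on $H^*X$ is crucial to guarantee Mittag--Leffler behavior of the inverse system in each degree.

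The hard part is the identification $\pi_* X^{\wedge} \cong (\pi_*X)^{\wedge}_{2}$, which is (essentially) Bousfield's theorem on $\HF$-nilpotent completion. It is what makes \fullref{ass:assX} essential: without finite type, the nilpotent completion can differ wildly from the 2-adic completion, and one obtains only conditional convergence to a less tractable abutment. A self-contained proof of this identification requires careful control of the Postnikov tower arithmetic of $X/X_s$ and is the principal technical load of the theorem.
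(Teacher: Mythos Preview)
The paper does not actually prove \fullref{thm:assconv}. It is stated as a black-box theorem, followed immediately by \fullref{rem:compsepctra}, which sketches the construction of the $2$-completed spectrum $X_2^{\wedge}$ via a homotopy inverse limit of $X/2^s$ and refers the reader to Bousfield \cite[Section 2]{bousfield_lochom} and Ravenel \cite[II.2.1]{ravgreen} for details. This is in keeping with the expository aims of the paper: convergence of the Adams spectral sequence is assumed, not proved.

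Your outline is a reasonable and essentially correct sketch of how the standard proof goes, and it is in the spirit of the references the paper cites. You have correctly isolated the two ingredients: strong convergence of the tower spectral sequence (via finite-type vanishing-line arguments) and the identification of the $\HF$-nilpotent completion with the arithmetic $2$-completion, the latter being Bousfield's theorem. You are also right to flag that this second step is where \fullref{ass:assX} does the real work. One small note: the paper's \fullref{rem:compsepctra} phrases the target slightly differently, as $\pi_*(X_2^{\wedge})$ for a spectrum-level completion built from the tower $X/2^s$ rather than the $\bHF$-tower, so a complete argument along your lines would also need the comparison between these two towers (which again is handled in the cited references).
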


\begin{rem}\label{rem:compsepctra}
In fact, the Adams spectral sequence for $X$ satisfying \fullref{ass:assX} computes the homotopy groups of a spectrum $X_2^{\wedge}$ that can be obtained using a construction analogous to completion for abelian group, and which has the property that $\pi_*(X_2^{\wedge})  \cong  (\pi_*X)^{\wedge}_2$.
In broad strokes, we define $X/2^{s} \in \mathrm{hSp} $ via the exact triangle:
\[\xymatrix{ X \ar[r]^-{2^{s}}  & X \ar[r] &  X/2^{s}  \ar[r] &  \Sigma X.}\]
There are induced maps $p_{s+1}  \colon X/2^{s+1} \to X/2^s$ and we define $\varprojlim_s X/2^s \in \mathrm{hSp}$ by the exact triangle
\[ \xymatrix{ \varprojlim_s X/2^s \ar[r] &  \prod_{s} X/2^s \ar[r]^-{p} & \prod_{s} X/2^s  \ar[r] & \Sigma \varprojlim_s X/2^s} \]
where $p$ is the difference of the identity and the map to the product induced by the composites $ \prod_{s } X/2^s \to X/2^{k+1} \xra{p_{k+1}} X/2^k $. This is called the \emph{homotopy inverse limit}. For a spectrum $X$ that satisfies \fullref{ass:assX}, then
\[ X_2^{\wedge} = \varprojlim_s X/2^s. \]
We refer the reader to Bousfield \cite[Section 2]{bousfield_lochom} and Ravenel \cite[II.2.1]{ravgreen} for more details on this and related topics. 
\end{rem}

\subsection{Using the Adams spectral sequence}\label{sec:usingASS}
In this section, we continue to assume that $X$ satisfies \fullref{ass:assX}, so that the Adams spectral sequence for $X$ computes the $2$-completion of the homotopy groups of $X$.

Computing Adams differentials is ``an art not a science''. There is no algorithm for determining them in general and it usually is a theorem when one computes a new differential in a spectral sequence of interest. However, there are rules to the game and the goal of this section is to share some of the tricks of the trade.

First, the Adams spectral sequence is depicted in an Adams chart as in \fullref{fig:ASSexample}. 
In this grading, a $d_r$ differential increases $s$ by $r$ and decreases $t-s$ by $1$. If $d_r(x)=y$, we say that $x$ hits, or \emph{kills} $y$. The class $x$ is the \emph{source} and $y$ the \emph{target} of the differential. A class which is in the the kernel of $d_r$ for every $r$ is called a \emph{permanent cycle}. A class which is hit by a differential is called a \emph{boundary}. We say that $x$ \emph{survives} if it is a permanent cycle, but not a boundary. 

The Adams spectral sequence for $X$ is a module over the Adams spectral sequence for $S^0$. From this, it follows in particular that the differentials are $h_0$ and $h_1$-linear. That is, $d_r(h_ix)=h_i d_r(x)$ for $i=0,1$.

We draw each page $E_r^{*,*}$ of the spectral sequence in subsequent Adams charts, erasing pairs of classes $x$ and $y$ that are connected by a differential $d_r(x)=y$ as we ``turn the pages''. Letting the process go to infinity, or stopping when there are no possible differentials left, we get the last page, called $E_{\infty}^{*,*}$. The last page of the spectral sequence contains the information for $(\pi_*X)^{\wedge}_{2}$ in the form of an \emph{associated graded}. That is, there is a filtration
\[ (\pi_{t}X)^{\wedge}_{2} = F_{\infty}^{0,t} \supseteq F_{\infty}^{1,t+1} \supseteq F_{\infty}^{2,t+2} \supseteq \ldots    \] 
related to $E_{\infty}^{*,*}$ by exact sequences
\begin{equation}\label{eq:extASS}
0 \to  F_{\infty}^{s+1,t+s+1} \to   F_{\infty}^{s,t+s} \to E_{\infty}^{s,t+s} \to 0 . \end{equation}
So, each box in the $t-s$ column of the Adams chart at $E_{\infty}^{*,*}$ is a subquotient of the answer $ (\pi_{t}X)^{\wedge}_{2}$ and the last problem is to reassemble them together.  This is called \emph{solving the extensions}, where the word ``extension'' refers to \eqref{eq:extASS}. We solve for $F_{\infty}^{0,t}/F^{s,t+s}$ inductively, starting with
\[0 \to  E_{\infty}^{1,t+1} \to   F_{\infty}^{0,t}/F_{\infty}^{2,t+2} \to E_{\infty}^{0,t} \to 0  \]
and continuing on to 
\[ 0 \to  E_{\infty}^{s,t+s} \to F_{\infty}^{0,t}/F_{\infty}^{s+1,t+s+1} \to F_{\infty}^{0,t}/F_{\infty}^{s,t+s} \to 0.    \]
We take the inverse limit once all of the terms $F_{\infty}^{0,t}/F^{s,t+s}$ have been determined. 

An element $a \in E_{\infty}^{s,t+s}$ only represents a class $\alpha \in  (\pi_{t}X)^{\wedge}_{2}$ modulo elements of \emph{higher filtration}. The language used is that $a$ \emph{detects} the element $\alpha$. Note that if $a$ detects $\alpha$, then it detects any class $\alpha + \beta$ where $\beta \in F_{\infty}^{s+1,t+s+1}$, so a class $a$ may detect multiple elements.

For the Adams spectral sequence, if $a \in E_{\infty}^{s,t+s}$ and $b \in E_{\infty}^{s+1, t+s+1}$ are such that $h_0a=b$, then $a$ detects an element $\alpha$ and $b$ detects an element $\beta$ such that $2\alpha = \beta$. So multiplication by $h_0$ records multiplication by $2$ and corresponds to a non-trivial, but easy to detect, extension as it comes from the module structure of the $E_2$-page. However, there can be non-trivial extensions coming from multiplications that do not come from the algebraic structure of the $E_2$-page. These are called \emph{exotic extensions}. For example, if $2\zeta = \omega$, for $\zeta \in F_{\infty}^{s,t+s}$ and $\omega \in F_{\infty}^{s+\epsilon,t+s+\epsilon}$ where $\epsilon >1$. Then $\zeta$ will be detected by some $z \in E_{\infty}^{s,t+s}$ and $\omega$ will be detected by some $w \in E_{\infty}^{s+\epsilon,t+s+\epsilon}$ so that these two classes are too far apart to be connected by an $h_0$. These situations are illustrated in \fullref{fig:ASSexample}.

If there are no non-trivial differentials, we say that the spectral sequence \emph{collapses}. We say that it \emph{collapses at $E_r$} if $E_r^{*,*}=E_{\infty}^{*,*}$. Often, there will be no possibilities for non-trivial differentials as the target of any possible differential will be zero. In this case, we say that the spectral sequence \emph{collapses for degree reasons}, or \emph{is too sparse for differentials}. Finally, if there are no possibilities for exotic extensions because no two classes on the $E_{\infty}$-page are aligned in a way that would allow for one to exist, we again say that there are \emph{no exotic extensions for degree reasons} or that the spectral sequence is \emph{too sparse for exotic extensions}. These are the best of all possible scenarios since differentials are hard to compute and exotic extensions are hard to solve. We will be in this situation in all of the examples in \fullref{sec:examples}.

\begin{ex}
A typical example of solving extensions is when a column consists of a single $h_0$-tower, say starting in $E_{\infty}^{0,t}$. Then
\[\xymatrix{ 0 \ar[r] & E_{\infty}^{s,t+s} \ar[r] \ar@{=}[d] & F_{\infty}^{0,t}/F_{\infty}^{s+1,t+s+1} \ar[r]  \ar@{=}[d] &  F_{\infty}^{0,t}/F_{\infty}^{s,t+s} \ar[r]\ar@{=}[d] & 0 \\
 0 \ar[r] & \Z/2 \ar[r] & \Z/2^{s+1} \ar[r] &  \Z/2^{s} \ar[r] & 0   }\]
 and $ F_{\infty}^{0,t}/F_{\infty}^{s,t+s}  \cong \Z/2^s\Z$ for all $s$. So
 \[  (\pi_{t}X)^{\wedge}_{2}  \cong \varprojlim_{s}  F_{\infty}^{0,t}/F_{\infty}^{s,t+s} \cong  \varprojlim_{s} \Z/2^s \cong \Z_2 \]
 where $\Z_2$ are the $2$-adic integers defined in \fullref{ex:2adic}.
 \end{ex}

\begin{figure}[ht]
\center

  \begin{tikzpicture}[scale=.6]
    \draw[step=1cm,gray,very thin] (0,0) grid (8,5);
    \fill (0.5,0.5) circle (3pt) ;
        \fill (1.5,0.5) circle (3pt) ;
        \draw (0.5,0.5) -- (0.5,1.5);
           \fill (0.5,1.5) circle (3pt);
                  \draw (0.5,1.5) -- (0.5,2.5);
           \fill (0.5,2.5) circle (3pt) node[anchor=west] {$a$};
                    \draw (0.5,2.5) -- (0.5,3.5);
           \fill (0.5,3.5) circle (3pt) node[anchor=west] {$b$};
                        \draw (0.5,3.5) -- (0.5,4.5);
           \fill (0.5,4.5) circle (3pt);
                            \draw (0.5,4.5) -- (0.5,5);

                 \node[anchor=east] at (0.5,3) {$h_0$};
               
                \fill (4.5,1.5) circle (3pt) node[anchor=west] {$x$} ;
                          \fill (3.5,4.5) circle (3pt) node[anchor=west] {$y$};

                                      \draw[->] (4.5,1.5) -- (3.6,4.4);
          
            \fill (6.5,1.5) circle (3pt) node[anchor=west] {$z$};
                \fill (6.5,3.5) circle (3pt) node[anchor=west] {$w$};
                            \node[anchor=west] at (4,3) {$d_3$};   
                            
                              \node[anchor=north] at (4,-0.5) {$E_3$};   
                      \fill (1.5,0.5) circle (3pt) node[anchor=west] {$e$} ;
        
 \end{tikzpicture} \  \ \ \ \ 
\begin{tikzpicture}[scale=.6]
    \draw[step=1cm,gray,very thin] (0,0) grid (8,5);
    \fill (0.5,0.5) circle (3pt);
      \fill (1.5,0.5) circle (3pt) ;
        \draw (0.5,0.5) -- (0.5,1.5);
           \fill (0.5,1.5) circle (3pt);
                  \draw (0.5,1.5) -- (0.5,2.5) node[anchor=west] {$a\sim \alpha$};
           \fill (0.5,2.5) circle (3pt);
                    \draw (0.5,2.5) -- (0.5,3.5) node[anchor=west] {$b\sim \beta$};
           \fill (0.5,3.5) circle (3pt);
                        \draw (0.5,3.5) -- (0.5,4.5);
           \fill (0.5,4.5) circle (3pt);
                            \draw (0.5,4.5) -- (0.5,5);

            \fill (6.5,1.5) circle (3pt) node[anchor=west] {$z \sim \zeta$};
               \draw[dashed] (6.5,1.5) -- (6.5,3.5);
                \fill (6.5,3.5) circle (3pt) node[anchor=west] {$w\sim \omega$};
           \node[anchor=east] at (0.5,3) {$h_0 \sim 2$};  
              \node[anchor=east] at (6.5,2.5) {$2$};   
                    \fill (1.5,0.5) circle (3pt) node[anchor=west] {$e$} ;
            
                     \node[anchor=north] at (4,-0.5) {$E_4$ or $E_{\infty}$};   
 \end{tikzpicture} 
 
 \caption{Some phenomena in an Adams spectral sequence. The left chart is an example of an $E_3$-page and the right is the corresponding $E_4$-page, which in this case would be the $E_{\infty}$-page as there is no possibilities for further differentials. (The class $e$ cannot support a $d_r$ differential to the $h_0$-tower since this would violate the $h_0$-linearity of the differentials.) }
\label{fig:ASSexample}
\end{figure}


\section{Examples from the classification problems}\label{sec:examples}

In this section, we work out examples to illustrate the methodology. First, some notation. In \cite{FH}, Freed and Hopkins give a uniform classification of fermionic symmetric groups (\cite[9.2]{FH}) in spacetime dimension $n$. There are two complex symmetry groups, denoted $H^c_n(s)$, and labelled by $s = 0, 1$ and eight real symmetry groups, denoted $H_n(s)$, and labelled by $s = 0, \pm 1, \pm 2, \pm 3, 4$. They also show \cite[2.12]{FH} that in each case there are maps $H_n (s) \hookrightarrow H_{n+1}(s)$ stabilizing the groups, so that it makes sense to speak of $H(s)$ and $H^c(s)$ (this is precisely analogous to how $O(n)$ stabilizes to $O$). The Madsen-Tillman spectra (see \fullref{sec:thomspectra}) $MTH(s)$ are the cobordism theory of manifolds with stable tangential $H(s)$-structure. It is this cobordism theory that features in the Freed-Hopkins classification. This section will be devoted to computing the low dimensional homotopy groups of these cobordism spectra.

In \cite{FH}, Freed and Hopkins produce the tables of \fullref{fig:tablesum}. The explanations in \cite{FH} are brief and some steps are left as exercises. In \cite{campbell}, one of the authors gave a detailed explanation of the computation for $MT\Pin^-$, $MT\Pin^{+}$, $MT\Pin^{\widetilde{c}-}$, $MT\Pin^{\widetilde{c}+}$ and $MTG^{+}$. For this reason, we choose to apply the methods to explain the computations for $MTG^0$, $MTG^{-}$, $MT\Spin^c$ and $MT\Pin^c$, although we start by reproducing the computation for $MTG^{+}$ as a warm-up.

\begin{figure}[ht]
\center
    \begin{tabular}{|c|c|c|ccccc|}
      \hline
      $\ast$ & $s$ & $X(H(s))$ & $\pi_0$ & $\pi_1$ & $\pi_2$ & $\pi_3$ & $\pi_4$ \\\hline \hline
      $M\Spin$& $0$ & $MO_0 \simeq S^0$ &$\Z$   & $\Z/2$   &   $\Z/2$   &    $0$   &   $\Z$  \\
       $MT\Pin^-$ &$+1$ & $\Sigma^{-1}MO_1$ & $\Z/2$ & $\Z/2$   & $\Z/8$   &  $0$  & $0$ \\
    $MT\Pin^+$&$-1$ & $\Sigma MTO_1$ &$\Z/2$& $0$  &  $\Z/2$ &  $\Z/2$ & $\Z/16$    \\
    $MT\Pin^{\tilde{c}-}$ &$+2$ & $\Sigma^{-2}MO_2$ &$\Z/2$   & $0$   & $\Z \times \Z/2$   & $0$  & $\Z/2$ \\
    $MT\Pin^{\tilde{c}+}$&$-2$ & $\Sigma^2MTO_2$  &$\Z/2$   & $0$   & $\Z$  &$\Z/2$   & $(\Z/2)^3$  \\
    $MTG^+$&$+3$ & $\Sigma^{-3}MO_3$  & $\Z/2$  & $0$    & $\Z/2$  & $0$   & $\Z/2 \times \Z/4$ \\
    $MTG^-$&$-3$ & $\Sigma^{3}MTO_3$ &  $\Z/2$    &   0      &   $\Z/2$      &  0     &    $(\Z/2)^3$   \\
    $MTG^0$& $+4$ & $\Sigma^{-3}MSO_3$ &  $\Z$ &  $0$  &  $0$  &  $0$ &     $\Z^2$     \\
    \hline
  \end{tabular}

\bigskip
  
    \begin{tabular}{|c|c|c|ccccc|}
    \hline
      $\ast$ & $s$ & $X(H^c(s))$ & $\pi_0$ & $\pi_1$ & $\pi_2$ & $\pi_3$ & $\pi_4$ \\\hline \hline
     $MT \Spin^c$ & $0$ & $\Sigma^{-2}MU_1$ & $\Z$ & $0$ & $\Z$ & $0$ & $\Z^2$ \\
     $MT \Pin^c$ & $1$ & $\Sigma^{-3}MU_1 \smsh MO_1$ & $\Z/2$ & $0$ & $\Z/4$ & $0$ & $\Z/8 \times \Z/2$ \\
     \hline
   \end{tabular}
   \caption{The various real (top) and complex (bottom) symmetry groups studied in \cite{FH}.}
   \label{fig:tablesum}
  \end{figure}

\subsection{Reducing to computations over $\A_1$}
Computations of $\A$ are in general difficult to perform without computer assistance. However, if one can reduce the computation to one over $\A_1 $, constructing minimal resolutions becomes rather straightforward and computations can be done by hand, at least in some range.

The key to making the shift from computations over $\A$ to computations over $\A_1 $ is the fact that the spectra $MTH$ defined above satisfy
\[MTH \simeq \MSpin  \smsh X(H) \]
where $X(H)$ are the Thom spectra of certain familiar vector bundles \cite[10.7]{FH}. The values of $X(H)$ for the groups $H$ studied in \cite{FH} are given in \fullref{fig:tablesum}.

Since our cohomology is with field coefficients, namely ${\Z}/2$, the K\"unneth formula gives an isomorphism
\[H^*(MTH) \cong H^*(\MSpin) \otimes_{\Z/2} H^*(X(H)).\]
The key steps in the reductions of computations to an $\A_1 $-module problem is the following theorem.
\begin{thm}[Anderson, Brown, Peterson]\label{thm:mspinko}
There is an isomorphism
\[H^*(\MSpin) \cong \A \otimes_{\A_1 }(\Z/2  \oplus M)\]
where $M$ is a graded $\A_1 $-modules which is zero in degrees $t<8$. 
\end{thm}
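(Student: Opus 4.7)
The plan is to establish the theorem by combining the Thom isomorphism with a structural splitting of $H^*(\MSpin)$ viewed as a module over the Steenrod algebra. This is the classical result of Anderson--Brown--Peterson, and the proof proposal below indicates how I would reconstruct it in the language of the preceding sections.

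First, I would compute $H^*(BSpin;\Z/2)$ as an algebra over $\A$. Since $BSpin \to BSO$ is the homotopy fiber of the map classifying $w_2$, one has a Serre spectral sequence
\[
E_2^{p,q} = H^p(K(\Z/2,2);\Z/2) \otimes H^q(BSpin;\Z/2) \Longrightarrow H^{p+q}(BSO;\Z/2),
\]
and using the known polynomial structure of $H^*(BSO;\Z/2)=\Z/2[w_2,w_3,\ldots]$ together with Serre's computation of $H^*(K(\Z/2,2);\Z/2)$, one can identify the generators of $H^*(BSpin;\Z/2)$. Applying the Thom isomorphism from \fullref{sec:SWclasses} then gives $H^*(\MSpin)\cong H^*(BSpin)\{U\}$ as a vector space, and the Wu formula together with \fullref{rem:twoforms} determines the full $\A$-action.

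The main step is then to show that $H^*(\MSpin)$ is \emph{extended} from an $\A_1$-module, i.e.\ of the form $\A\otimes_{\A_1} N$. By the shearing isomorphism \fullref{lem:shearing}, this is equivalent to $H^*(\MSpin)\cong \A/\!\!/\A_1\otimes_{\Z/2} N$ as $\A$-modules, and by the change-of-rings discussion of \fullref{sec:COR} it suffices to produce an $\A_1$-linear splitting of the quotient map $H^*(\MSpin) \twoheadrightarrow H^*(\MSpin)\otimes_\A \Z/2$ whose image generates $H^*(\MSpin)$ freely over $\A/\!\!/\A_1$. The tool here is a Milnor--Moore type criterion: because $\A$ is free as a right $\A_1$-module, any $\A$-module that is free over the subalgebra generated by $Q_0,Q_1$ in the appropriate graded sense is automatically extended. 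Concretely, I would verify this criterion for $H^*(BSpin;\Z/2)$ by exhibiting an explicit set of polynomial generators on which $Sq^1$ and $Sq^2$ behave as predicted, leveraging the fact that the Bockstein $Sq^1$ kills $w_2$ (imposing $w_3=Sq^1 w_2=0$ in $H^*(BSpin)$) and the Wu formula controls $Sq^2$.

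Once the extended structure is in hand, write $H^*(\MSpin)\cong \A\otimes_{\A_1} N'$ and inspect $N'$ in low degrees. The Thom class $U$ is a generator in degree $0$, and its $\A_1$-submodule is easily seen to be a free rank-one summand isomorphic to the trivial module $\Z/2$ (by the vanishing $Sq^1 U = w_1 U = 0$ and $Sq^2 U = w_2 U = 0$ on $BSpin$). Splitting this summand off gives $N'\cong \Z/2 \oplus M$, and a direct dimension count through degree $7$ shows that no further $\A_1$-generators appear below degree $8$ (the next $\A_1$-summand is generated by a class related to $w_4^2$ or an associated $KO$-Pontryagin class). The main obstacle is the second step — constructing the splitting — which is essentially where Anderson--Brown--Peterson invoke a $KO$-orientation of $\MSpin$ to realize the extended structure on the spectrum level; the cohomological shadow of that geometric splitting is what powers the algebraic argument.
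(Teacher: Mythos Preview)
The paper does not prove this theorem; it is stated as the result of Anderson--Brown--Peterson and used as a black box (see the corollary immediately following it). So there is no proof in the paper to compare against, and your proposal is an attempt to reconstruct a nontrivial external result.

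That said, your sketch has a genuine gap in the ``main step.'' The criterion you invoke is wrong as stated: freeness over the exterior subalgebra $E(Q_0,Q_1)=\E_1$ would, via the Milnor--Moore/Margolis machinery, give you that $H^*(\MSpin)$ is extended from $\E_1$, i.e.\ of the form $\A\otimes_{\E_1}(-)$, not $\A\otimes_{\A_1}(-)$. Those are different conclusions (compare $H^*(\ku)\cong\A/\!\!/\E_1$ versus $H^*(\ko)\cong\A/\!\!/\A_1$), and the $\A_1$-extended structure is strictly stronger. There is no purely algebraic ``freeness over a small subalgebra'' criterion that detects $\A_1$-extendedness in the way you suggest; $\A_1$ is not an exterior algebra, and the relevant Margolis-type tests only see $\E_1$.

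Your final paragraph is closer to the truth than the body of the argument: the actual Anderson--Brown--Peterson proof is not a cohomological computation but a \emph{spectrum-level} splitting. They construct maps from $\MSpin$ to wedges of $\ko$, $\ko\langle 2\rangle$, and $H\Z/2$ using $KO$-Pontryagin classes, and prove these assemble to an equivalence by a counting argument with Stong's basis for $H^*(BSpin)$. The cohomology isomorphism in the theorem is then read off from $H^*(\ko)\cong\A/\!\!/\A_1$. So the $KO$-orientation is not a convenience that ``powers the algebraic argument''; it \emph{is} the argument, and the algebraic statement is its shadow. If you want to reconstruct the proof, that is the route to take.
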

As a consequence of \fullref{thm:mspinko} and \fullref{rem:range}, we have:
\begin{cor}
There is an isomorphism
\[\Ext_{\A}^{s,t}(H^*(\MSpin \smsh X(H)), \Z/2 ) \cong \Ext_{\A_1 }^{s,t}(H^*(X(H)),\Z/2 ) \]
if $t-s < 8$. 
\end{cor}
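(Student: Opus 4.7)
The plan is to combine the Künneth isomorphism, the Anderson--Brown--Peterson decomposition of $H^*(\MSpin)$, a projection-formula trick, and the change-of-rings adjunction, and then rule out the ``extra'' piece in the stated range using the connectivity hypothesis.

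First I would apply the Künneth isomorphism (valid with field coefficients) to write
\[
H^*(\MSpin \smsh X(H)) \;\cong\; H^*(\MSpin) \otimes_{\Z/2} H^*(X(H)),
\]
and then substitute the Anderson--Brown--Peterson decomposition from \fullref{thm:mspinko}, giving
\[
H^*(\MSpin \smsh X(H)) \;\cong\; \bigl(\A \otimes_{\A_1}(\Z/2 \oplus M)\bigr) \otimes_{\Z/2} H^*(X(H)).
\]
Next I would invoke the Frobenius/projection isomorphism: for any $\A_1$-module $N$ and any $\A$-module $P$, there is a natural isomorphism of $\A$-modules
\[
(\A \otimes_{\A_1} N) \otimes_{\Z/2} P \;\cong\; \A \otimes_{\A_1} (N \otimes_{\Z/2} P|_{\A_1}),
\]
where the right-hand side carries the diagonal $\A_1$-action and the left-hand side has the diagonal $\A$-action. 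This is the standard consequence of the shearing isomorphism of \fullref{lem:shearing} applied to the Hopf-algebra pair $(\A,\A_1)$: one checks that the composite built from the coproduct and antipode intertwines the two module structures, exactly as in the proof of shearing. Applying it with $N=\Z/2 \oplus M$ and $P = H^*(X(H))$ yields
\[
H^*(\MSpin \smsh X(H)) \;\cong\; \A \otimes_{\A_1}\bigl(H^*(X(H)) \oplus (M \otimes_{\Z/2} H^*(X(H)))\bigr).
\]

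From here the change-of-rings isomorphism of \fullref{sec:COR} gives
\[
\Ext_{\A}^{s,t}(H^*(\MSpin \smsh X(H)), \Z/2) \;\cong\; \Ext_{\A_1}^{s,t}(H^*(X(H)), \Z/2) \,\oplus\, \Ext_{\A_1}^{s,t}(M \otimes H^*(X(H)), \Z/2).
\]
To conclude, I would verify case-by-case from the table in \fullref{fig:tablesum} that $H^*(X(H))$ is concentrated in non-negative cohomological degrees (e.g.\ $\Sigma^{-3}MO_3$, $\Sigma^{-2}MU_1$, $\Sigma^3 MTO_3$ all have their lowest-degree class in degree $0$). Since $M$ vanishes in degrees $t<8$ by \fullref{thm:mspinko}, the tensor product $M \otimes_{\Z/2} H^*(X(H))$ is concentrated in degrees $t \geq 8$. \fullref{rem:range} then forces $\Ext_{\A_1}^{s,t}(M \otimes H^*(X(H)), \Z/2)=0$ whenever $t-s<8$, so the second summand drops out and the isomorphism in the stated range follows.

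The main obstacle is the projection formula in the middle step: it is not isolated as a lemma in the excerpt, and although it is a standard consequence of $\A_1 \hookrightarrow \A$ being a sub-Hopf-algebra map, writing it carefully requires spelling out how the diagonal and extended module structures are identified via the coproduct on $\A$. The rest is a straightforward bookkeeping exercise, with the only other subtlety being the case-by-case verification of connectivity for the various $X(H)$, which is immediate from the normalizing suspensions recorded in \fullref{fig:tablesum}.
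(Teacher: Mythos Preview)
Your proof is correct and follows exactly the route the paper has in mind; the paper itself gives no details beyond ``As a consequence of \fullref{thm:mspinko} and \fullref{rem:range}'', and you have supplied them.

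One small simplification: your worry about the projection formula is unnecessary. Split $L=\Z/2\oplus M$ first. The $\Z/2$ summand gives $(\A\otimes_{\A_1}\Z/2)\otimes_{\Z/2}H^*(X(H)) = \A/\!\!/\A_1 \otimes_{\Z/2} H^*(X(H))$, to which the paper's change-of-rings isomorphism applies verbatim since $H^*(X(H))$ is already an $\A$-module. For the $M$ summand you do not need any projection formula at all: the $\A$-module $(\A\otimes_{\A_1}M)\otimes_{\Z/2}H^*(X(H))$ is concentrated in internal degrees $\geq 8$ (because $M$ is, $\A$ is connective, and $H^*(X(H))$ is non-negatively graded), so \fullref{rem:range} applied over $\A$ itself kills its $\Ext_{\A}^{s,t}$ for $t-s<8$. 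This bypasses the step you flagged as the main obstacle.
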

So low dimensional computations can be done over $\A_1$. We go through the following steps to compute $\pi_tMTH$ for $0\leq t\leq 4$:
\begin{enumerate}[(1)]
\item Compute $H^*(X(H))$ as modules over the $\A_1$. See \fullref{sec:compA1}.
\item Compute $\Ext_{\A_1}^{s,t}(H^*(X(H)), \Z/2 )$ in the range $t-s \leq 5$. See \fullref{sec:MinRes}, \fullref{sec:COR} and \fullref{sec:LES}.
\item Compute the differentials and extensions. See \fullref{sec:usingASS}. In all of our examples, the spectral sequences are too sparse for differentials and exotic extensions and this step is trivial.
\item Read off $\pi_*MTH$.
\end{enumerate}

We will do this one example at a time.

\subsection{The case $s=3$}
This is the case of $H=G^{+}= \Pin^{+} \times_{\{\pm 1\}} SU_2$ and in this case,
\[MTG^{+}  \simeq \MSpin \smsh \Sigma^{-3} MO_3 .\]

This example was stated in \cite{FH} and explicitly computed in \cite{campbell}. The cohomology of $H^*(\Sigma^{-3}MO_3)$ is illustrated in \fullref{fig:HMO3}. Let $R_3$ be the $\A_1$-module depicted in \fullref{fig:R3}, so that $R_3$ sits in an exact sequence
\[ 0 \to \Sigma Q \to R_3 \to M_{\infty} \to 0.\] 
From \fullref{fig:HMO3}, we have that
\[ H^*(\Sigma^{-3}MO_3) \approx R_3 \oplus \Sigma^{2}\A_1 \oplus \Sigma^{4}\A_1 \oplus \Sigma^5 \A_1 \]
where we will use $\approx$ to denote that there is an isomorphism in the range necessary for computations of homotopy groups in degrees less than or equal to $4$. We include the column $t-s=5$ to preclude the possibility of incoming differentials into the column $t-s=4$.

To compute the $E_2$-page of the spectral sequence
\[\Ext_{\A}^{s,t}(H^*(MTG^{+}), \Z/2 )  \approx \Ext_{\A_1}^{s,t}(H^*(\Sigma^{-3}MO_3), \Z/2 ) \Rightarrow \pi_{t-s}MTG^{+} , \]
we have to compute $\Ext_{\A_1}^{*,*}(R_3, \Z/2 )$. \fullref{fig:R3} and \fullref{fig:R3achart} illustrate this computation.

The Adams spectral sequence computing $\pi_*MTG^{+}$ is depicted in \fullref{fig:ASSMO3}. The spectral sequence is too sparse for differentials and exotic extensions, so the homotopy groups are
\begin{align*}
      \pi_0 MTG^+ &= \Z/2 
      \\  \pi_1 MTG^+ &= 0 \\
       \pi_2 MTG^+ &= \Z/2 \\
      \pi_3 MTG^+ &= 0 \\
        \pi_4 MTG^+ &= \Z/2 \times \Z/4. 
\end{align*}

\begin{figure}[!htb]
\centering
\begin{minipage}{.4\textwidth}
\centering \captionsetup{width=0.9\linewidth}
\begin{tikzpicture}[scale=.5] 
  \fill (-4, 1) circle (3pt);
  \sqtwoL(-4,1,black);
  \fill (-4, 3) circle (3pt);
  \sqone(-4,3,black)
  \fill (-4, 4) circle (3pt);

  \fill (0, 0) circle (3pt);
  \fill (0, 1) circle (3pt);
  \fill (0, 2) circle (3pt);
  \fill (0, 3) circle (3pt);
  \fill (0, 4) circle (3pt);
  \fill (0, 5) circle (3pt);
  \fill (0, 6) circle (3pt);
  \fill (0, 7) circle (3pt);
  \fill (0, 8) circle (3pt);
  \fill (0, 9) circle (3pt); 
  \fill (2, 3) circle (3pt);
  \fill (2, 4) circle (3pt);
  \sqone (0, 0, black);
  \sqtwoL (0, 0, black);
  \sqone (0, 2, black);
  \sqtwoL (0, 3, black);
  \sqone (0, 4, black);
  \sqtwoR (0, 4, black);
  \sqone (0, 6, black);
  \sqtwoR (0, 7, black);
  \sqone (0, 8, black); 
  \sqtwoCR (0, 1, black);
  
    \sqtwoCR (0, 2, black);
  \sqone (2, 3, black);
  
    \foreach \y in {0, 2, 3, 4, 5, 6, 7, 8,9}
           {\fill (4, \y) circle (3pt);}
           \sqtwoL (4, 0,black);
           \sqtwoL (4, 3, black);
           \sqtwoR (4, 4, black);
           \sqtwoR (4, 7, black);
           \sqone (4, 2, black);
           \sqone (4,4, black);
           \sqone (4,6, black); 
             \sqone (4,8, black);

   \draw[blue,->] (-4, 1) .. controls (-2, 0.75) ..  (0, 1);
     \draw[blue,->] (-4, 3)  .. controls (-1, 2.5) ..  (2, 3); 
          \draw[blue,->] (-4, 4) .. controls (-1, 3.5) .. (2, 4);

  \node[anchor=north] at (-4,0.5) {$Q$};
    \node[anchor=north] at (0,-0.5) {$R_3$};
        \node[anchor=north] at (4,-0.5) {$M_{\infty}$};
\end{tikzpicture}
\caption{An $\A_1$-module we call $R_3$.}
\label{fig:R3}
\end{minipage}
\begin{minipage}{.55\textwidth}
\centering
\centering \captionsetup{width=0.9\linewidth}
  \begin{tikzpicture}[scale=.5]
    \draw[step=1cm,gray,very thin] (0,0) grid (6,8);
    \fill[green] (0.5,0.5) circle (3pt);
    \fill[green] (4.5,0.5) circle (3pt);
    \fill[green] (4.5,1.5) circle (3pt);
    \draw[green] (4.5,0.5) -- (4.5,1.5);
    \foreach \y in {1,2,3,4,5,6,7}
             {\fill[green] (0.5,\y+.5) circle (3pt);}
    \foreach \y in {1,2,3,4,5,6,7}
             {\draw[green] (0.5,\y-.5)--(.5,\y+.5);}
     \foreach \y in {0,1,2,3,4,5,6,7}
             {\fill[blue] (1.5,\y+.5) circle (3pt);}
     \foreach \y in {1,2,3,4,5,6,7}
              {\draw[blue] (1.5,\y-.5)--(1.5,\y+.5);}
     \foreach \y in {1, 2, 3, 4, 5, 6}
              {\draw[green] (4.5,\y+.5)--(4.5,\y+1.5);}
     \foreach \y in {2,3,4,5,6,7}
              {\fill[green] (4.5,\y+.5) circle (3pt);}
     \foreach \y in {1,2,3,4,5,6,7}
              {\fill[blue] (5.5,\y+.5) circle (3pt);}
     \foreach \y in {1,2,3,4,5,6}
              {\draw[blue] (5.5,\y+.5) -- (5.5,\y+1.5);}
     \draw[blue] (5.5,1.5) -- (6,2);
     \foreach \y in {0,1,2,3,4,5,6}
              {\draw[red] (1.5,\y+.5) -- (0.5,\y+1.5);}
     \foreach \y in {1,2,3,4,5,6}
              {\draw[red] (5.5,\y+.5) -- (4.5,\y+1.5);}
              
\draw[red] (1.5,7+.5) -- (1,8);
\draw[red] (5.5,7+.5) -- (5,8);
\draw[green] (0.5,7.5) -- (0.5,8);
\draw[green] (4.5,7.5) -- (4.5,8);
\draw[blue] (1.5,7.5) -- (1.5,8);
\draw[blue] (5.5,7.5) -- (5.5,8);

\draw[black] (0.5,0.5) circle (5pt);
\draw[black] (4.5,0.5) circle (5pt);
\draw[black] (4.5,1.5) circle (5pt);

  \end{tikzpicture}
  \caption{The computation of the Adams chart for $\Ext_{\A_1}^{s,t}(R_3, \Z/2 )$ using the exact sequence of \fullref{fig:R3}. The Adams chart for $Q$ is given in \fullref{fig:R2JQachart} and $M_{\infty}$ is discussed in \fullref{ex:M1}.}
  \label{fig:R3achart}
  \end{minipage}
  
  \bigskip
        \center
  \begin{tikzpicture}[scale=.5]
    \draw[step=1cm,gray,very thin] (0,0) grid (6,3);
    \fill (0.5, 0.5) circle (3pt);
    \fill (2.5, 0.5) circle (3pt);
    \fill (4.25,0.5) circle (3pt);
    \fill (4.75,0.5) circle (3pt);
    \sqone(4.75, 0.5, black);
        \fill (4.75,1.5) circle (3pt);
      \fill (5.5, 0.5) circle (3pt); 
  \end{tikzpicture}
  \caption{The Adams chart for $\Ext_{\A_1}^{s,t}(H^*(\Sigma^{-3}MO_3), \Z/2 )$.}
    \label{fig:ASSMO3}
    \end{figure}

\subsection{The case $s=-3$}
This is the case of $H=G^{-}= \Pin^{-} \times_{\{\pm 1\}} SU_2$ and in this case,
\[MTG^{-}  \simeq \MSpin \smsh \Sigma^{3} MTO_3. \]

In the degrees relevant for us, the $\mathcal{A}_1$-module structure of $H^\ast (\Sigma^3 MTO_3)$ is given in \fullref{fig:HMTO3}. We have that 
\[
H^\ast (\Sigma^3 MTO_3) \approx \mathcal{A}_1 \oplus \Sigma^2 R_0 \oplus \Sigma^4 \mathcal{A}_1 \oplus \Sigma^4 \mathcal{A}_1 \oplus \Sigma^{5}R_5 
\]
where $R_0$ is the module depicted in \fullref{fig:R0} and $R_5$ the module depicted in \fullref{fig:R5}.  The module $R_5$ sits in a short exact sequence of $\mathcal{A}_1$-modules  (pictured in \fullref{fig:R5}): 
\[
0 \to J  \to R_5 \to \Sigma M_\infty  \to 0 .
\]
\fullref{fig:R0achart} gives $\Ext^{\ast, \ast}_{\A_1}(R_0, \Z/2 )$ and \fullref{fig:R5achart} gives $\Ext_{\A_1}^{\ast, \ast}(R_5, \Z/2 )$.
The Adams chart for $\Ext_{\A_1}^{s,t}(H^*(\Sigma^{-3}MTO_3), \Z/2 )$ is depicted in \fullref{fig:ASSMTO3} in the range of interest. The spectral sequence is too sparse for differentials and exotic extensions and the homotopy groups of $MTG^{-}$ are
      \begin{align*}
        \pi_0 MTG^{-} &= \Z/2 \\
        \pi_1 MTG^{-} &= 0 \\
        \pi_2 MTG^{-} &= \Z/2 \\
        \pi_3 MTG^{-} &= 0 \\
        \pi_4 MTG^{-} &= (\Z/2)^3.
      \end{align*}

\begin{figure}[!htb]
\centering
\begin{minipage}{.4\textwidth}
\centering \captionsetup{width=0.9\linewidth}
\begin{tikzpicture}[scale=.5]

\joker(13,5);
         
         \fill (16,6) circle (3pt) ;
             \sqone(16,6,black); 
             \sqtwoL(16,6,black);
               \fill (16,7) circle (3pt)  ;
                 \fill (16,8) circle (3pt)  ;
                   \sqone(16,8,black); 
                       \fill (16,9) circle (3pt) ;
                             \sqtwoL(16,9,black);
                            \fill (16,10) circle (3pt)  ;
                               \sqone(16,10,black); 
                               \sqtwoR(16,10,black);
                                   \fill (16,11) circle (3pt)   ;
                                    \fill (16,12) circle (3pt) ;
                                     \sqone(16,12,black); 

   \fill (18,5) circle (3pt) ;
   \sqtwoCL(18,5, black);
   \sqone(18,5,black);
      \fill (18,6) circle (3pt) ;
         \sqtwoR(18,6, black);
           \fill (18,8) circle (3pt) ;
              \sqone(18,8,black);
                \fill (18,9) circle (3pt) ;
                
                   \sqtwoCR(16,7, black);

    \foreach \y in {0, 2, 3, 4, 5, 6, 7, 8,9}
           {\fill (21, \y+6) circle (3pt);}
           \sqtwoL (21, 6,black);
           \sqtwoL (21, 9, black);
           \sqtwoR (21, 10, black);
           \sqtwoR (21, 13, black);
           \sqone (21, 12, black);
           \sqone (21,10, black);
                   \sqone (21,8, black);
           \sqone (21,12, black); 
             \sqone (21,14, black); 

              \draw[blue,->] (13, 5) .. controls (15.5, 4.5) ..  (18, 5);      
                \draw[blue,->] (13, 6) .. controls (15.5, 5.5) ..  (18, 6);  
                 \draw[blue,->] (13, 8) .. controls (15.5, 7.5) ..  (18, 8);  
                 \draw[blue,->] (13, 9) .. controls (15.5, 8.5) ..  (18, 9);  
                 \draw[blue,->] (13, 7) .. controls (14.5, 6.75) ..  (16, 7);        
                 
                 \node[anchor=north] at (13, 4.5) {$J$};   
              \node[anchor=north] at (16, 4.5) {$R_5$};   
                \node[anchor=north] at (21, 4.5) {$M_{\infty}$};   
\end{tikzpicture}
                  \caption{An $\A_1$-module we call $R_5$.}
                  \label{fig:R5}
\end{minipage}
\begin{minipage}{.55\textwidth}
\centering
\centering \captionsetup{width=0.9\linewidth}

\begin{tikzpicture}[scale=.5] 
  \draw[step=1cm,gray,very thin] (0,0) grid (7,8);
  \foreach \y in {0, 1, 2, 3, 4, 5, 6, 7}
           {\fill[green] (1.5, \y+.5) circle (3pt);
             \fill[green] (5.5, \y+.5) circle (3pt);}
           \foreach \y in {0, 1, 2, 3, 4, 5, 6}
                    {\draw[green] (1.5, \y+.5) -- (1.5, \y+1.5);
                      \draw[green] (5.5, \y+.5) -- (5.5, \y+1.5);}
                    \draw[green] (1.5, 7.5) -- (1.5, 8);
                    \draw[green] (5.5, 7.5) -- (5.5, 8);

                    \fill[blue] (0.5,0.5) circle (3pt);
                    \foreach \y in {1, 2, 3, 4, 5, 6, 7}
                             {\fill[blue] (2.5, \y+.5) circle (3pt); }
                                                 \foreach \y in {1, 2, 3, 4, 5, 6}
                             {\draw[blue] (2.5, \y+.5) -- (2.5, \y+1.5);}
                             \draw[blue](2.5, 7.5)--(2.5, 8);
                          \foreach \y in {2, 3, 4, 5, 6, 7}
                             {\fill[blue] (6.5, \y+.5) circle (3pt); }
                                                 \foreach \y in {2, 3, 4, 5, 6}
                             {\draw[blue] (6.5, \y+.5) -- (6.5, \y+1.5);}
                             \draw[blue](6.5, 7.5)--(6.5, 8);      
                                  \draw[blue](6.5, 2.5)--(7, 3);      
                                             
      \foreach \x in {1,2,3, 4, 5, 6}
                   {\draw[red] (2.5, \x+.5) -- (1.5, \x+1.5);}
                                        \draw[red] (2.5, 7.5) -- (2, 8);    
                                        
                                              \foreach \x in {2,3, 4, 5, 6}
                   {\draw[red] (6.5, \x+.5) -- (5.5, \x+1.5);}
                                        \draw[red] (6.5, 7.5) -- (6, 8);    
\draw (0.5, 0.5) circle (5pt);
\draw (1.5, 0.5) circle (5pt);
\draw (1.5, 1.5) circle (5pt); 
\draw (5.5, 0.5) circle (5pt); 
\draw (5.5, 1.5) circle (5pt); 
\draw (5.5, 2.5) circle (5pt);                                      
\end{tikzpicture}
                  \caption{The computation of the Adams chart for $\Ext_{\A_1}^{s,t}(R_5, \Z/2 )$ using the exact sequence of \fullref{fig:R5}. The Adams chart for $J$ is given in \fullref{fig:R2JQachart} and $M_{\infty}$ is discussed in \fullref{ex:M1}.} 
                  \label{fig:R5achart}
\end{minipage}

\bigskip
      \center
  \begin{tikzpicture}[scale=.5]
    \draw[step=1cm,gray,very thin] (0,0) grid (6,3);
    \fill (0.5, 0.5) circle (3pt);
    \fill (2.5, 0.5) circle (3pt);
    \fill (4.25,0.5) circle (3pt);
    \fill (4.75,0.5) circle (3pt);
    \fill (4.5, 1.5) circle (3pt); 
      \fill (5.5, 2.5) circle (3pt); 
      \draw  (4.5, 1.5) -- (5.5, 2.5);
      
         \fill (5.5, 0.5) circle (3pt); 
  \end{tikzpicture}
  \caption{The Adams chart for $\Ext_{\A_1}^{s,t}(H^*(\Sigma^{3}MTO_3), \Z/2 )$.}
    \label{fig:ASSMTO3}
\end{figure}

\subsection{The case $s=4$}
This is the case of $H=G^{0}= \Spin \times_{\{\pm 1\}} SU_2$ and in this case,
\[MTG^{0}  \simeq \MSpin \smsh \Sigma^{-3} MSO_3.\]
The $\A_1 $-structure of $H^\ast (MSO_3)$ is depicted in \fullref{fig:HMSO3}, and
\[ H^\ast (\Sigma^{-3} MSO_3) \approx Q \oplus \Sigma^4 R_2 . \]
The Adams chart for the modules $Q$ and $R_2$ are depicted in \fullref{fig:R2JQachart}, and the Adams chart for $\Ext_{\A_1}^{s,t}(H^\ast (\Sigma^{-3} MSO_3), \Z/2 )$ is in \fullref{fig:ASSMSO3}. The spectral sequence is too sparse for differentials and exotic extensions and the homotopy groups of $MTG^0$ are
      \begin{align*}
        \pi_0 MTG^0 &= \Z\\
        \pi_1 MTG^0 &= 0\\
        \pi_2 MTG^0 &= 0\\
        \pi_3 MTG^0 &= 0\\
        \pi_4 MTG^0 &= \Z^2 .
      \end{align*}
      
            \begin{figure}[ht]
      \center
      \begin{tikzpicture}[scale=.5] 
        \draw[step=1cm,gray,very thin] (0,0) grid (6,5);
            \foreach \y in {0, 1, 2, 3, 4}
             {\fill (0.5, 0.5+\y) circle (3pt);}
             \foreach \y in {0, 1, 2, 3}
                      {\draw (0.5, 0.5+\y) -- (0.5, 1.5+\y);}
                                  \foreach \y in {0, 1, 2, 3, 4}
             {\fill (4.25, 0.5+\y) circle (3pt);}
             \foreach \y in {0, 1, 2, 3}
                      {\draw (4.25, 0.5+\y) -- (4.25, 1.5+\y);}
                                 \foreach \y in {0, 1, 2, 3}
             {\fill (4.75, 1.5+\y) circle (3pt);}
             \foreach \y in {0, 1, 2}
             {\draw (4.75, 1.5+\y) -- (4.75, 2.5+\y);}
             
               \draw (0.5,4.5) -- (0.5,5);
                  \draw (4.25,4.5) -- (4.25,5);
                   \draw (4.75,4.5) -- (4.75,5);
                   
                   \fill (5.75, 2.5) circle (3pt);
                    \draw (4.75,1.5) -- (5.75,2.5);
                    \draw (5.75,2.5) -- (6, 2.75) ;
                    
                      \fill (5.5, 0.5) circle (3pt);
                       \draw (5.5,0.5) -- (6, 1) ;
      \end{tikzpicture}
      \caption{The Adams chart for $\Ext_{\A_1}^{s,t}(H^\ast (\Sigma^{-3} MSO_3), \Z/2 )$.}
            \label{fig:ASSMSO3}
      \end{figure}

\subsection{The complex case $s=0$} 
This is the case of $H^c=\Spin^c$ and in this case,
\[MTH^c(0)  \simeq \MSpin \smsh \Sigma^{-2}  MU_1 . \]
The structure of $H^\ast (MU_1)$ as an $\mathcal{A}_1$-module is depicted in \fullref{fig:MO1MU1}.
It is given by shifted sums of $\A_1  /\!\!/  \E_1$, so
\[
H^\ast ( \Sigma^{-2} MU_1) \approx \A_1  /\!\!/  \E_1 \oplus \Sigma^4 \A_1  /\!\!/  \E_1
\]
In \fullref{ex:coneeta}, we calculated that
\[
\Ext_{\mathcal{A}_1}^{*,*} (\A_1  /\!\!/  \E_1, \Z/2 ) \cong \Ext_{\E_1}^{*,*} (\Z/2 , \Z/2 )  \cong \Z/2 [h_0, v_1]
\]
for $v_1$ in degree $(s,t)=(1, 3)$. The $E_2$-page of the Adams spectral sequence for $\pi_\ast MTH^c(0)$ is depicted in \fullref{fig:ASSMU1}. The spectral sequence is too sparse for differentials and exotic extensions and the homotopy groups of $MTH^c(0)$ are
\begin{align*}
  \pi_0 MTH^c(0) &= \Z \\
  \pi_1 MTH^c(0) &= 0 \\
  \pi_2 MTH^c (0) &= \Z \\
  \pi_3 MTH^c (0) &= 0\\
  \pi_4 MTH^c (0) &=(\Z)^2 
\end{align*}

\begin{figure}[ht]
\center
    \begin{tikzpicture}[scale=.5]
          \draw[step=1cm,gray,very thin] (0,0) grid (6,5);
    \foreach \y in {0, 1,2,3,4}
             {\fill (.5,\y+.5) circle (3pt) ; }
                 \foreach \y in {0, 1,2,3}
             {\draw (.5,\y+.5)--(0.5,\y+1.5);}
                \draw (0.5,4.5) -- (0.5,5);
                
    \foreach \y in {1,2,3,4}
             {\fill (2.5,\y+.5) circle (3pt);}
                 \foreach \y in {1,2,3}
             {\draw (2.5,\y+.5)--(2.5,\y+1.5);}
               \draw (2.5,4.5) -- (2.5,5);
               
    \foreach \y in {1,2,3}
             {\fill (4.25,\y+1.5) circle (3pt);}
                 \foreach \y in {1,2}
             { \draw (4.25,\y+1.5)--(4.25,\y+2.5);}
                 \draw (4.25,4.5) -- (4.25,5);

               \foreach \y in {1,2,3,4,5}
             {\fill (4.75,\y-.5) circle (3pt);} 
                   \foreach \y in {1,2,3,4}
              {\draw (4.75,\y-.5)--(4.75,\y+.5);}
               \draw (4.75,4.5) -- (4.75,5);

    \end{tikzpicture}
    \caption{The Adams chart for $\Ext_{\A_1}^{s,t}(H^*(MU_1), \Z/2 )$.}
    \label{fig:ASSMU1}
\end{figure}

\subsection{The complex case $s=1$} 
This is the case of $H^c=\Pin^c$ and in this case,
\[MTH^c(1)  \simeq  \MSpin \smsh  \Sigma^{-3} MU_1\smsh  MO_1 . \]
The structure of $H^*(MU_1 \smsh MO_1)$ is depicted in \fullref{fig:MU1smshMO1}. We have
\[ H^*(\Sigma^{-3}MU_1 \smsh MO_1) \approx R_6 \oplus \Sigma^4 R_6   \]
for the module $R_6$ depicted in \fullref{fig:R6}. In order to compute the $E_2$-page of the Adams spectral sequence for $\pi_\ast MTH^c(1)$ we need to compute $\Ext^{\ast, \ast}_{\mathcal{A}_1} (R_6, \Z/2 )$. The module $R_6$ sits in a short exact sequence of $\mathcal{A}_1$-modules (pictured in \fullref{fig:R6}): 
\[
0 \to \Sigma R_1 \to R_6 \to M_\infty  \to 0
\]
and $\Ext^{\ast, \ast}_{\A_1} (R_6, \Z/2 )$ is computed in \fullref{fig:R6achart}.
The $E_2$-page of the Adams spectral sequence for $\pi_\ast MTH^c(1)$ is depicted in \fullref{fig:ASSMU1MO1}. The spectral sequence is too sparse for differentials and exotic extensions and the homotopy groups of $MTH^c(1)$ are
\begin{align*}
  \pi_0 MTH^c(1) &= \Z/2 \\
  \pi_1 MTH^c(1) &= 0 \\
  \pi_2 MTH^c(1) &= \Z/4 \\
  \pi_3 MTH^c(1) &= 0 \\
  \pi_4 MTH^c(1) &= \Z/2 \times \Z/8 .
  \end{align*}

\begin{figure}[ht]
\centering
\begin{minipage}{.4\textwidth}
\centering \captionsetup{width=0.9\linewidth}
\begin{tikzpicture}[scale=.5]
  \foreach \y in {0, 1, 2, 3, 4, 5, 6, 7,8} 
         {\fill (0, \y+1) circle (3pt);}
         \sqtwoL(0, 1, black);
         \sqtwoR(0, 2, black);
         \sqone (0, 2, black);
         \sqone (0, 4, black);
         \sqtwoR (0, 5, black);
         \sqone (0, 6, black);
         \sqtwoL (0, 6, black);
         \sqone (0, 8, black); 
         \foreach \y in {0,1, 2, 3, 4, 5, 6, 7, 8, 9}
                  {\fill (3, \y) circle (3pt);}
                  \sqtwoL(3, 0, black);
                  \sqone (3, 0, black); 
                  \sqone (3, 2, black);
                  \sqtwoL(3, 3, black);
                  \sqone (3, 4, black);
                  \sqtwoR (3, 4, black);
                  \sqone (3, 6, black);
                  \sqtwoR (3, 7, black);
                  \sqone (3, 8, black); 
                    \foreach \y in {0, 1, 2, 3, 4, 5, 6, 7} 
         {\fill (5, \y+2) circle (3pt);}
         \sqtwoCR (3, 1, black);
         \sqone (5, 2, black);
         \sqtwoR (5, 2, black);
         \sqone (5, 4, black);
         \sqtwoR (5, 5, black);
         \sqone (5, 6, black);
         \sqtwoL (5, 6, black);
         \sqone (5, 8, black);
                  \foreach \y in {0, 2, 3, 4, 5, 6, 7, 8, 9}
                  {\fill (8, \y) circle (3pt);}
                  \sqtwoL(8, 0, black);
                  \sqone (8, 2, black);
                  \sqtwoL(8, 3, black);
                  \sqone (8, 4, black);
                  \sqtwoR (8, 4, black);
                  \sqone (8, 6, black);
                  \sqtwoR (8, 7, black);
                  \sqone (8, 8, black);
                  \draw[blue,->] (0, 1) .. controls (1.5, 0.5) ..  (3, 1);
                  \draw[blue,->] (0, 2) .. controls (2.5, 1.5) .. (5, 2);
                  \draw[blue,->] (0, 3) .. controls (2.5, 2.5) .. (5, 3);
                  \draw[blue,->] (0, 4) .. controls (2.5, 3.5) ..  (5, 4);
                  \draw[blue,->] (0, 5) .. controls (2.5, 4.5) ..  (5, 5);
                  \draw[blue,->] (0, 6) .. controls (2.5, 5.5) ..  (5, 6);
                  \draw[blue,->] (0, 7) .. controls (2.5, 6.5) ..  (5, 7);
                  \draw[blue,->] (0, 8) .. controls (2.5, 7.5) ..  (5, 8);
                  \draw[blue,->] (0, 9) .. controls (2.5, 8.5) ..  (5, 9);         
\end{tikzpicture}
                  \caption{The exact sequence for $R_6$.}
                  \label{fig:R6}
\end{minipage}
\begin{minipage}{.55\textwidth}
\centering
\centering \captionsetup{width=0.9\linewidth}

\begin{tikzpicture}[scale=.5] 
  \draw[step=1cm,gray,very thin] (0,0) grid (6,8);
  \foreach \y in {0, 1, 2, 3, 4, 5, 6, 7}
           {\fill[green] (0.5, \y+.5) circle (3pt);
             \fill[green] (4.5, \y+.5) circle (3pt);}
           \foreach \y in {0, 1, 2, 3, 4, 5, 6}
                    {\draw[green] (0.5, \y+.5) -- (0.5, \y+1.5);
                      \draw[green] (4.5, \y+.5) -- (4.5, \y+1.5);}
                    \draw[green] (0.5, 7.5) -- (0.5, 8);
                    \draw[green] (4.5, 7.5) -- (4.5, 8);
                    \foreach \y in {0, 1, 2, 3, 4, 5, 6, 7}
                             {\fill[blue] (1.5, \y+.5) circle (3pt); }
                             \fill[blue] (2.5, .5) circle (3pt);
                             \fill[blue] (2.5, 1.5) circle (3pt);
                    \foreach \y in {2, 3, 4, 5, 6, 7}
                             {\fill[blue] (5.5, \y+.5) circle (3pt); }
                    \foreach \y in {0, 1, 2, 3, 4, 5, 6}
                             {\draw[blue] (1.5, \y+.5) -- (1.5, \y+1.5);}
                             \draw[blue](1.5, 7.5)--(1.5, 8);
                             \draw[blue] (2.5, .5) -- (2.5, 1.5);
                    \foreach \y in {2,3, 4, 5, 6}
                             {\draw[blue] (5.5, \y+.5) -- (5.5, \y+1.5);}
                             \draw[blue] (5.5,7.5)--(5.5,8);
                     \foreach \x in {0, 1, 2, 3, 4, 5, 6}
                                      {\draw[red] (1.5, \x+.5) -- (.5, \x+1.5);}
                      \foreach \x in {2,3, 4, 5, 6}
                      {\draw[red] (5.5, \x+.5) -- (4.5, \x+1.5);}
                      \draw[red] (1.5, 7.5) -- (1, 8);
                      \draw[red] (5.5, 7.5) -- (5, 8);
                      \draw (.5,.5) circle (5pt);
                      \draw (2.5, .5) circle (5pt);
                      \draw (2.5, 1.5) circle (5pt);
                      \draw (4.5, .5) circle (5pt);
                      \draw (4.5, 1.5) circle (5pt);
                      \draw (4.5, 2.5) circle (5pt);

\end{tikzpicture}
                  \caption{The computation of the Adams chart for $\Ext_{\A_1}^{s,t}(R_6, \Z/2 )$ using the exact sequence of \fullref{fig:R6}. The Adams chart for $R_1$ is given in \fullref{fig:R1achart} and $M_{\infty}$ is discussed in \fullref{ex:M1}.}
                  \label{fig:R6achart}
\end{minipage}
\bigskip

\begin{tikzpicture}[scale=.5] 
  \draw[step=1cm,gray,very thin] (0,0) grid (5,3);
  \fill (0.5, 0.5) circle (3pt);
  \fill (2.5, 0.5) circle (3pt);
  \fill (2.5, 1.5) circle (3pt);
  \sqone (2.5, 0.5, black);
  \fill (4.75, 0.5) circle (3pt);
  \fill (4.75, 1.5) circle (3pt);
  \fill (4.75, 2.5) circle (3pt);
  \sqone (4.75, .5, black);
  \sqone (4.75, 1.5, black);
  \fill (4.25, .5) circle (3pt);  
\end{tikzpicture}
\caption{The Adams chart for $\Ext_{\A_1}^{s,t}(H^*(MU_1\smsh MO_1), \Z/2 )$.}
\label{fig:ASSMU1MO1}

\end{figure}

\bibliographystyle{amsalpha}
\providecommand{\bysame}{\leavevmode\hbox to3em{\hrulefill}\thinspace}
\providecommand{\MR}{\relax\ifhmode\unskip\space\fi MR }
\providecommand{\MRhref}[2]{%
  \href{http://www.ams.org/mathscinet-getitem?mr=#1}{#2}
}
\providecommand{\href}[2]{#2}




\end{document}